%
%
%
%
%
%
%
%
%
%
%
%

\documentclass[11pt]{amsart}
\usepackage{pstricks,pst-plot}

\newtheorem{theorem}{Theorem}[section]
\newtheorem{lemma}[theorem]{Lemma}
\newtheorem{corollary}[theorem]{Corollary}

\theoremstyle{definition}
\newtheorem{definition}[theorem]{Definition}
\newtheorem{example}[theorem]{Example}

\newtheorem{exercise}[theorem]{Exercise}

\newcommand{\bea}{\begin{eqnarray*}}
\newcommand{\eea}{\end{eqnarray*}}

\numberwithin{equation}{section}

%
%
%
%

\begin{document}
\title[Lectures]{INTRODUCTION TO COMPLEX DYNAMICS IN ONE DIMENSION}

%
%
\subjclass[2010]{}
\date{\today}
\keywords{}

\maketitle

\centerline{\bf JOHN ERIK FORN\AE SS}

\bigskip

\centerline{\bf LECTURES YMSC JULY 16-August 8, 2013}

\bigskip

\centerline{YMSC, Yau Mathematical Sciences Center, Tsinghua University, Beijing}

\tableofcontents


\section{BASIC CONCEPTS }

\bigskip

The purpose of this course is to give an elementary introduction to complex dynamics in one dimension.
Much of the material comes from the textbook by John Milnor, [M]. For material about the equilibrium measure we refer to  the articles [FLM], [L]. Their work was generalized to higher dimension by Jean-Yves Briend and Julien Duval [BD]. We follow [BD] by restricting their presentation back to one dimension. This will be convenient for somebody who wants to continue into higher dimensional complex dynamics.

To start,  let $P(z)$ be a complex polynomial of degree $d \geq 2.$ Set $P^2(z)=P(P(z)), P^3(z)=P(P(P(z))), P^{n+1}(z)=P(P^n(z))$. We will investigate the behaviour of the sequence $\{P^n(z)\}$ as $n \rightarrow \infty.$\\

The plan of the course is the following:\\

\noindent Part 1: Introduction and basic concepts\\
Part 2: We will discuss invariant measures $\mu$. $\mu$ is a probability measure
and $\mu(P^{-1}(E))=\mu(E).$\\
Part 3: We will investigate some features of the Fatou set.\\

PART 1. Basics of Complex Dynamics in one variable.\\

\begin{definition}
$z_0$ is a fixed point if $P(z_0)=z_0.$
\end{definition}

 We classify fixed points according to the derivative
$\lambda:= P'(z_0).$\\
Attracting: $|\lambda|<1.$ If $\lambda=0$ it is called superattracting.\\
Repelling: $|\lambda|>1.$ \\
Neutral: $|\lambda|=1.$\\
The neutral case can be divided further: $\lambda^n=1$ for some positive integer $n$, is called Rationally neutral. If $\lambda^n \neq 1$ for all $n\geq 1$ is called irrationally neutral.

$z_0$ attracting:\\

Let $P(z)=P(z_0)+\lambda (z-z_0)+ \mathcal O(z-z_0)^2$. Assume that $z_0=P(z_0)=0.$ Then let $|\lambda|<\rho<1.$ If $|z|<\delta$, then $|\lambda (z-z_0)+ \mathcal O(z-z_0)^2|\leq \rho |z|$. Hence $|P|
\leq \rho |z-z_0|.$ Inductively, $|P^n(z)|\leq \rho^n|z|.$ Hence $P^n(z) \rightarrow z_0$ if $|z-z_0|<\delta.$

Let $U:=\{z\in \mathbb C; P^n(z) \rightarrow z_0\}.$ This is called the basin of attraction of $z_0.$
Then $U$ is an open set.
\
The immediate basin of attraction is the connected component of $U$ which contains $z_0.$\\

Conjugation: \\

Let $\phi$ be a 1-1 analytic function defined in a neighborhood of $z_0$, $\phi(z_0)=0$.
We say that $Q$ is conjugate to $P$ if $\phi\circ P=Q\circ \phi.$ If $Q$ is simpler than $P$, then it is easier to describe  the dynamics near $z_0$ using $Q$ instead of $P.$
We have that $Q=\phi\circ P\circ \phi^{-1}$ and
$$Q^n=(\phi\circ P\circ \phi^{-1})\circ (\phi\circ P\circ \phi^{-1})
\circ \cdots \circ (\phi\circ P\circ \phi^{-1})=\phi
\circ P^n\circ \phi^{-1}.$$

\begin{theorem} (Koenigs)
If $0<|\lambda|<1$, then we can find $\phi$ so that $Q=\lambda z.$
\end{theorem}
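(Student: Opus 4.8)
The plan is to construct $\phi$ as the limit of the rescaled iterates $\phi_n(z) := P^n(z)/\lambda^n$. The motivation is that if a conjugacy $\phi$ with $\phi \circ P = \lambda \phi$ exists, then $\phi \circ P^n = \lambda^n \phi$, so $\phi(z) = \phi(P^n(z))/\lambda^n$; since $P^n(z) \to 0$ and $\phi(0)=0$, $\phi'(0)=1$, we expect $\phi(P^n(z)) \approx P^n(z)$ for large $n$, which suggests $\phi = \lim_n P^n/\lambda^n$. First I would fix $\rho$ with $|\lambda| < \rho < 1$ and choose $\delta > 0$ as in the attracting-fixed-point discussion above, so that on $\Delta := \{|z| < \delta\}$ we have $|P^n(z)| \le \rho^n |z|$. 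I would also record the quantitative estimate $|P(z) - \lambda z| \le C|z|^2$ on $\Delta$, which follows from $P(z) = \lambda z + \mathcal{O}(z^2)$.

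The key step is to show that $\phi_n$ converges uniformly on $\Delta$ (after possibly shrinking $\delta$). I would estimate the difference of consecutive terms:
\[
\phi_{n+1}(z) - \phi_n(z) = \frac{P^{n+1}(z) - \lambda P^n(z)}{\lambda^{n+1}} = \frac{P(P^n(z)) - \lambda P^n(z)}{\lambda^{n+1}}.
\]
Applying $|P(w) - \lambda w| \le C|w|^2$ with $w = P^n(z)$ and $|w| \le \rho^n|z| \le \rho^n \delta$ gives
\[
|\phi_{n+1}(z) - \phi_n(z)| \le \frac{C\,\rho^{2n}\delta^2}{|\lambda|^{n+1}} = \frac{C\delta^2}{|\lambda|}\left(\frac{\rho^2}{|\lambda|}\right)^{n}.
\]
Here is the one subtlety to watch: $\rho^2/|\lambda|$ need not be less than $1$ merely from $|\lambda| < \rho < 1$. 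To fix this I would instead choose $\rho$ at the outset so that $|\lambda| < \rho^2$, i.e. $\sqrt{|\lambda|} < \rho < 1$, which is possible since $|\lambda| < 1$; then $q := \rho^2/|\lambda| < 1$ and the series $\sum_n |\phi_{n+1} - \phi_n|$ is dominated by a convergent geometric series. Hence $\phi_n \to \phi$ uniformly on $\Delta$, so $\phi$ is holomorphic there, and term-by-term inspection gives $\phi(0) = 0$ and $\phi'(0) = \lim \phi_n'(0) = \lim (\lambda^n/\lambda^n) = 1$, so $\phi$ is a local biholomorphism near $0$.

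Finally I would verify the functional equation: from $\phi_n(P(z)) = P^{n+1}(z)/\lambda^n = \lambda \cdot \phi_{n+1}(z)$, passing to the limit gives $\phi(P(z)) = \lambda\,\phi(z)$, which is exactly $\phi \circ P = Q \circ \phi$ with $Q(z) = \lambda z$. Uniqueness (if desired) follows by noting any conjugacy tangent to the identity must agree with this $\phi$ by the limit formula above. I expect the main obstacle to be precisely the bookkeeping of constants that forces the choice $\sqrt{|\lambda|} < \rho < 1$ rather than the naive $|\lambda| < \rho < 1$; everything else is a routine telescoping-sum-plus-Weierstrass argument. The hypothesis $\lambda \neq 0$ is used in dividing by $\lambda^n$, and $|\lambda| < 1$ is used to make $P^n(z) \to 0$ so that the quadratic error terms decay fast enough.
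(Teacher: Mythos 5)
Your construction is exactly the one the paper uses ($\phi_n = P^n/\lambda^n$, telescoping differences, functional equation by passing to the limit in $\phi_n\circ P=\lambda\phi_{n+1}$), but the one step you yourself flag as the crux is resolved with the inequality inverted, and as written the convergence argument fails precisely there. You need $q:=\rho^2/|\lambda|<1$, which means $\rho^2<|\lambda|$, i.e.\ $\rho<\sqrt{|\lambda|}$; yet you propose to choose $\rho$ so that $|\lambda|<\rho^2$, i.e.\ $\sqrt{|\lambda|}<\rho<1$, and this gives $q>1$. With that choice the bound $|\phi_{n+1}-\phi_n|\le \frac{C\delta^2}{|\lambda|}\,q^{\,n}$ is dominated by a divergent geometric series, so uniform convergence of $\phi_n$ is not established. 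The correct choice is $|\lambda|<\rho<\sqrt{|\lambda|}$ (take $\rho$ close to $|\lambda|$, not close to $1$), a nonempty interval precisely because $0<|\lambda|<1$ forces $|\lambda|<\sqrt{|\lambda|}$; the attracting-fixed-point estimate $|P^n(z)|\le\rho^n|z|$ is available for any such $\rho$ after shrinking $\delta$, and then your telescoping estimate is dominated by a convergent geometric series and the rest of your argument (Weierstrass convergence, $\phi(0)=0$, $\phi'(0)=1$, hence local invertibility, and $\phi\circ P=\lambda\phi$ in the limit) goes through.

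Once this sign is fixed, your proof coincides with the paper's: there one writes $\rho=|\lambda|+C\delta$ and shrinks $\delta$ so that $(|\lambda|+C\delta)^2/|\lambda|<1$, which is exactly the condition $\rho<\sqrt{|\lambda|}$ phrased in terms of $\delta$. The remaining ingredients in your write-up, namely the identity $\phi_{n+1}-\phi_n=\bigl(P(P^n(z))-\lambda P^n(z)\bigr)/\lambda^{n+1}$, the quadratic bound $|P(w)-\lambda w|\le C|w|^2$ applied at $w=P^n(z)$, and the derivation of the conjugacy from $\phi_n\circ P=\lambda\phi_{n+1}$, are the same as in the paper, so no separate comparison of methods is needed.
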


\begin{proof}
We can suppose that $z_0=0.$ We will define $\phi=\lim_n \phi_n$ where
$\phi_n=P^n/\lambda^n.$ Note that if $\phi_n$ converges to $\phi$, then $\phi'(0)=1$, since
$\phi'_n(0)=1$ for all $n.$ 
We have that $$\phi_n\circ P=P^{n+1}/\lambda^n= \lambda P^{n+1}/\lambda^{n+1}=\lambda \phi_{n+1}.$$ When $n\rightarrow \infty$, we get $\phi(P)= \lambda \phi.$

It remains to show that $\{\phi_n\}$ converges on some neighborhood of $0.$ We show convergence in some neighborhood of $0.$ $|P(z)-\lambda z|\leq C|z|^2$, $C$ large, $|z
|<\delta.$  Hence $$|P(z)|\leq |\lambda||z|+ C|z|^2\leq |\lambda||z|+C|z|\delta\leq
(|\lambda|+C\delta)|z|.$$
We can assume that $|\lambda|+C\delta<\rho<1.$ Hence $|P^n(z)|\leq \rho^n |z|$

\bea
|\phi_{n+1}-\phi_n| & = & |P^{n+1}/\lambda^{n+1}-P^n/\lambda^n|\\
& = & |(P(P^n(z))-\lambda P^n(z))/\lambda^{n+1}  |\\
& \leq & C|P^n(z)|^2/|\lambda|^{n+1}\\
\eea

$$
|\phi_{n+1}-\phi_n|\leq C\left(\frac{(|\lambda|+C\delta)^2}{|\lambda|}\right)^n\rightarrow 0
$$

[We shrink $\delta$ so that $\left(\frac{|\lambda|+C\delta)^2}{|\lambda|}\right)<1.$]

\end{proof}

Next we deal with the case when $P'(z_0)=0.$ Assume $z_0=0.$ Then $P=a_pz^p+a_{p+1}z^{p+1}+\cdots, p\geq 2, a_p \neq 0.$ [In this case we allow $P$ to be any analytic function, not necesssarily a polynomial.]
In this case, we have a theorem by Boettcher.

\begin{theorem}
There is a local biholomorphism $w=\phi(z),\phi(0)=0$, conjugating $P$ to the map $Q(w)=w^p.$
($w$ is called a Boettcher coordinate)
\end{theorem}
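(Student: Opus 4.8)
The plan is to run the same scheme as in the proof of Koenigs' theorem, replacing the iterates $P^n/\lambda^n$ by a suitably chosen $p^n$-th root of $P^n$. The first step is a normalization. Conjugating by the linear map $\psi(z)=cz$ replaces $P$ by $a_p c^{1-p}w^p+\cdots$, so choosing $c$ with $c^{p-1}=a_p$ we may assume $P(z)=z^p u(z)$ with $u$ holomorphic near $0$ and $u(0)=1$. Pick $\delta>0$ so small that $u$ maps $\{|z|<\delta\}$ into the disk $|w-1|<\tfrac12$; then $\log u$ is well defined there (with $\log u(0)=0$), hence so is $u(z)^{1/p^{k}}:=\exp\big(\tfrac{1}{p^{k}}\log u(z)\big)$ for every $k\ge 0$. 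Shrinking $\delta$ further so that $2\delta^{p-1}<1$, we get $|P(z)|\le 2|z|^{p}\le |z|$ on the disk, so $P$ maps the disk into itself and $P^{n}(z)\to 0$ uniformly, in fact geometrically fast.

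Next I would define the approximating maps $\phi_n(z):=z\prod_{k=0}^{n-1}u\big(P^{k}(z)\big)^{1/p^{k+1}}$ on $\{|z|<\delta\}$. Using $P^{n+1}(z)=\big(P^{n}(z)\big)^{p}u\big(P^{n}(z)\big)$ one checks by induction that $\phi_n(z)^{p^{n}}=P^{n}(z)$, so $\phi_n$ is exactly the branch of $\big(P^{n}\big)^{1/p^{n}}$ that is tangent to the identity at $0$; in particular $\phi_n(0)=0$ and $\phi_n'(0)=1$. A second induction, or a direct rearrangement of the product, gives the key relation $\phi_n\big(P(z)\big)=\phi_{n+1}(z)^{p}$ on the disk, with matching branches and no ambiguous $p$-th root of unity, precisely because both sides have been written down explicitly.

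Then I would prove convergence. Since $P^{k}(z)\to 0$ geometrically and $u(0)=1$, each factor $u\big(P^{k}(z)\big)^{1/p^{k+1}}$ differs from $1$ by a quantity that is geometrically small in $k$ (the exponent $1/p^{k+1}$ only helps), so the infinite product $\prod_{k\ge 0}u\big(P^{k}(z)\big)^{1/p^{k+1}}$ converges uniformly on $\{|z|<\delta\}$; this is the analogue of the displayed estimates for $|\phi_{n+1}-\phi_n|$ in the Koenigs proof. Hence $\phi_n\to\phi$ uniformly, $\phi$ is holomorphic with $\phi(0)=0$ and $\phi'(0)=1$, so $\phi$ is a biholomorphism of a neighborhood of $0$ onto its image. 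Letting $n\to\infty$ in $\phi_n\big(P(z)\big)=\phi_{n+1}(z)^{p}$ gives $\phi\big(P(z)\big)=\phi(z)^{p}$, i.e. $Q=\phi\circ P\circ\phi^{-1}$ is $w\mapsto w^{p}$.

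The main obstacle I anticipate is the bookkeeping with the multivalued roots rather than any hard estimate: one must be sure that all the roots $u\big(P^{k}(z)\big)^{1/p^{k+1}}$ can be taken on one fixed disk independent of $k$, and that the branch of $\big(P^{n}\big)^{1/p^{n}}$ selected by the product formula is compatible with iteration, so that the functional equation $\phi_n\big(P(z)\big)=\phi_{n+1}(z)^{p}$ holds exactly and not merely up to a root of unity. Both points are arranged once and for all by the normalization $u(0)=1$ together with the choice of a small $\delta$, after which the analytic estimates are routine.
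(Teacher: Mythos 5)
Your proposal is correct and follows essentially the same route as the paper: normalize so that $P(z)=z^p+\cdots$ via $c^{p-1}=a_p$, take $\phi_n$ to be the branch of $(P^n)^{1/p^n}$ tangent to the identity, verify $\phi_n\circ P=\phi_{n+1}^p$, and pass to the limit by showing an infinite product converges. Your explicit product $\phi_n(z)=z\prod_{k=0}^{n-1}u(P^k(z))^{1/p^{k+1}}$ is just a more explicit form of the paper's $P^n(z)=z^{p^n}e^{\alpha_n(z)}$ bookkeeping and telescoping ratio $\phi_{n+1}/\phi_n$, so the arguments coincide in substance.
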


\begin{proof}
If we conjugate first with $w'=cz$, we get the map
$w'\rightarrow z=w'/c \rightarrow a_p(w'/c)^p+\cdots \rightarrow ca_p(w'/c)^p+\cdots$\\

Choose $c$ so that $c^{p-1}=a_p$. Note that this is unique up to $d-1$ roots of unity.
Hence we may assume that $P(z)=z^p+...$ We can then write
$P(z)=z^p(1+\cdots)$ and therefore $P^{1/p}=z(1+\cdots)$. In fact $P(z)=z^pe^{\alpha_0(z)},$
$|z|<\delta.$ Here $\alpha_0(0)=0.$ We get
$$
P^2(z)=(z^pe^{\alpha_0(z)})^pe^{\alpha_0(z^pe^{\alpha_0(z)})}=
z^{p^2}e^{\alpha_1(z)}, |z|<\delta, \alpha_1(0)=0.$$
In general, $P^n(z)=z^{(p^n)}e^{\alpha_n(z)}, |z|<\delta, \alpha_n(0)=0.$

Define $\phi_n=(P^n)^{1/p^n}=z (1+\cdots).$ 

We get that
$$
\phi_n\circ P = (P^n)^{1/p^n}\circ P=(P^n\circ P)^{1/p^n}=
\left((P^{n+1})^{1/p^{n+1}}\right)^p= \phi_{n+1}^p.
$$

Suppose that $\phi_n\rightarrow \phi.$ Then $\phi\circ P=\phi^p.$ This gives the conjugation.
We show convergence:

\bea
\frac{\phi_{n+1}}{\phi_n} & = & \frac{(P^{n+1})^{1/p^{n+1}}}{(P^n)^{1/p^n}}\\
& = & \left(\frac{(P^{n+1})^{1/p}}{P^n}\right)^{1/p^n}\\
& = & \left(\frac{(P\circ P^{n})^{1/p}}{P^n}\right)^{1/p^n}\\
& = & \left(\frac{\phi_1\circ P^{n}}{P^n}\right)^{1/p^n}\\
& = & \left(\frac{P^n+(\mathcal O(P^{n}))^2}{P^n}\right)^{1/p^n}\\
& = & (1+\mathcal O(P^n))^{1/p^n}\\
\eea

On a small neighborhood of $0$, we have

\bea
|P(z)| & \leq & 2|z|^p\\
|P^2|(z) & \leq & 2 (2^p |z|^{p^2})\\
|P^n|(z) & \leq & 2^{p^n}|z|^{p^n}\\
\eea

So $\frac{\phi_{n+1}}{\phi_n} = 1+\mathcal O (1/p^n).$
Hence the product $\phi_n = \phi_1 \Pi_{j=1}^{n-1} \frac{\phi_{j+1}}{\phi_j}$ converges to some
limit $\phi.$

\end{proof}

{\bf Exercises}\\

\begin{exercise}
Let $P(z)=z^2.$ Show that $0$ is a superattracting fixed point. Find the basin of attraction and the immediate basin of attraction
\end{exercise}

\begin{exercise}
Let $P(z)=z^2+1.$ Use local coordinate $w=\phi(z)=1/z$ near $\infty.$ Show that $0$ is a
superattracting point for the conjugate map $Q(w).$
\end{exercise}

\begin{exercise}
Let $P_c(z)=z^2+c.$ Find the fixed points $z_f$ and the derivatives $P'(z_f).$ Show that $P_c$ always has a repelling fixed point if $c\neq 1/4.$
\end{exercise}

\newpage



Let $z$ be a point where $P'(z)=0.$ Then $z$ is called a critical point and $P(z)$ is called a critical value. There are at most $d-1$ finite critical points (in addition $\infty$ is a critical point).

\begin{theorem}
Let $z_0$ be an attracting fixed point. Then there is a critical point in the immediate basin of attraction.
\end{theorem}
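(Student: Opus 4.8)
The plan is to argue by contradiction: suppose the immediate basin $U_0$ of the attracting fixed point $z_0$ contains no critical point of $P$. First I would normalize so that $z_0 = 0$, and choose $0 < \rho < 1$ together with a small disk $D = \{|z| < \delta\}$ on which $|P(z)| \le \rho|z|$, so that $D \subset U_0$ and $P(D) \subset D$. The idea is then to build an increasing exhaustion of $U_0$ by enlarging $D$ under pullback by branches of $P^{-1}$. Concretely, set $V_0 = D$ and inductively let $V_{n+1}$ be the connected component of $P^{-1}(V_n)$ containing $z_0$; since $V_n$ is a disk, $z_0 \in V_n$, and (crucially) $V_n$ contains no critical value of $P$, the restriction $P \colon V_{n+1} \to V_n$ is a proper unbranched covering, hence — $V_n$ being simply connected — a biholomorphism. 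Thus each $V_n$ is a topological disk, and one checks $V_n \subset V_{n+1}$ and $\bigcup_n V_n = U_0$ (any point in $U_0$ lands in $D$ after finitely many iterations of $P$, and its backward orbit pulls back along these biholomorphisms).

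Now let $\psi_n \colon V_n \to D$ be the biholomorphism obtained by composing the inverses of $P\colon V_{k+1}\to V_k$; equivalently $\psi_n = P^n|_{V_n}$ viewed as a map onto $D$. Since the $V_n$ increase and each $\psi_n$ maps into the fixed bounded disk $D$, the family $\{\psi_n\}$ is uniformly bounded on compact subsets of $U_0$, hence normal. I would extract a subnormal limit $\psi \colon U_0 \to \overline D$; because $\psi_n(z_0) = 0$ and $\psi_n'(z_0) = \lambda^n \to 0$ while we also have the lower bound coming from Koenigs' coordinate near $0$ showing $\psi_n$ restricted to a fixed small disk converges to the Koenigs map (after rescaling), one sees $\psi$ is nonconstant. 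The functional relation $\psi_n(z) = \psi_{n-1}(P(z))$ passes to the limit, but more useful is that, since each $\psi_n$ is injective on $V_n$, Hurwitz's theorem forces the limit $\psi$ to be either injective or constant on $U_0$; having ruled out constant, $\psi$ is an injective holomorphic map $U_0 \hookrightarrow \overline D$, in fact into $D$ by the open mapping theorem.

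To reach a contradiction I would compare "sizes." Because $\psi \colon U_0 \to D$ is injective and $\psi(z_0) = 0$ with $\psi'(z_0) = \lim \lambda^n \cdot (\text{rescaling})$ — here one should instead track the \emph{inverse} branches: $\psi_n^{-1} \colon D \to V_n$ satisfies $(\psi_n^{-1})'(0) = \lambda^{-n} \to \infty$ in modulus. Since $V_n \subset U_0$ and the $\psi_n^{-1}$ are injective with derivative at $0$ blowing up, the Koebe one-quarter theorem (applied after normalizing $\psi_n^{-1}$ to the unit disk) implies $V_n$ contains a disk about $z_0$ of radius $\gtrsim |\lambda|^{-n}\delta/4 \to \infty$. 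Hence $U_0 \supset \bigcup_n V_n$ would contain arbitrarily large disks, so $U_0 = \C$. But then $P$ would be a polynomial all of whose iterates $P^n$ map $\C$ injectively onto the bounded disk $D$ — indeed $\psi_n = P^n$ would be a bounded nonconstant entire function, contradicting Liouville. (Equivalently: a nonconstant polynomial cannot be injective on all of $\C$ unless it has degree $1$, but $d \ge 2$.) This contradiction shows $U_0$ must contain a critical point.

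The main obstacle is the step asserting $P \colon V_{n+1} \to V_n$ is a biholomorphism: this is exactly where the hypothesis "no critical point in $U_0$" is consumed, and it requires knowing that a proper holomorphic map between disks with no branch points is a degree-one cover — one must verify properness of $P|_{V_{n+1}}$ (so that the degree is well defined and constant) and that the component $V_{n+1}$ is genuinely simply connected, which follows inductively since $V_n$ is and an unbranched cover of a disk is trivial. Managing the normal-families/Koebe estimate cleanly — in particular extracting a nonconstant limit and quantifying the growth of $V_n$ — is the other technical point, but it is routine once the covering structure is in hand.
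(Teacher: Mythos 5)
Your overall strategy is sound and genuinely different from the paper's: the paper rules out critical-point-free pullbacks by lifting the inverse branches to the unit disc via the universal cover of $\mathbb{C}\setminus\{0,1\}$ (a Montel-type derivative-growth contradiction) and then still has to dispose of the exceptional case $P(z)=az^d$, whereas you exhaust the immediate basin $U_0$ by pullback components $V_n$ on which $P^n$ is a biholomorphism onto $D$ and let the Koebe one-quarter theorem blow up the inverse branches. That skeleton works (and nicely avoids the paper's final case analysis, at the cost of using Koebe, which these notes only prove later). But two of your assertions are wrong, and the second is exactly where your contradiction is supposed to land. Minor one: the limit of $\psi_n=P^n|_{V_n}$ is \emph{not} nonconstant --- on the basin $P^n$ converges locally uniformly to the constant $z_0$, so the whole Hurwitz/injective-limit paragraph is vacuous; you in effect discard it when you switch to inverse branches, which is the correct object. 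Essential one: having shown $V_n$ contains a disc about $z_0$ of radius $\frac{1}{4}\delta|\lambda|^{-n}\to\infty$, hence $U_0=\mathbb{C}$, you conclude that ``$P^n$ maps $\mathbb{C}$ injectively onto the bounded disk $D$, contradicting Liouville.'' That is false: $\psi_n=P^n$ is injective and bounded only on $V_n$, and no single $V_n$ is $\mathbb{C}$ (each is biholomorphic to $D$); the union being $\mathbb{C}$ hands you no bounded entire function, and $P$ itself is of course not injective on $\mathbb{C}$, so neither version of your parenthetical closes the argument.

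The repair is immediate and should be stated: for $d\geq 2$ there is $R$ with $|P(z)|\geq 2|z|$ whenever $|z|>R$, so all points of large modulus tend to $\infty$ and the immediate basin of the finite attracting point $z_0$ is contained in $\{|z|\leq R\}$; this is incompatible with $U_0$ containing discs of radius $\frac{1}{4}\delta|\lambda|^{-n}\to\infty$, and the proof closes. Two smaller points to tighten: (i) your argument uses $\lambda^{-n}$, so say explicitly that under the contradiction hypothesis $\lambda\neq 0$, since a superattracting fixed point is itself a critical point lying in $U_0$; (ii) in proving $\bigcup_n V_n=U_0$, a point $z$ with $P^n(z)\in D$ a priori lies in some component of $P^{-n}(D)$, not obviously the one containing $z_0$ --- connect $z$ to $z_0$ by a compact path in $U_0$, use that $P^n\to z_0$ locally uniformly on $U_0$ to push the whole path into $D$, and conclude the path (hence $z$) lies in $V_n$. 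With these fixes your proof is complete and is a legitimate Koebe-based alternative to the paper's covering-space argument.
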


The proof of the theorem will use one of the most important tools in complex dynamics:

\begin{theorem}
The Universal Covering of $\mathbb C\setminus \{0,1\}$ is the unit disc,
$\Delta=\{z\in \mathbb C, |z|<1\}.$ So in other words, there is a holomorphic function
$\pi:\Delta \rightarrow \mathbb C \setminus \{0,1\}$ so that for every open disc $D\subset \mathbb C \setminus \{0,1\}$ the set $\pi^{-1}(D)$ is a countable union of open
set $U_n$ and $\pi:U_n\rightarrow D$ is biholomorphic for each $n.$
\end{theorem}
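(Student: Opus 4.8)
The plan is to combine elementary covering space theory with the Uniformization Theorem, reading off the stated local structure at the end from soft arguments. Write $X := \mathbb{C}\setminus\{0,1\}$; it is a connected, non-compact Riemann surface, so it has a universal covering $q:\widetilde{X}\to X$, and $\widetilde{X}$ carries the unique complex structure making $q$ holomorphic and a local biholomorphism. Once we know $\widetilde{X}$ is biholomorphic to $\Delta$ the theorem follows formally: a disc $D\subset X$ is simply connected, hence evenly covered, so $q^{-1}(D)$ is a disjoint union of open sets $U_n$, each mapped by $q$ homeomorphically and therefore biholomorphically (a holomorphic bijection) onto $D$; and since $\widetilde{X}$ is second countable there are only countably many such components. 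So the whole problem reduces to identifying the biholomorphism type of $\widetilde{X}$.

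First I would invoke the Uniformization Theorem: every simply connected Riemann surface is biholomorphic to the sphere $\widehat{\mathbb{C}}$, to $\mathbb{C}$, or to $\Delta$. The sphere is excluded at once, since a covering map is surjective and a compact cover would force $X$ to be compact. It remains to rule out $\widetilde{X}\cong\mathbb{C}$, which is the one step needing an actual idea. Suppose $q:\mathbb{C}\to X$ were the universal covering. Its group $G$ of deck transformations is isomorphic to $\pi_1(X)$, which is free on two generators (for instance, $X$ deformation retracts onto a wedge of two circles), in particular non-abelian. But $G$ acts on $\mathbb{C}$ by biholomorphic automorphisms, i.e. by affine maps $z\mapsto az+b$ with $a\neq 0$, and this action is free. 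An affine map with $a\neq 1$ fixes $b/(1-a)$, so every non-trivial element of $G$ is a translation $z\mapsto z+b$; translations commute, so $G$ would be abelian, a contradiction. Hence $\widetilde{X}\cong\Delta$, and transporting $q$ through this biholomorphism produces the holomorphic covering $\pi:\Delta\to\mathbb{C}\setminus\{0,1\}$ with the asserted properties.

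I expect the main thing to watch is circularity: the classical proof of Picard's theorem rests on precisely this covering, so $\mathbb{C}$ must be excluded via the deck group as above, not by appealing to ``an entire function omitting two values is constant.'' A self-contained alternative that avoids the Uniformization Theorem is to build $\pi$ by hand: map the ideal hyperbolic triangle with vertices $0,1,\infty$ conformally onto a half-plane, then extend across its three sides by the Schwarz Reflection Principle; the reflected triangles tessellate $\mathbb{H}$ (biholomorphic to $\Delta$) and the reflected maps patch together into a covering of $\mathbb{C}\setminus\{0,1\}$. In that approach the delicate point, and the real obstacle, is verifying that the extension is well defined and is genuinely a covering near the three cusps.
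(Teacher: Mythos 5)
Your argument is essentially correct, but note that the paper itself offers no proof of this statement: it is quoted as a standard tool (the covering is what powers the normal-family/Montel-type arguments later), so there is no ``paper proof'' to match against. What you supply is the standard uniformization route, and the steps are sound: existence of the universal cover $q:\widetilde{X}\to X$ with the pulled-back complex structure; Uniformization to reduce to $\widehat{\mathbb{C}}$, $\mathbb{C}$, or $\Delta$; exclusion of the sphere by compactness of the image; and exclusion of $\mathbb{C}$ by noting that the deck group is isomorphic to $\pi_1(\mathbb{C}\setminus\{0,1\})$, a free group on two generators, while a group of affine maps acting freely on $\mathbb{C}$ consists of translations and is abelian. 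You were right to flag circularity as the main hazard, and your argument avoids it: the only function-theoretic input beyond Uniformization is $\mathrm{Aut}(\mathbb{C})=\{az+b\}$, whose proof (injective entire functions are degree-one polynomials, via Casorati--Weierstrass and the open mapping theorem) does not rely on Picard or Montel. The deduction of the stated local structure is also fine: a disc $D\subset\mathbb{C}\setminus\{0,1\}$ is simply connected and locally path connected, hence evenly covered, the sheets are exactly the connected components of $\pi^{-1}(D)$, each is mapped by $\pi$ bijectively and holomorphically, hence biholomorphically, onto $D$, and second countability of $\Delta$ gives countably many components. Your sketched alternative---the explicit construction via repeated Schwarz reflection of a conformal map from the ideal triangle with vertices $0,1,\infty$ onto a half-plane (the modular function $\lambda$)---is the classical proof that avoids the Uniformization Theorem entirely; it costs more work (well-definedness of the analytic continuation via the tessellation, and the covering property near the punctures, as you say) but yields an explicit $\pi$, which is sometimes preferable in a course like this one where the covering is then used quantitatively.
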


Using this we prove the Theorem:

\begin{proof}
If $z_0$ is superattracting, then $z_0$ is a critical point. So we can suppose that $P'(z_0)=\lambda,$
$0<|\lambda|<1.$ We choose a small neighborhood of $z_0$, $U_1$ on which $P$ has an inverse $f$.\\

We can assume that $U_1$ is a disc. Then $U_2:=f(U_1)$ is also simply connected. Suppose there is no critical value in $U_2.$ Then $f$ extends to $U_2,$ $U_3=f(U_2)=f^2(U_1).$ Here $f^2$ is an inverse of
$P^2.$ If there is no critical value in $U_3$ we can keep extending $f$. If there is no critical point in the immediate basin of attraction, then we can keep going for all $n,$ and obtain
a sequence $U_n=f^{n-1}(U_1).$ We also see that $(f^n)'(z_0)=(1/\lambda)^n \rightarrow \infty.$ 
We have $f^n(z_0)=z_0.$ Pick a point $w_0\in \Delta, \pi(w_0)=z_0.$ We can find 
liftings $F_n: U_1 \rightarrow \Delta; f_n=\pi \circ F_n$ and $F_n(z_0)=w_0$. The derivatives $F_n'(z_0)\rightarrow \infty$. This is impossible.
 This implies that there cannot be two points $p,q$ in $\mathbb C$ so that all $U_n\subset \mathbb C\setminus \{p,q\}.$ Hence for large $n$, $U_n$ must equal $\mathbb C$ or the plane with one point removed. Since $P$ has critical points, the only possibility is the plane with one point removed. We might assume this is $0.$ But then $0$ is the only critical point, so the only possibility is that $P(z)=a z^d.$ This is however impossible, since this polynomial has no attracting fixed point in $\mathbb C$ except $0$ which is superattracting.
\end{proof}


{\bf THE JULIA SET}

Set $p\in \mathbb C$. If there is an open neighborhood $U(p)$ on which $\{P^n_{|U(z)}\}$ is a normal family into $\overline{\mathbb C}$, then we say that $p$ is in the Fatou set $F$. If not, we say that $p$
belongs to the Julia set $J.$ The Fatou set is open, and the Julia set is compact. Infinity belongs to the Fatou set.

\begin{theorem}
The Julia set is nonempty. 
\end{theorem}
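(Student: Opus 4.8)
The plan is to argue by contradiction. If $J=\emptyset$, then every point of $\overline{\mathbb{C}}$ (including $\infty$) has a neighborhood on which $\{P^n\}$ is normal; since normality is a local property and $\overline{\mathbb{C}}$ is compact, $\{P^n\}$ is then normal on all of $\overline{\mathbb{C}}$. Hence some subsequence $P^{n_k}$ converges uniformly on $\overline{\mathbb{C}}$ to a holomorphic map $g:\overline{\mathbb{C}}\to\overline{\mathbb{C}}$, i.e. to a rational map or to the constant $\infty$.

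Next I would analyze the behaviour near $\infty$. Since $P$ has degree $d\geq 2$, there is an $R>0$ with $|P(z)|\geq 2|z|$ whenever $|z|\geq R$, so $|P^n(z)|\geq 2^n|z|\to\infty$ uniformly on $\{|z|\geq R\}$. Therefore $g\equiv\infty$ on the open set $\{|z|>R\}\cup\{\infty\}$. A non-constant rational map omits no value more than finitely often, so $g^{-1}(\infty)$ cannot contain an open set; thus $g$ is the constant $\infty$, and $P^{n_k}\to\infty$ uniformly on all of $\overline{\mathbb{C}}$. In particular, for $k$ large we get $|P^{n_k}(z)|>1$ for every $z\in\mathbb{C}$. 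But $P^{n_k}$ is a polynomial of degree $d^{n_k}\geq 1$, hence non-constant, so by the fundamental theorem of algebra it must take the value $0$ somewhere in $\mathbb{C}$ --- a contradiction. Therefore $J\neq\emptyset$.

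The estimate $|P(z)|\geq 2|z|$ for large $|z|$ and the passage from local normality to normality on the compact surface $\overline{\mathbb{C}}$ are routine. The one point that needs a little care is that the locally uniform limit of the rational maps $P^{n_k}$ is again rational (or the constant $\infty$), which is what makes ``$g\equiv\infty$ on an open set'' force ``$g\equiv\infty$'' everywhere; this is a standard Hurwitz-type fact about holomorphic maps into $\overline{\mathbb{C}}$, and I do not expect any genuine obstacle beyond invoking it correctly.
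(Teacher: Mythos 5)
Your proof is correct, but it takes a genuinely different route from the paper's. The paper also assumes $J=\emptyset$ and extracts a limit $f=\lim_k P^{n_k}$, but then it splits into two cases: if $f$ is a finite constant $a$, the images $P^{n_k}(\overline{\mathbb{C}})$ would eventually lie in a small disc around $a$, contradicting the surjectivity of each iterate; if $f$ is nonconstant, it has finitely many zeros counted with multiplicity, and a Hurwitz-type zero-counting argument forces $P^{n_k}$ to have the same bounded number of zeros for large $k$, contradicting the fact that $P^{n_k}$ has $d^{n_k}$ zeros. You instead exploit the dynamics at $\infty$: since $\deg P\geq 2$, the estimate $|P(z)|\geq 2|z|$ for $|z|\geq R$ shows $P^n\to\infty$ uniformly on $\{|z|\geq R\}$, so any limit must be $\equiv\infty$ on an open set, hence (being rational or constant) identically $\infty$, and then the fundamental theorem of algebra gives the contradiction. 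Your version is arguably more elementary --- it needs only the escape estimate near $\infty$ and FTA, not the zero-counting via Hurwitz --- but it is specific to polynomials, since it uses the superattracting fixed point at $\infty$; the paper's two-case argument (surjectivity plus degree counting of preimages of $0$) carries over essentially unchanged to arbitrary rational maps of degree $d\geq 2$, which is why it is the standard formulation. Both proofs quietly use the same routine fact you flag: local normality on the compact sphere yields a subsequence converging locally uniformly in the spherical metric to a holomorphic map $\overline{\mathbb{C}}\to\overline{\mathbb{C}}$, which is rational or constant.
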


\begin{proof}
Assume that $J$ is empty. Then $\{P^n\}$ is normal on all of $\overline{\mathbb C}.$
Hence we can find a limit $f=\lim_k P^{n_k}.$ Then $f$ is a holomorphic map from $\overline{\mathbb C}$ to itself. First case is that $f$ is constant, $f\equiv a$. Then we take a small disc $D$ around $a$. For all large
$k,$  each $P^{n_k}(\mathbb P^1)$ must be contained in $D.$ This is impossible since each iterate of $P$ is onto $\mathbb P^1.$ Hence $f$ is nonconstant. There are finitely many points mapped to the origin, with total multiplicity $m<\infty.$ But then we can also conclude that for large $k$ the same is true for all $P^{n_k}.$ But these are polynomials of degree $d^{n_k}$ so will have $d^{n_k}$ zeroes with multiplicity.
\end{proof}

\begin{definition}
A set $E\subset \overline{\mathbb C}$ is completely invariant for $P$ if
$P(E)=E$ and $P^{-1}(E)=E.$
\end{definition}

\begin{lemma}
If $P^{-1}(E)=E,$ then $E$ is completely invariant.
\end{lemma}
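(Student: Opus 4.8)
The plan is to notice that, since $P^{-1}(E)=E$ is given, the only thing left to prove is the equality $P(E)=E$, and I would establish this by two inclusions. Throughout, $P$ is regarded as the degree-$d$ self-map of $\overline{\mathbb C}$ (with $P(\infty)=\infty$), so in particular $P$ is \emph{surjective} onto $\overline{\mathbb C}$; this surjectivity is the one nontrivial input.

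First I would prove $P(E)\subseteq E$. Let $w\in P(E)$, say $w=P(z)$ with $z\in E$. By hypothesis $E=P^{-1}(E)$, so $z\in E$ means precisely that $P(z)\in E$; hence $w\in E$.

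Next I would prove the reverse inclusion $E\subseteq P(E)$, and this is where surjectivity enters. Given $w\in E$, choose $z\in\overline{\mathbb C}$ with $P(z)=w$. Then $z\in P^{-1}(E)=E$, so $w=P(z)\in P(E)$. Combining the two inclusions gives $P(E)=E$, which together with the assumed $P^{-1}(E)=E$ is exactly the statement that $E$ is completely invariant.

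There is essentially no obstacle here beyond remembering that a nonconstant polynomial is onto $\overline{\mathbb C}$; the rest is a formal manipulation of images and preimages. If one wanted to state the lemma for more general (say rational or merely surjective) self-maps, the same proof goes through verbatim as long as $P$ is surjective, so that hypothesis is the only thing worth flagging.
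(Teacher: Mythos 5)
Your proof is correct and follows essentially the same route as the paper: the inclusion $P(E)\subseteq E$ comes directly from $E=P^{-1}(E)$, and the reverse inclusion $E\subseteq P(E)$ uses surjectivity of $P$ on $\overline{\mathbb C}$ to produce a preimage, which then lies in $P^{-1}(E)=E$. You merely make the surjectivity hypothesis explicit, which the paper's argument uses tacitly.
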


\begin{proof}
Suppose that $P^{-1}(E)=E.$ If $p\in E,$ then $p\in P^{-1}(E),$ hence $P(p)\in E.$ So
$P(E)\subset E.$ If $q\in E,$ then there exists $p\in P^{-1}(E)$ so that $P(p)=q$ Hence there exists
$p\in E$ for which $P(p)=q.$ Hence $E \subset P(E).$
\end{proof}

\begin{theorem}
The Fatou set and the Julia set are both completely invariant. 
\end{theorem}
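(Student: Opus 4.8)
The plan is to show that a point $p$ lies in the Fatou set if and only if $P(p)$ does, and separately that $p$ lies in the Fatou set if and only if every preimage of $p$ under $P$ does; by the lemma it then suffices to check invariance of the Fatou set under $P^{-1}$, but in fact it is cleaner to establish both directions directly and deduce complete invariance of $F$, and then pass to complements for $J$. The underlying mechanism throughout is that $P$ is a nonconstant holomorphic (in fact proper, surjective, finite-to-one) self-map of $\overline{\mathbb C}$, so it is locally a branched cover, and normality is a local condition that transports along such maps.

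First I would recall the relevant normality facts: a family $\{g_n\}$ into $\overline{\mathbb C}$ is normal on an open set $V$ if and only if it is normal on a neighborhood of each point of $V$; if $\{g_n\}$ is normal on $V$ and $h$ is holomorphic on an open set $W$ with $h(W)\subset V$, then $\{g_n\circ h\}$ is normal on $W$ (precompose a locally uniformly convergent subsequence); and conversely if $\{g_n\circ h\}$ is normal on $W$ and $h\colon W\to V$ is a holomorphic surjection that is a local homeomorphism away from a discrete set, then $\{g_n\}$ is normal on $V$ (locally invert $h$). I would then argue as follows. Suppose $p\in F$, with neighborhood $U$ on which $\{P^n\}$ is normal. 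Choose a neighborhood $W$ of $P(p)$ with $P(W)\subset U$; this is possible by continuity. On $W$ we have $P^{n+1}=P^n\circ P$, so $\{P^{n+1}\}_{n\ge 1}$ is normal on $W$ by the composition fact, and adjoining the single map $P^1=P$ (holomorphic, hence its translates form a trivially normal one-element family, and a finite union of normal families is normal) shows $\{P^n\}_{n\ge 1}$ is normal on $W$. Hence $P(p)\in F$, i.e. $P(F)\subset F$ and more precisely $p\in F\Rightarrow P(p)\in F$.

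Next, suppose $P(p)\in F$, with neighborhood $V$ on which $\{P^n\}$ is normal. Let $q$ be any point with $P(q)=P(p)$ that is not a critical point of $P$; then $P$ restricts to a biholomorphism from a neighborhood $W_q$ of $q$ onto a neighborhood of $P(p)$ contained in $V$, with holomorphic inverse $s_q\colon P(W_q)\to W_q$. On $W_q$ we have $P^{n-1}=P^{n-1}\circ s_q\circ P$ wait—more simply, for $n\ge 1$, $P^n|_{W_q} = P^{n-1}\circ P|_{W_q}$, and $\{P^{n-1}\circ P\} = \{(P^{n-1})\circ P\}$ is normal on $W_q$ because $\{P^{n-1}\}_{n\ge 1}$ is normal on $V\supset P(W_q)$ and $P|_{W_q}$ is holomorphic into $V$; again adjoin the constant-index term. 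So every noncritical preimage of a Fatou point is a Fatou point. If $q$ is a critical preimage, I handle it by the converse composition fact: pick a small disc $D$ around $P(p)$ with $D\subset V$, and note $\{P^n\}$ normal on $D$ together with $P$ being a proper surjective holomorphic map forces $\{P^n\}$ normal on a neighborhood of $q$, since one can choose that neighborhood $W_q$ with $P(W_q)\subset D$ and use that $\{P^n\circ P^{-1}\text{-branches}\}$—concretely, normality of $\{P^{n}\}_{n\ge2}$ on $W_q$ follows from writing $P^n = P^{n-1}\circ P$ on $W_q$ with $P^{n-1}$ normal on $D$; the branching of $P$ at $q$ is irrelevant for this direction because we are precomposing, not postcomposing. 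Thus $P^{-1}(F)\subset F$. Combined with $P(F)\subset F$ and the surjectivity of $P$, one gets $F\subset P^{-1}(F)$ as well (if $p\in F$ then $P(p)\in F$ so $p\in P^{-1}(F)$), hence $P^{-1}(F)=F$, and by the Lemma $F$ is completely invariant. Finally, since $J = \overline{\mathbb C}\setminus F$ and $P^{-1}$ commutes with complementation ($P^{-1}(\overline{\mathbb C}\setminus F) = \overline{\mathbb C}\setminus P^{-1}(F)$, using that $P^{-1}$ of the whole sphere is the whole sphere), we get $P^{-1}(J)=J$, so $J$ is completely invariant by the Lemma as well.

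The main obstacle is the direction "$P(p)\in F \Rightarrow p\in F$" at a critical point $p$: there the local inverse of $P$ does not exist, so one cannot simply pull back a convergent subsequence through a branch of $P^{-1}$. The resolution is to observe that normality is needed only after \emph{pre}composing with $P$ (we write $P^n|_{\text{nbhd of }p} = P^{n-1}\circ P$), and precomposition with a continuous map never destroys normality, so the critical-point case actually needs no special argument once the statement is set up as $P^n = P^{n-1}\circ P$ rather than $P^n = P\circ P^{n-1}$. I would be careful to state the composition lemma in the form "$\{g_n\}$ normal on $V$, $h$ holomorphic $W\to V$ $\Rightarrow$ $\{g_n\circ h\}$ normal on $W$" and to note the harmless bookkeeping of shifting the index by one (a finite set of holomorphic maps is a normal family, and the union of two normal families is normal).
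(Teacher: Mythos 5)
Your second paragraph, proving $P^{-1}(F)\subset F$, is correct, and you are right that critical points cause no trouble there: on a neighborhood $W_q$ of a preimage $q$ with $P(W_q)\subset V$ one writes $P^n=P^{n-1}\circ P$ and uses that precomposition with a holomorphic map preserves normality. The genuine gap is in the first paragraph, i.e.\ the other inclusion $P(F)\subset F$ (equivalently $F\subset P^{-1}(F)$). The step ``choose a neighborhood $W$ of $P(p)$ with $P(W)\subset U$; this is possible by continuity'' is false: $U$ is a neighborhood of $p$, while continuity of $P$ at the point $P(p)$ only lets you arrange that $P(W)$ lies near $P^{2}(p)$, which in general is nowhere near $p$. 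Once the neighborhoods are sorted out, what that paragraph actually shows is again that points mapped by $P$ into $U$ belong to $F$ --- the same backward inclusion as in your second paragraph, relabelled. Since your final deduction of $P^{-1}(F)=F$ explicitly invokes $P(F)\subset F$, and the paper's Lemma requires the equality $P^{-1}(F)=F$ (one inclusion is not enough), complete invariance of $F$, and hence of $J$, is not established by your argument.

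The forward inclusion is exactly the half that needs an idea beyond precomposition, and it is where the paper does real work: if $\{P^n\}$ is normal on $U(q)$, shrink $U$ so that all images $P^n(U)$ have spherical diameter at most $\epsilon$; since $P$ is an open map, $P(U)$ contains a neighborhood $V$ of $P(q)$, and $P^n(V)\subset P^{n+1}(U)$ also has diameter at most $\epsilon$, giving normality of $\{P^n\}$ on $V$. Alternatively you could salvage your own local-inverse machinery, but deployed on this direction: if $p$ is not a critical point of $P$, the branch $s$ of $P^{-1}$ with $s(P(p))=p$ gives $P^n=P^{n+1}\circ s$ near $P(p)$, and normality follows by precomposing $\{P^{n+1}\}$ (normal near $p$) with $s$; the critical case then still needs the openness or a properness argument. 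So, contrary to your closing remark, the direction where branching genuinely matters is $p\in F\Rightarrow P(p)\in F$, not $P(p)\in F\Rightarrow p\in F$.
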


\begin{proof}
It suffices to show that the Fatou set is completely invariant. Suppose that $p\in F$ and $P(q)=p.$
We can find a neighborhood $U(p)$ on which the iterates $P^n$ is normal. But then $P^{n+1}$
is normal on a neighborhood of $q$. Hence $P^{-1}(F)\subset F.$ Suppose that $q\in F.$
Then there exists a neighborhood $U(q)$ on which the iterates $P^n$ is a normal family.
Let $\epsilon>0.$ Then we can shrink $U$ so that $P^n(U)$ must have diameter at most $\epsilon$ for all $n.$  Since $P$ is an open mapping, there must then also exist a neighborhood $V(P(q))$ so that
the diameters of all sets $P^n(V)$ are at most $\epsilon.$ But then the iterates $P^n$ is a normal family on $V.$ Hence $P(q)\in F.$ Therefore $q\in P^{-1}(F), $ so $F \subset P^{-1}(F).$
\end{proof}

\begin{theorem}
$J(P)=J(P^N), F(P)=F(P^N)$ for any integer $N\geq 2.$ 
\end{theorem}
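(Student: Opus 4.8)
The plan is to prove the equality $F(P)=F(P^N)$; the corresponding statement for the Julia sets follows immediately by taking complements in $\overline{\mathbb C}$, since $J=\overline{\mathbb C}\setminus F$ by definition and this operation is unaffected by whether we perform it for $P$ or for $P^N$.

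One inclusion is essentially free. Suppose $p\in F(P)$ and pick a neighborhood $U$ of $p$ on which $\{P^n\}$ is a normal family into $\overline{\mathbb C}$. The iterates of $P^N$ form the subfamily $\{P^{Nk}:k\geq 0\}\subset\{P^n:n\geq 0\}$, and a subfamily of a normal family is normal. Hence $\{(P^N)^k\}$ is normal on $U$, so $p\in F(P^N)$, giving $F(P)\subset F(P^N)$.

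For the reverse inclusion, suppose $p\in F(P^N)$ and choose a neighborhood $U$ of $p$ on which $\{(P^N)^k\}=\{P^{Nk}\}$ is normal. Write an arbitrary $n\geq 0$ as $n=Nk+r$ with $0\leq r\leq N-1$, so that $P^n=P^r\circ P^{Nk}$. I claim that for each fixed $r$ the family $\mathcal F_r:=\{P^r\circ P^{Nk}:k\geq 0\}$ is normal on $U$. Indeed, given a sequence in $\mathcal F_r$, use normality of $\{P^{Nk}\}$ to extract a subsequence $P^{Nk_j}$ converging locally uniformly on $U$ to some holomorphic map $g:U\to\overline{\mathbb C}$; since $P^r$ is a continuous self-map of the compact space $\overline{\mathbb C}$, it is uniformly continuous, and therefore $P^r\circ P^{Nk_j}\to P^r\circ g$ locally uniformly on $U$. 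Thus $\{P^n:n\geq 0\}\subset\mathcal F_0\cup\mathcal F_1\cup\cdots\cup\mathcal F_{N-1}$, a finite union of normal families. A finite union of normal families is again normal: any sequence drawn from the union has, by the pigeonhole principle, infinitely many terms lying in a single $\mathcal F_r$, and that subsequence admits a locally uniformly convergent sub-subsequence. Hence $\{P^n\}$ is normal on $U$, so $p\in F(P)$, and $F(P^N)\subset F(P)$.

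The argument is largely routine; the only point requiring a little care is the implication $P^{Nk_j}\to g\ \Rightarrow\ P^r\circ P^{Nk_j}\to P^r\circ g$, which is precisely why one works throughout with the spherical metric on $\overline{\mathbb C}$. The maps $P^r$ are holomorphic self-maps of $\overline{\mathbb C}$ fixing $\infty$, and uniform continuity on the compact Riemann sphere is what allows post-composition with $P^r$ to commute with passage to locally uniform limits, even at points of $U$ where the limiting value $g$ equals $\infty$.
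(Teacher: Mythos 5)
Your proof is correct and follows essentially the same route as the paper: the easy inclusion $F(P)\subset F(P^N)$ via subfamilies, and the reverse inclusion by splitting iterates according to the residue $r$ of $n$ modulo $N$ and post-composing a convergent subsequence of $\{P^{Nk}\}$ with $P^r$. Your explicit attention to why post-composition with $P^r$ preserves locally uniform convergence (continuity of $P^r$ on the compact sphere) just spells out a step the paper leaves implicit.
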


\begin{proof}
It suffices to prove this for the Fatou set. If $p\in F(P)$, then there is a neighborhood $U(P)$ on which the
iterates $P^n$ is a normal family. Hence also the iterates $P^{Nn}$ is a normal family. So $F(P)\subset F(P^N).$ Suppose that $p\in F(P^N).$ Then there is a neighborhood $U(p)$ on which the iterates
$P^{Nn}$ is a normal family. Take any subsequence $P^{n_k}$. By taking a thinner subsequence,
we can assume $n_k=Nm_k+\ell$. For an even thinner subsequence, we an assume that $P^{Nm_k}$
converges to a map $f.$ But then $P^{Nm_k+\ell}=P^\ell \circ P^{Nm_k} \rightarrow P^\ell \circ f$.
\end{proof}

EXCEPTIONAL SET:\\

Let $z\in J$ and $U=U(z)\subset \mathbb C.$ Then $\{P^n_{|U}\}$ cannot be a normal family. Let
$V(z,U)=\cup_n P^n(U)\subset \mathbb C.$ If $\mathbb C\setminus V$ contains two points, then the family would be normal, hence $V$ can omit at most one point. If we shrink $U$ only this point can be omitted. We say that this point belongs to the exceptional set, $E_z$, so $E_z$ is either $\infty$ or $\infty$ and one finite point $b$. Assume that $a\neq b$. Then $a=P^n(c)$ for some $n$ and some $c\in U.$ But then $P(a)\in V$, hence $P(a)\neq b.$ Therefore $P^{-1}(E_z)=E_z$, so the exceptional set is completely invariant. We can assume that if there is such a $b$, then $b$ is zero. Then the map $P$ must be on the form $az^d.$ We can conclude:

\begin{lemma}
Suppose that $P(z)=az^d$. Then the exceptional set for any point in the Julia set
consists of the two points $\{0,\infty\}.$ For any other polynomial the exceptional set
consists of $\infty$ only.
\end{lemma}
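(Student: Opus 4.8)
The plan is to reduce the statement to the two facts already assembled above: that $E_z$ is completely invariant ($P^{-1}(E_z)=E_z$) and that $E_z$ consists of $\infty$ together with at most one finite point. The argument then splits into two directions.

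\textbf{A general polynomial has no finite exceptional point.} I would start from a point $z\in J$ for which $E_z$ contains a finite point $b$, so $E_z=\{b,\infty\}$, and exploit complete invariance: since $P$ is a polynomial, $P^{-1}(\infty)=\{\infty\}$, so $P^{-1}(E_z)=E_z$ forces $P^{-1}(b)=\{b\}$. Then the degree-$d$ polynomial $P(w)-b$ has $w=b$ as its only root, whence $P(w)-b=a(w-b)^d$ for some $a\neq 0$, i.e. $P(w)=a(w-b)^d+b$. Conjugating by the translation $\tau(w)=w-b$ replaces $P$ by $Q(w)=aw^d$ and carries $E_z$ to $E_{\tau(z)}=\{0,\infty\}$. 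So, unless $P$ has this special shape after recentring, no point of $J$ can have a finite exceptional point; and because $U(z)\subset\mathbb{C}$ forces $P^n(U(z))\subset\mathbb{C}$, the value $\infty$ is always omitted, giving $E_z=\{\infty\}$.

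\textbf{For $P(w)=aw^d$ every Julia point has exceptional set $\{0,\infty\}$.} Here I would use that $0$ is a superattracting fixed point, hence $0\in F$, so every $z\in J$ has $z\neq 0$; choose $U=U(z)\subset\mathbb{C}$ a small disc avoiding $0$. Since $P^n(w)=c_n w^{d^n}$ with $c_n=a^{(d^n-1)/(d-1)}\neq 0$, the map $P^n$ vanishes only at $w=0$, so $0\notin P^n(U)$ for all $n$ (and of course $\infty\notin P^n(U)$). Thus $V(z,U)$ omits both $0$ and $\infty$, so $\{0,\infty\}\subset E_z$; combined with the a priori bound that $E_z$ is $\infty$ plus at most one finite point, this gives $E_z=\{0,\infty\}$.

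\textbf{Main obstacle.} I do not expect a genuine difficulty: Montel's theorem and the complete invariance of $E_z$ are already in hand, and the computation for $aw^d$ is immediate from the formula $P^n(w)=c_nw^{d^n}$. The only delicate point is bookkeeping around the normalization: the statement singles out the exceptional polynomials only up to an affine (translational) conjugacy — their true normal form is $a(w-b)^d+b$, which is literally $aw^d$ precisely when $b=0$ — so the write-up must either carry the translation $\tau$ explicitly, as above, or note at the outset that $J$, $F$ and the sets $E_z$ transform naturally under affine conjugacy and hence one may assume $b=0$ without loss of generality.
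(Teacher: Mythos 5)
Your argument is correct and follows essentially the same route as the paper: Montel's theorem bounds $E_z$ by $\infty$ plus at most one finite point, and complete invariance forces $P^{-1}(b)=\{b\}$, hence $P(w)=a(w-b)^d+b$, with the translation $\tau$ handling the normalization that the paper records in its bracketed remark $P(z)=a(z-z_0)^d+z_0$. Your second part, checking directly that for $aw^d$ the point $0$ really is omitted by every $V(z,U)$ so that $E_z=\{0,\infty\}$, is a small but welcome completion of a step the paper leaves implicit.
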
 

[More precisely, the maps $P$ are $P(z)=a(z-z_0)^d+z_0.$]

\begin{theorem}
If $z$ is a nonexceptional point, then $J\subset \overline{\cup P^{-n}(z)}$
\end{theorem}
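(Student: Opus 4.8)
The plan is to reduce the statement to the following claim: for every $p\in J$ and every neighborhood $U$ of $p$, the set $U$ meets $\bigcup_n P^{-n}(z)$. This is exactly the assertion that $p\in\overline{\bigcup_n P^{-n}(z)}$, and since $p$ ranges over all of $J$ it gives $J\subset\overline{\bigcup_n P^{-n}(z)}$. So fix $p\in J$ and a neighborhood $U$ of $p$. Because $p\in J$, the family $\{P^n|_U\}$ is not normal. I would then set $W:=\bigcup_{n\ge 0}P^n(U)\subset\mathbb C$ and repeat the Montel argument already used in the discussion of the exceptional set: if $\mathbb C\setminus W$ contained two distinct points $a,b$, then every $P^n|_U$ would omit the three values $a,b,\infty$ in $\overline{\mathbb C}$, so $\{P^n|_U\}$ would be normal, a contradiction. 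Hence $\mathbb C\setminus W$ has at most one point.

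The next step is to pin down that single possibly-omitted point. Since $P(W)=\bigcup_{n\ge 1}P^n(U)\subseteq W$, no point of $W$ can be carried by $P$ into $\mathbb C\setminus W$: if $q\in W$ and $P(q)\notin W$, then $P(q)\in P(W)\subseteq W$, a contradiction. Thus $P^{-1}(\mathbb C\setminus W)\subseteq\mathbb C\setminus W$. So if $\mathbb C\setminus W=\{b\}$, then $P^{-1}(b)\subseteq\{b\}$; and since $P$ is onto $\mathbb C$, in fact $P^{-1}(b)=\{b\}$. That is, $b$ is a totally invariant finite point, which (as in the Lemma above, forcing $P(z)=a(z-b)^d+b$) means $b$ lies in the exceptional set.

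To conclude: as $z$ is non-exceptional we have $z\ne\infty$ and $z$ is not a totally invariant finite point, so $z\ne b$; therefore $z\in W$, i.e.\ $z=P^n(w)$ for some $n\ge 0$ and some $w\in U$. Then $w\in U\cap P^{-n}(z)$, so $U$ meets $\bigcup_n P^{-n}(z)$, as required. Running this over all neighborhoods $U$ of $p$ and then over all $p\in J$ finishes the proof.

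The part I expect to require the most care is the bookkeeping around the single exceptional point: verifying that the point $W$ can omit is genuinely forced to be totally invariant (the short $P(W)\subseteq W$ argument), and checking that the hypothesis ``$z$ non-exceptional'' is precisely what rules out $z=b$. The use of Montel's theorem is routine here, since the identical argument already appears in the paragraph on the exceptional set preceding the statement.
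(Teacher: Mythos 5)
Your proof is correct and follows essentially the same route as the paper: the paper's own (very short) proof invokes exactly the preceding exceptional-set discussion — for any neighborhood $U$ of a Julia point the forward orbit $\bigcup_n P^n(U)$ omits at most one point, which is necessarily totally invariant and hence exceptional — so a nonexceptional $z$ is hit by some $P^n(\eta)$, $\eta\in U$. You have merely written out in full the Montel and total-invariance steps that the paper leaves implicit, so no substantive difference.
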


\begin{proof}
If $ w\in J$ and $U=U(w),$ then $z=P^n(\eta)$ for some $\eta \in U.$ So $\eta \in \cup P^{-n}(z).$
\end{proof}

\begin{theorem}
If $z\in J,$ then $J=\overline{\cup P^{-n}(z)}.$
\end{theorem}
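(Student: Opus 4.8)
The plan is to prove the two inclusions $J\subset \overline{\cup P^{-n}(z)}$ and $\overline{\cup P^{-n}(z)}\subset J$ separately. The first is essentially the previous theorem, once we observe that a point of $J$ is never exceptional; the second is a soft consequence of the complete invariance and the closedness of $J$.

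First I would show that if $z\in J$ then $z$ is nonexceptional. By the Lemma above, the exceptional set is always contained in $\{z_0,\infty\}$, where in the only nontrivial case $P(z)=a(z-z_0)^d+z_0$ and $z_0$ is a fixed point with $P'(z_0)=0$; for every other polynomial the exceptional set is just $\{\infty\}$. But a superattracting fixed point lies in its immediate basin of attraction, hence in the Fatou set, and $\infty$ likewise belongs to the Fatou set. Thus the exceptional set is disjoint from $J$, so every $z\in J$ is nonexceptional. Applying the preceding theorem then gives $J\subset \overline{\cup P^{-n}(z)}$.

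For the reverse inclusion I would use that $J$ is completely invariant (proved above): since $P^{-1}(J)=J$, an induction on $n$ gives $P^{-n}(z)\subset J$ for every $n\geq 0$ whenever $z\in J$. Hence $\cup_n P^{-n}(z)\subset J$, and since $J$ is closed we conclude $\overline{\cup P^{-n}(z)}\subset J$. Combining the two inclusions yields $J=\overline{\cup P^{-n}(z)}$.

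The only real content is the first step — that the exceptional points cannot lie in the Julia set — and even this is immediate from the Lemma above and the remark following it: the exceptional points are $\infty$ and, in the special case $P(z)=a(z-z_0)^d+z_0$, the superattracting fixed point $z_0$, all of which are Fatou points. Everything else is formal, so I do not expect a genuine obstacle here.
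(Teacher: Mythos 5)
Your proof is correct and follows essentially the same route as the paper: the reverse inclusion from complete invariance and closedness of $J$, and the forward inclusion from the preceding theorem applied to the nonexceptional point $z$. The only difference is that you spell out why exceptional points (the superattracting fixed point of $a(z-z_0)^d+z_0$ and $\infty$) lie in the Fatou set, a step the paper leaves implicit.
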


\begin{proof}
Since $J$ is completely invariant it follows that $\overline{\cup P^{-n}(z)}\subset J.$
The other inclusion follows from the previous theorem since $z$ is non exceptional.
\end{proof}

\begin{theorem}
Suppose that $Z\subset J$ is nonempty and completely invariant. Then $\overline{Z}=J.$
\end{theorem}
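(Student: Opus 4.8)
The plan is to reduce the statement directly to the preceding theorem (if $z\in J$ then $J=\overline{\cup P^{-n}(z)}$). Since $Z$ is nonempty, I would fix a point $z\in Z$. As $Z\subset J$, this point lies in the Julia set, so the previous theorem applies and gives $J=\overline{\bigcup_n P^{-n}(z)}$.

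Next I would exploit complete invariance of $Z$ to show $\bigcup_n P^{-n}(z)\subset Z$. From $P^{-1}(Z)=Z$ one gets by an easy induction that $P^{-n}(Z)=Z$ for all $n$, and since $z\in Z$ this yields $P^{-n}(z)\subset P^{-n}(Z)=Z$ for every $n$. Taking the union over $n$ and then closures gives $J=\overline{\bigcup_n P^{-n}(z)}\subset\overline{Z}$. For the reverse inclusion I would simply use $Z\subset J$ together with the fact (recorded earlier) that $J$ is compact, hence closed, so $\overline{Z}\subset\overline{J}=J$. Combining the two inclusions yields $\overline{Z}=J$.

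The only point requiring a moment's care — and really the sole ``obstacle'' — is the legitimacy of invoking the previous theorem for an arbitrary point of $J$: this is fine because every point of the Julia set is nonexceptional, the exceptional set being contained in the Fatou set (it is $\{\infty\}$, or $\{0,\infty\}$ for maps conjugate to $az^d$, whose finite exceptional point is the superattracting fixed point). Everything else is routine set-theoretic manipulation, so I do not expect any genuine difficulty.
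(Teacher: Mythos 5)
Your proposal is correct and follows essentially the same route as the paper: pick $z\in Z\subset J$, use complete invariance to get $P^{-n}(z)\subset Z$ for all $n$, and invoke the preceding theorem $J=\overline{\cup_n P^{-n}(z)}$ (valid since points of $J$ are nonexceptional) to conclude $\overline{Z}=J$. The paper's proof is just a terser version of yours, and your added remarks on nonexceptionality and the reverse inclusion $\overline{Z}\subset J$ are accurate fillings of details the paper leaves implicit.
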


\begin{proof}
Pick $z\in Z.$ Then $P^{-n}(z)\subset Z$. Hence $\overline{Z}=J.$
\end{proof}

 {\bf Exercises}\\

\begin{exercise}
Show that an annulus $A=\{0<a<|z|<b<\infty\}$ has a universal cover of the form
$S=\{0<y<1\}, z=x+iy.$ Find an explicit covering map $\pi: S\rightarrow A.$ Let $D=\{z\in A;x>0\}$.
Find the inverse image of $A$. Show that $\pi:U_n\rightarrow D$ is biholomorphic on any connected
component $U_n$ of $\pi^{-1}(A).$
\end{exercise}

\begin{exercise}
Show that the exceptional set of $P$ and $P^N$ is the same for any polynomial $P.$
\end{exercise}

\begin{exercise}
Let $P(z)$ be a polynomial of degree $d\geq 2.$ Use the conjugation $w=1/z$ to describe the polynomial as $Q(w)$ near infinity. Show that $Q(w)$ is conjugate to $R(t)=t^d$ near $w=0.$
\end{exercise}

\newpage



\begin{theorem}
Suppose that $D$ is a union of connected components of $F$ and suppose that $D$ is completely invariant. Then $J=\partial D.$
\end{theorem}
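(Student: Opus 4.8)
The plan is to show the two inclusions $\partial D \subset J$ and $J \subset \partial D$ separately, using the complete invariance of $D$ together with the earlier characterization that $J = \overline{\cup P^{-n}(z)}$ for any $z \in J$. First I would dispose of the easy direction. Since $D$ is a union of Fatou components, $D$ is open and $D \subset F$; since $F$ is open, $\partial D \cap D = \emptyset$, so $\partial D \subset \overline{F} \setminus F$. But points of $\partial D$ are limits of points in $D \subset F$, and the complement of $F$ is $J$, which is closed; hence $\partial D \subset J$. (One should note $D \neq \emptyset$ — if $D$ were empty the statement would be vacuous or false, so I would assume $D$ nonempty, and also $D \neq \overline{\C}$, which holds because $J \neq \emptyset$ by the earlier theorem, so $\partial D$ is a genuine nonempty set.)

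For the reverse inclusion $J \subset \partial D$, the key point is that $J$ is the smallest closed completely invariant subset of $J$ in the sense of the theorem ``Suppose that $Z \subset J$ is nonempty and completely invariant. Then $\overline{Z} = J$.'' So it suffices to produce a nonempty completely invariant subset of $J$ contained in $\overline{\partial D}$, or more directly to show $\partial D$ itself is completely invariant and nonempty and contained in $J$, and then invoke that theorem with $Z = \partial D$ (noting $\partial D$ is already closed, so $\overline{\partial D} = \partial D = J$). Thus the heart of the argument is: $P(\partial D) = \partial D$ and $P^{-1}(\partial D) = \partial D$. By the earlier lemma, it is enough to check $P^{-1}(\partial D) = \partial D$.

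To prove $P^{-1}(\partial D) = \partial D$: $P$ is a continuous open surjection of $\overline{\C}$. For any open set $W$, one has $P^{-1}(\partial W) = \partial(P^{-1}(W))$ when $P$ is an open continuous map — indeed $P^{-1}(\overline W) = \overline{P^{-1}(W)}$ fails in general but $P^{-1}(\partial W) \subset \partial(P^{-1}W)$ always, and openness of $P$ gives the reverse. Applying this with $W = D$ and using $P^{-1}(D) = D$ (complete invariance), we get $P^{-1}(\partial D) = \partial(P^{-1}(D)) = \partial D$. Concretely: if $P(q) \in \partial D$, then every neighborhood of $P(q)$ meets both $D$ and its complement; pulling back via the continuous open map $P$, every neighborhood of $q$ meets both $P^{-1}(D) = D$ and $P^{-1}(\overline\C \setminus D) = \overline\C \setminus D$, so $q \in \partial D$; conversely if $q \in \partial D$, a neighborhood basis at $q$ maps to a neighborhood basis of sets at $P(q)$ (openness), each meeting $D$ and its complement, so $P(q) \in \partial D$.

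The main obstacle I anticipate is the subtlety at branch points of $P$ in verifying $P^{-1}(\partial D) = \partial D$ — specifically the inclusion $\partial D \subset P^{-1}(\partial D)$, i.e.\ that a boundary point cannot be mapped into the interior of $D$ or into the interior of the complement. This is exactly where openness of $P$ is essential (a merely continuous map could collapse boundary into interior), and since $P$ is a nonconstant holomorphic map hence open, this goes through; but it is the step that deserves the most care. Once $\partial D$ is shown nonempty, closed, contained in $J$, and completely invariant, the cited theorem immediately yields $\partial D = \overline{\partial D} = J$, completing the proof.
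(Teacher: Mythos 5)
Your overall strategy is exactly the paper's: the paper's proof is the single assertion that $\partial D$ is nonempty, completely invariant and contained in $J$, after which the earlier theorem (``a nonempty completely invariant subset $Z\subset J$ has $\overline Z=J$'') gives $J=\overline{\partial D}=\partial D$, since $\partial D$ is closed. Your verification of complete invariance of $\partial D$ (using that the nonconstant holomorphic map $P$ is continuous, open and satisfies $P^{-1}(D)=D$, so that $P^{-1}(\partial D)=\partial(P^{-1}D)=\partial D$, and then citing the lemma that $P^{-1}(E)=E$ implies complete invariance) is correct, as is your remark that $D$ must be tacitly assumed nonempty and that $\partial D\neq\emptyset$ because $D$ is a nonempty open proper subset of the connected sphere.

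There is, however, a genuine gap in your argument for $\partial D\subset J$. From ``$\partial D\cap D=\emptyset$'' and ``points of $\partial D$ are limits of points of $D\subset F$'' you conclude $\partial D\subset\overline F\setminus F$, but this inference is invalid for a general open subset $D$ of $F$: a small disc lying strictly inside a single Fatou component is open, satisfies $\partial D\cap D=\emptyset$, and its boundary lies entirely in $F$, not in $J$. The hypothesis that $D$ is a union of \emph{entire} connected components of $F$ must be used precisely at this point, and your write-up never invokes it beyond noting that $D$ is open. The correct step is: if some $p\in\partial D$ belonged to $F$, let $W$ be the Fatou component containing $p$; since $W$ is open and $p\in\overline D$, $W$ meets $D$, and because $D$ is a union of components of $F$ and distinct components are disjoint, $W$ must be one of the components making up $D$, so $W\subset D$ and $p\in D$, contradicting $\partial D\cap D=\emptyset$. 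Hence $\partial D\cap F=\emptyset$, i.e. $\partial D\subset J$. With this sentence inserted your proof is complete and coincides with the paper's (terse) argument.
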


\begin{proof}
The boundary of $D$ is completely invariant and nonempty and contained in $J.$
\end{proof}

\begin{theorem}
The Julia set contains no isolated point.
\end{theorem}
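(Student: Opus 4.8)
The plan is to rule out isolated points of $J$ by a case analysis on the size of the Julia set, using the density statement $J = \overline{\bigcup P^{-n}(z)}$ for any $z \in J$ together with complete invariance. First I would dispose of the trivial possibility that $J$ is finite: if $J$ were finite it would be a finite completely invariant set, and one checks directly (using that $P$ has degree $d \geq 2$) that this forces $P$ to be conjugate to $a(z-z_0)^d + z_0$, whose Julia set is a circle, hence infinite --- a contradiction. So $J$ is infinite. One should also note $J$ has no isolated points iff $J$ is perfect; since $J$ is already closed (indeed compact), it suffices to show every point of $J$ is a limit of other points of $J$.

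Now suppose toward a contradiction that $w_0 \in J$ is isolated, so $\{w_0\}$ is open in $J$. Since $J$ is infinite, pick a point $z \in J$ with $z \neq w_0$. By the theorem above, $J = \overline{\bigcup_n P^{-n}(z)}$, so $w_0$ is a limit of points in $\bigcup_n P^{-n}(z)$; but $w_0$ is isolated in $J$ and all those preimages lie in $J$, so in fact $w_0 \in \bigcup_n P^{-n}(z)$, i.e. $P^N(w_0) = z$ for some $N \geq 1$. The idea now is to run the same density argument with base point $w_0$ itself: since $w_0 \in J$, we have $J = \overline{\bigcup_n P^{-n}(w_0)}$, so $\bigcup_n P^{-n}(w_0)$ accumulates on all of $J$, in particular somewhere other than at $w_0$ (as $J$ is infinite). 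Hence there is a point $q \neq w_0$ and an $m \geq 1$ with $P^m(q) = w_0$, and by taking $m$ minimal and iterating backward we may assume $q$ lies arbitrarily close to some point of $J$; the cleaner route is: because $J$ is infinite and equals the closure of $\bigcup_n P^{-n}(w_0)$, that backward orbit is an infinite set, so it accumulates at a point $\zeta \in J$.

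The main obstacle --- and the crux of the argument --- is to convert "the full backward orbit of $w_0$ is infinite and accumulates somewhere" into "points of $J$ accumulate at $w_0$." For this I would argue as follows. Applying $P^N$ (recall $P^N(w_0) = z$) to the backward orbit of $w_0$: if $\zeta$ is an accumulation point of $\bigcup_n P^{-n}(w_0)$, then by continuity $P^N(\zeta)$ is an accumulation point of $P^N\bigl(\bigcup_n P^{-n}(w_0)\bigr) \subset \bigcup_n P^{-n}(z) \cup \{\text{finitely many points}\}$, which forces $P^N(\zeta) = z = P^N(w_0)$ up to the finitely many exceptions. The efficient packaging avoids tracking $z$ at all: I claim directly that if $w_0$ were isolated in $J$, then every preimage $P^{-1}(w_0) \cap J$ would be isolated in $J$ too --- because $P$ is a local homeomorphism away from its finitely many critical points, and near a critical point it is still a proper finite-to-one open map, so the preimage of an isolated point of a closed invariant set is again a discrete subset of that set. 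Iterating, $\bigcup_n P^{-n}(w_0)$ would be a discrete subset of the compact set $J$, hence finite. But a finite completely-invariant-closure set again forces $P$ to be of the exceptional form $a(z-z_0)^d+z_0$ with circle Julia set, contradiction. Thus no point of $J$ is isolated, so $J$ is perfect. The one place requiring care is the behavior at critical points: I must confirm that a critical point of $P$ lying over an isolated point of $J$ still has only finitely many, and indeed isolated-in-$J$, preimages, which follows since $P^{-1}(w_0)$ is a finite set (at most $d$ points) regardless.
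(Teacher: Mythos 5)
Your preparatory steps are fine: the finiteness argument for $J$ (a finite completely invariant set forces the exceptional form $a(z-z_0)^d+z_0$), and the claim that each point of $P^{-1}(w_0)\cap J$ is isolated in $J$ whenever $w_0$ is (take a neighborhood $V$ with $V\cap J=\{w_0\}$; then $P^{-1}(V)\cap J\subset P^{-1}(w_0)$, a set of at most $d$ points). The fatal step is the final one: ``$\bigcup_n P^{-n}(w_0)$ would be a discrete subset of the compact set $J$, hence finite.'' A discrete subset of a compact set need not be finite --- it is finite only if it is also closed. The standard counterexample is $\{1/n:n\geq 1\}$ inside $\{0\}\cup\{1/n:n\geq 1\}$: every point of the subset is isolated in the ambient compact set, yet the subset is infinite. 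Worse, in your own setting the conclusion cannot possibly be reached this way: by the density theorem you quote at the start, $J=\overline{\bigcup_n P^{-n}(w_0)}$, and you have already shown $J$ is infinite, so the backward orbit is necessarily infinite. So the situation ``$J$ infinite, a dense subset of $J$ consisting of points isolated in $J$'' is not topologically contradictory (the example above realizes it), and some further dynamical input is required to close the argument. Your earlier paragraph (mapping an accumulation point $\zeta$ forward by $P^N$) gestures at such input but is left vague and is then abandoned in favor of the flawed packaging.

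Two honest repairs exist. One stays close to your framework: prove the \emph{forward} analogue of your lemma --- if $w_0$ is isolated in $J$ then so is $P(w_0)$, using that $P$ is an open map together with $P^{-1}(J)=J$ --- then pick a non-isolated point $\zeta\in J$ (it exists because $J$ is infinite and compact), apply $J=\overline{\bigcup_n P^{-n}(\zeta)}$ to conclude $P^N(w_0)=\zeta$ for some $N$, and contradict the forward lemma. The paper instead avoids the whole issue by choosing the base point of the density argument more carefully: it takes $z_1\in P^{-1}(z_0)$ with $z_1\neq z_0$ (splitting into the cases $z_0$ non-periodic and $z_0$ periodic, the latter after replacing $P$ by $P^N$ and ruling out $P^{-1}(z_0)=\{z_0\}$ since that would make $z_0$ superattracting), so that the preimages of $z_1$ accumulating at $z_0$ are automatically different from $z_0$. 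As written, your proof has a genuine gap at its crux.
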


\begin{proof}
Assume that $z_0\in J$ is isolated in $J.$ We divide into two cases. Suppose first that $z_0$ is not
a periodic point. Pick a neighborhood $U(z_0).$\\

 We will show that there is a $w\in U\setminus z_0$
with $w\in J.$ Consider a point $z_1, P(z_1)=z_0.$ Then $z_1\in J$ and $z_1\neq z.$ Then there must exist a point $w\in U$ so that some iterate $P^n(w)=z_1$ and hence $P^{n+1}(w)=z_0$. Since $z_0$ is not periodic, $w\neq z_0.$

Next assume that $z_0$ is a periodic point. Replacing $P$ by $P^N$ for some $N$ we can assume that
$P(z_0)=z_0.$ If $P^{-1}(z_0)=\{z_0\},$ then $z_0$ is a fixed critical point, so cannot be in the Julia set.
Hence we can find $z_1\neq z_0$ with $P(z_1)=z_0.$ Then $z_1$ is in the Julia set, and again there must  be as above points $w$ close to $z_0$ which are mapped under iteration to $z_1$. Since $z_0$ is fixed we must again have that $w\neq z_0.$ 
\end{proof}

We have shown before that there are at most $d-1$ attracting orbits 
(in addition to the attracting point at infinity).

Next we want to estimate the number of  neutral periodic orbits. The basic idea is the perturb the polynomial so at least about half of the neutral points become attracting. Then there can be at most $2(d-1)$ such points.

To perturb, we observe first that for a polynomial $P(z)=z^d$ there are no periodic orbits with $|(P^n)'|=1.$ Next let $P(z)=a_dz^d+\cdots$ be a polynomial with at least one  neutral periodic orbit.
We can conjugate with $w=cz$, which does not change the derivative at fixed or periodic points,
and then we can assume that $P(z)=z^d+\cdots.$
Let $R(z,t)=R_z(t)=(1-t)P+tz^d, z,t \in \mathbb C.$ For $t=0,$ this is the polynomial $P$ and for $t=1$ this is the polynomial $z^d.$ 
The condition that $z_0$ is periodic of order $m$ is that $R^m_t(z_0)=z_0.$ The condition that the multiplier is $1$ is that $\frac{\partial}{\partial z}R^m_t(z_0)=1.$
Let $Z_m$ denote the zero set of the equation $R^m(z,t)-z$ and $X_m$ the zero set of
$\frac{\partial}{\partial z}T^m(z,t)-1.$ These are possibly singular Riemann surfaces in $\mathbb C^2$.
For each $t$ there are $d^m$ respectively $d^m-1$ points $z$ on these varieties, counted with muliplicity. \\

We note that for the value $t=1$, there is no $z$ belonging to both sets.
This implies the collection of $t$ values where the curves intersect is locally finite.

Suppose now that for some value $t=c$ we have a $z_0$ which is periodic of order exactly $m$ and
with multipler $\lambda \neq 1. $ We can then use the implicit function theorem to describe the
solutions to $R^m(z,t)-z=0$. We get a unique solution $z=z(t)$ for $t$ close to $c$ and $z(c)=z_0.$
The graph is inside the Riemann surface $Z^m$ and we can continue along this curve, avoiding intersection points between $Z^m$ and $X^m$ and branch points of $Z^m$ over the t axis.
This can be continued to the value $t=1$. Also the multiplier $\lambda(t)$ will be analytic along this curve. 

Suppose we start at $t=0$ with a neutral periodic orbit, $|\lambda|=1, \lambda \neq 1.$ Since the multiplier at $t=1$ does not have modulus 1, the analytic function $\lambda(t)$ cannot be constant. Hence if we move away from $t=0$ in  $1/2$ of the directions, the value of $|\lambda|$ will be strictly less than one, so the periodic orbit $z(t)$ will be attracting.

Suppose we start at $t=0$ with a periodic point $z_0$ of order $m$ with $\lambda=1.$ In this case $Z^m$ might be singular at $(0,z_0).$ In this case, we can still parametrize $Z_m$ by
$t=\tau^k, z= z_0+\sum_{j \geq 1} a_j \tau^j$ in a neighborhood. [Puisseux series][There might be finitely many of these through $(0,z_0)$, just pick one.] The multiplier in this case is a holomorphic function $\lambda(\tau)$ with $\lambda(0)=1.$ Still it must be nonconstant because we can still anaytically continue to $t=1.$ Hence there will be half of the angles in $\tau$ where the orbit becomes attracting. This implies that at least half the directions in $t$ space will become attracting.

Suppose you have $2d$ neutral periodic orbits for $P.$ Then there must be some angle where
$d$ of them are attracting. This is impossible. So there are at most $2d-1$  neutral periodic orbits. Since in addition there might be $d$ orbits with $|\lambda|<1$ (including the point at infinity), 
we can conclude:

\begin{theorem}
A polynomial $P(z)$ of degree $d$ can have at most $3d-1$ periodic orbits which are {\bf not} repelling.
\end{theorem}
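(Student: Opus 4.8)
\bigskip

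The plan is to split the non-repelling periodic orbits of $P$ into the attracting ones (counting the superattracting fixed point at $\infty$) and the neutral ones, and to bound the two families by $d$ and $2d-1$ respectively. The attracting bound is already available: by the theorem that an attracting fixed point has a critical point in its immediate basin, applied to $P^m$ and combined with $(P^m)'=\prod_{j=0}^{m-1}P'\circ P^j$, every attracting cycle forces one of the $d-1$ finite critical points of $P$ to be iterated into its immediate basin, and distinct cycles have disjoint basins; hence there are at most $d-1$ finite attracting orbits, plus $\infty$, for a total of at most $d$. So it remains only to bound the number of finite neutral orbits by $2d-1$.

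For the neutral orbits I would use the homotopy set up above. After a conjugation $w=cz$ (which does not change multipliers) we may assume $P$ is monic, so that $R_t:=(1-t)P+tz^d$ is monic of degree $d$ for every $t$, with $R_0=P$ and $R_1=z^d$; note $\infty$ stays superattracting for every $R_t$. First observe that $R_1=z^d$ has no neutral cycle: a periodic point $z\ne 0$ of period $m$ satisfies $|z|^{d^m}=|z|$, hence $|z|=1$, and then the cycle multiplier has modulus $d^m>1$, so it is repelling. Now fix a neutral cycle of $P$ of exact period $m$ and multiplier $\lambda$, $|\lambda|=1$. If $\lambda\ne 1$, the implicit function theorem gives an analytic branch $z(t)$ of $Z_m$ through $(0,z_0)$ with analytic multiplier $\lambda(t)$; continue it along a path in the $t$-plane from $0$ to $1$ avoiding the (locally finite, by the remark on $t=1$) set of $t$ over which $Z_m$ branches or meets $X_m$. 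Since the multiplier at $t=1$ has modulus $\ne1$, $\lambda(t)$ is non-constant, so $u(t):=\log|\lambda(t)|$ is a non-constant harmonic function near $t=0$ with $u(0)=0$; writing $u=\mathrm{Re}\,h$ with $h$ analytic, $h(0)=0$, $h(t)=ct^k+\cdots$, $c\ne0$, the set of directions $\{\theta:\mathrm{Re}(ce^{ik\theta})<0\}$ is open of Lebesgue measure exactly $\pi$, and along each such ray the continued cycle $z(t)$ is attracting for $R_t$ once $|t|$ is small. If instead $\lambda=1$ the curve $Z_m$ may be singular at $(0,z_0)$, but a Puiseux branch $t=\tau^\kappa$, $z=z_0+\sum_{j\geq1}a_j\tau^j$ again carries a non-constant analytic multiplier $\lambda(\tau)$ with $\lambda(0)=1$, and the same harmonic-function dichotomy in $\tau$, transported through $t=\tau^\kappa$, yields an open set of $t$-directions of measure at least $\pi$ along which the cycle is attracting.

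The endgame is a pigeonhole argument. Suppose $P$ had $2d$ distinct finite neutral orbits. For each of them we obtain, as above, an open set $G_i\subseteq S^1$ of directions of measure $\geq\pi$ such that for $\arg t\in G_i$ and $|t|$ small the $i$-th orbit continues to an attracting orbit of $R_t$, of the same period and hence distinct from the others for $|t|$ small. Then $\int_{S^1}\sum_i\mathbf 1_{G_i}\,d\theta=\sum_i|G_i|\geq 2d\pi$, while the integrand is an integer supported on a circle of total measure $2\pi$; were it everywhere $\leq d-1$ the integral would be at most $(2d-2)\pi$, a contradiction. Hence there is a positive-measure set of directions $\theta_0$ at which at least $d$ of the $G_i$ overlap; choosing such a $\theta_0$, the corresponding $d$ orbits, and one radius $|t|$ small enough for all of them, we get a polynomial $R_t$ of degree $d$ with $d$ distinct finite attracting orbits, which is impossible since a degree-$d$ polynomial has at most $d-1$ of them. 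Therefore $P$ has at most $2d-1$ finite neutral orbits, and adding the at most $d-1$ finite attracting orbits and the one at $\infty$ gives at most $3d-1$ non-repelling periodic orbits in all.

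The step I expect to be the real work is the analytic continuation bookkeeping in $t$-space: justifying carefully that the branch locus of $Z_m$ over the $t$-line together with the intersection locus $Z_m\cap X_m$ is locally finite (so that a suitable path from $0$ to $1$ exists), treating the singular Puiseux case uniformly with the smooth one — in particular checking that the $\kappa$ points over a nearby $t$ really do belong to attracting orbits of $R_t$ lying near the orbit of $z_0$ — and verifying that distinct neutral orbits of $P$ perturb to distinct finite orbits of $R_t$, so that the final count of at least $d$ attracting orbits is legitimate. The harmonic-function dichotomy and the measure-theoretic pigeonhole are then routine.
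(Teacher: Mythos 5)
Your proposal is correct and follows essentially the same route as the paper: the same homotopy $R_t=(1-t)P+tz^d$, continuation of each neutral cycle via the implicit function theorem or a Puiseux branch so that the multiplier is a nonconstant analytic function forced to leave the unit circle, the observation that half of the directions at $t=0$ make the cycle attracting, and the count that $2d$ neutral orbits would produce $d$ simultaneous finite attracting orbits for some nearby degree-$d$ polynomial, contradicting the critical-point bound of $d-1$, giving $2d-1$ neutral plus at most $d$ attracting (with $\infty$) $=3d-1$. Your harmonic-function measure argument and explicit pigeonhole merely make precise the paper's "half of the directions" and "some angle where $d$ of them are attracting" steps.
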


\begin{theorem}
The Julia set is the closure of the repelling periodic orbits.
\end{theorem}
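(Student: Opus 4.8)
The plan is to establish the two inclusions of $J=\overline{\{\text{repelling periodic points}\}}$ separately. The inclusion $\overline{\{\text{repelling periodic points}\}}\subseteq J$ is immediate: if $z_0$ is periodic of exact period $m$ with multiplier $\lambda=(P^m)'(z_0)$ of modulus greater than $1$, then $(P^{mk})'(z_0)=\lambda^k\to\infty$ while $P^{mk}(z_0)=z_0$, so no subsequence of $\{P^{mk}\}$ can converge locally uniformly near $z_0$; hence $z_0\notin F(P^m)=F(P)$, i.e., $z_0\in J$, and $J$ is closed. The real content is the density of repelling periodic points in $J$.

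I would reduce that to the following claim: if $z_0\in J$ is neither periodic nor a critical value of $P$, then every neighbourhood of $z_0$ contains a periodic point of $P$. Call such $z_0$ \emph{good}. Good points are dense in $J$: the critical values of $P$ together with the periodic points form a countable set, whereas $J$, having no isolated points, is perfect and so every nonempty relatively open subset of $J$ is uncountable; removing a countable set therefore leaves a dense subset of $J$. Granting the claim, a good point $z_0$ is an accumulation point of periodic points, all distinct from $z_0$ (since $z_0$ is not periodic), so $z_0$ is the limit of infinitely many distinct periodic points; by the theorem just proved only finitely many periodic points fail to be repelling, hence all but finitely many of these are repelling and $z_0$ lies in the closure of the repelling periodic set. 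Since good points are dense in $J$, so are the repelling periodic points, which together with the inclusion of the first paragraph gives the equality.

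It remains to prove the claim, and this normal-families argument is where the real work lies. Fix a good $z_0$ and a disc $U\ni z_0$ on which a holomorphic branch $\phi$ of $P^{-1}$ is defined — possible because $z_0$ is not a critical value of $P$ — and shrink $U$ so that $\overline{\phi(U)}\cap\overline{U}=\emptyset$; then $z_0$, $\phi(z_0)$ and $P(z_0)$ are three distinct points, since $z_0$ is not periodic. Suppose, toward a contradiction, that on some disc $U'\subset U$ around $z_0$ we have $P^n(z)\neq z$ and $P^n(z)\neq\phi(z)$ for every $z\in U'$ and every $n\geq1$. Then
$$ g_n(z):=\frac{P^n(z)-z}{P^n(z)-\phi(z)} $$
is holomorphic on $U'$ and omits the values $0$ (because $P^n(z)\neq z$ there) and $1$ (because $z\neq\phi(z)$ on $U'$), so by Montel's theorem — which follows from the fact, recalled earlier, that the universal cover of $\mathbb C\setminus\{0,1\}$ is the disc — the family $\{g_n\}$ is normal on $U'$. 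Write $M_z$ for the Möbius map $w\mapsto\frac{\phi(z)w-z}{w-1}$; for $z\in U'$ it is non-degenerate (its determinant is $z-\phi(z)\neq0$) and depends continuously on $z$, and the definition of $g_n$ rearranges to $P^n(z)=M_z(g_n(z))$. Given any subsequence of $\{P^n\}$, the corresponding subsequence of $\{g_n\}$ has, by normality, a locally uniformly convergent sub-subsequence $g_{n_k}\to g$ in the spherical metric; then, by uniform continuity of $(z,w)\mapsto M_z(w)$ on compact subsets of $U'\times\overline{\mathbb C}$, $P^{n_k}(z)=M_z(g_{n_k}(z))\to M_z(g(z))$ locally uniformly near $z_0$, so $\{P^n\}$ is normal near $z_0$, contradicting $z_0\in J$. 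Hence there are $n\geq1$ and $z\in U'$ with $P^n(z)=z$, or with $P^n(z)=\phi(z)$ and therefore $P^{n+1}(z)=P(\phi(z))=z$; in either case $U'$ contains a periodic point, and since $U'$ was an arbitrary small disc around $z_0$ this proves the claim. I expect the main obstacle to be precisely this step — realizing $\{g_n\}$ as a two-value-omitting family and then controlling the $z$-dependence of the Möbius substitution well enough to transfer normality back to $\{P^n\}$; the other ingredients (perfectness of $J$ and the bound on non-repelling periodic orbits) are quoted directly from the results above.
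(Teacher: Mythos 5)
Your proof is correct and follows essentially the same route as the paper: a Montel/normal-families contradiction on a neighborhood assumed free of periodic points, combined with the finiteness of non-repelling periodic orbits and the perfectness of $J$. The only differences are cosmetic: you use the single-branch auxiliary family $g_n(z)=\frac{P^n(z)-z}{P^n(z)-\phi(z)}$ (producing a periodic point of period $n$ or $n+1$) where the paper uses two inverse branches $f_1,f_2$ with the correcting factor $\frac{w-f_2(w)}{w-f_1(w)}$, and you obtain suitable base points densely via countability of periodic points and critical values rather than by perturbing the point $z$ as the paper does.
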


\begin{proof}
Let $z\in J$ and $U(z)$ a neighborhood of $z.$ We want to show that there is a repelling periodic point in $U.$ Since $z$ is not isolated in $J$ and there are only finitely many nonrepelling periodic orbits,
we can assume that all periodic points in $U$ are repelling. Assume also that there are no repelling periodic points in $U.$ We can further move to another $z$ if necessary and assume there are no critical values in $U$. Then we can assume there are two preimages $z_1,z_2$ of $z$ and
that there are two inverses $f_1$ and $f_2$ of $P$ defined on $U$, $z_j=f_j(z).$ 
We can assume $f_j(U),U$ are three disjoint sets.

Define the functions $g_n(w)=\frac{P^n(w)-f_1(w)}{P^n(w)-f_2(w)}\frac{w-f_2(w)}{w-f_1(w)}.$
None of the 4 expressions used can vanish at any point in $U$. Hence the functions $g^n$ cannot take the value $0$ or $\infty$ on $U.$ Suppose that there is some value of $w$ and some $n$ for which
$g^n(w)=1.$ Then
$$
P^n(w)w-f_1(w)w-P^n(w)f_2(w)+f_1f_2=P^nw-f_2w-P^nf_1+f_1f_2
$$
so $(f_2-f_1)w= (f_2-f_1)P^n$ and hence $P^n(w)=w$ which is impossible since there are no
periodic points in $U.$ Hence the functions $g^n$ is a normal family on $U.$

This implies that $h_n=\frac{P^n-f_1}{P^n-f_2}$ is a normal family.
Since $P^n=\frac{f_2 h_n-f_1}{h_n-1}, $ it follows that $P^n$ is a normal family, a contradiction.

\end{proof}


\section{INVARIANT MEASURES}

ORIGINAL TEXT: 
 DEUX CARACTERISATIONS DE LA MEASURE DÉQUILIBRE D'UN ENDOMORPHISME
DE $P^k.$ by Jean-Yves Briend and Julien Duval.
But these lecture notes will give all details.\\

We will find probability measures that describe the dynamics on the Julia set. The key property is that of invariance. So let $\mu$ be a probability measure on $\overline{\mathbb C}.$ 
So for any Borel set $B$, $0\leq \mu(B) \leq 1=\mu(\overline{\mathbb C}).$
The property of invariance is that $\mu(P^{-1}(B))=\mu(B)$ for all Borel sets.
There are many such measures: For example, if $z_0$ is a fixed point, then the Dirac mass at $z_0$,
$\mu=\delta_{z_0}$ is such a measure: $\mu(B)=0$ if $z_0\notin B$, and $\mu(B)=1$ if $z_0\in B.$
Another example is $d\theta/(2\pi)$ on the unit circle and $P(z)=z^2.$
The second example has a stronger property: It is an equilibrium measure: Namely, if
we take a small arc of angle $\epsilon$ the preimage is two arcs {\bf with the same length},
$\epsilon/2.$  Note that the dirac mass at $\infty$ is such an invariant equilibrium measure.
For the map $P(z)=z^d$ the Dirac mass at the other exceptional point, $\delta_0$ is also
an equilibrium measure.

\begin{theorem}
Let $P$ be a polynomial of degree $d\geq 2.$ Then there is a unique equilibrium measure $\mu$
which gives no mass to the exceptional set. 
\end{theorem}

We will make some preparations. Let $|A|$ denote the area of a set $A.$

\begin{theorem} (Koebe distortion theorem)
Let $0<s<1$. Then there is a constant C so that if $f$ is any 1-1 holomorphic function $f:\Delta \rightarrow \Delta$, the unit disc
with $f(0)=0,$ then $\sup_{|z|=s} |f(z)|^2\leq C |f(\Delta)|$
\end{theorem}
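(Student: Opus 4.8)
The plan is to reduce the stated inequality to the classical Koebe one-quarter theorem together with the standard Koebe distortion bounds for schlicht functions. First I would renormalize: given a $1$-$1$ holomorphic $f:\Delta\to\Delta$ with $f(0)=0$, set $g(z)=f(z)/f'(0)$, so that $g$ is schlicht (i.e. $g(0)=0$, $g'(0)=1$). The classical distortion theorem gives, for $|z|=s$,
\[
|g(z)|\le \frac{s}{(1-s)^2},
\]
hence $\sup_{|z|=s}|f(z)|\le |f'(0)|\,\dfrac{s}{(1-s)^2}$, and therefore
\[
\sup_{|z|=s}|f(z)|^2\le |f'(0)|^2\left(\frac{s}{(1-s)^2}\right)^2 .
\]
So it suffices to bound $|f'(0)|^2$ by a constant multiple of $|f(\Delta)|$, the area of the image.

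For the area lower bound I would use the Koebe one-quarter theorem applied to $g$: the image $g(\Delta)$ contains the disc of radius $1/4$ about $0$, so $f(\Delta)=f'(0)\cdot g(\Delta)$ contains the disc of radius $|f'(0)|/4$ about $0$. Taking areas,
\[
|f(\Delta)|\ge \pi\left(\frac{|f'(0)|}{4}\right)^2=\frac{\pi}{16}\,|f'(0)|^2 .
\]
Combining this with the displayed distortion bound yields
\[
\sup_{|z|=s}|f(z)|^2\le \left(\frac{s}{(1-s)^2}\right)^2\cdot\frac{16}{\pi}\,|f(\Delta)|,
\]
so the theorem holds with $C=C(s)=\dfrac{16}{\pi}\left(\dfrac{s}{(1-s)^2}\right)^2$, which is finite for each fixed $s\in(0,1)$ and blows up as $s\to 1$, as one would expect.

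The only genuine inputs are the one-quarter theorem and the growth bound $|g(z)|\le |z|/(1-|z|)^2$ for schlicht $g$; both are standard and I would simply cite them (e.g. from Milnor's book or any text on univalent functions), rather than reprove them here. There is no real obstacle: the main point to be careful about is that the hypothesis $f(\Delta)\subset\Delta$ is not actually needed for the upper bound on $\sup_{|z|=s}|f|$ (that comes purely from univalence and $f(0)=0$ via the distortion theorem), but it does guarantee $|f'(0)|\le 1$ by the Schwarz lemma, which is harmless; the essential content is the comparison of the boundary growth with the image area, and that is exactly what the one-quarter theorem supplies.
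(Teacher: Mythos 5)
Your proof is correct, but it takes a genuinely different route from the one in the notes. You reduce the statement to two classical facts about schlicht functions: the growth bound $|g(z)|\le |z|/(1-|z|)^2$ and the Koebe one-quarter theorem, the latter giving the area lower bound $|f(\Delta)|\ge \tfrac{\pi}{16}|f'(0)|^2$; combined, these yield the explicit constant $C(s)=\tfrac{16}{\pi}\bigl(\tfrac{s}{(1-s)^2}\bigr)^2$, and you correctly note that $f'(0)\neq 0$ by univalence and that the hypothesis $f(\Delta)\subset\Delta$ is not really needed since both sides scale the same way. The notes instead give a completely elementary, self-contained argument: write $f'(z)$ by the Cauchy integral formula on circles $|\zeta|=r$ with $t\le r\le 1$, average over $r$ to bound $\sup_{|z|\le s}|f'|$ by $\tfrac{1}{(t-s)(1-t)}\int_\Delta|f'|\,dx\,dy$, apply Cauchy--Schwarz, and use the change-of-variables identity $\int_\Delta |f'|^2\,dx\,dy=|f(\Delta)|$ (this is where injectivity enters), then integrate $f'$ radially from $0$ to $|z|=s$. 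The trade-off is clear: your argument is shorter and gives a clean explicit constant, but it rests on the distortion/growth and one-quarter theorems, which in these notes are only proved much later (via the area theorem, at the end of the Fatou-set section) — so as a proof at this point in the text it is not self-contained unless you cite external sources, as you propose. The paper's proof uses nothing beyond the Cauchy integral formula, Cauchy--Schwarz, and the Jacobian identity, which is precisely why it is presented this way here, and it adapts immediately to the unbounded-target variant $\sup_{|z|,|w|\le s}|f(z)-f(w)|\le C\sqrt{\operatorname{Area}(f(\Delta))}$ stated later as an exercise.
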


\begin{proof}
Pick a number $t$, $s<t<1.$ For any $r, t\leq r \leq  1$ and any $z,|z|\leq s$ we have that
\bea
f'(z) & = & \frac{1}{2\pi i} \int_{|\zeta|=r } \frac{f'(\zeta)}{\zeta-z}d\zeta\\
& \Rightarrow &\\
|f'(z)| & \leq & \frac{1}{t-s}\int_0^{2\pi} |f'|r d\theta\\
& \Rightarrow & \\
|f'(z)| & \leq & \frac{1}{(t-s)(1-t)} \int_0^{2\pi} \int_t^1 |f'(\zeta)| rdrd\theta\\
& \leq & \frac{1}{(t-s)(1-t)} \int\int_{ |\zeta|<1} |f'(\zeta)| dxdy\\
& \leq & C \left(\int_{|\zeta|<1} |f'(\zeta)|^2 dxdy\right)^{1/2} \left( \int_{|\zeta|<1}  dxdy\right)^{1/2}\\
\eea
Here we have used the Cauchy-Schwartz inequality and the fact the real Jacobian of the holomorphic function $f(z)$ is $|f'(z)|^2.$ 

Integrating $f'$ from $0$ to $\{|z|=s\}$ we get the desired estimate for $f(z).$

\end{proof}


{\bf Exercises}\\

\begin{exercise}
Let $X=\{(t,z)\in \mathbb C^2; z^2-t^3=0\}$. Show that this set can be parametrized in the form
$t=\tau^k, z=f(\tau)$. Find the smallest possible $k$ and the function $f(\tau).$
\end{exercise}

\begin{exercise}
Let $P(z)=z^3.$ Show that the measures $\delta_0,\delta_\infty$ and $\mu=\frac{d\theta}{\pi}$ on the
unit circle are invariant probability measures. Show that  if $L$ is a small arc on the unit circle,
then $P^{-1}(L)$ consists of three arcs whose total length is the same as for $L$, and the three arcs have all the same length. 
\end{exercise}

\begin{exercise}
Let $P$ and $Q$ be conjugate, $\phi \circ P=Q\circ \phi.$ Show that if $z_0$ is a fixed point
for $P$, then $w_0=\phi(z_0)$ is a fixed point for $Q$ and that $Q'(w_0)=P'(z_0).$
\end{exercise}

\newpage



Let $C$ denote the critical points, and $V$ the image of the critical points.
Set $V_\ell:= \cup_{q=1}^\ell P^q(C)$, the critical values of $f^\ell$ and $V_\infty = \cup_{\ell \geq 1} V_\ell$ the postcritical set. 

\begin{lemma} (Lyubich)
Let $\epsilon >0.$ There exists $\ell=\ell(\epsilon)>0$ so that for any topological discs  $D\subset \subset \tilde{D}\subset \subset \mathbb C$ which do not intersect $V_\ell$ there are at least $(1-\epsilon)d^n$ inverse branches $g_i$ of $f^n $ on $\tilde{D}$ for $n>n(\epsilon, D,\tilde{D})$ sufficiently large
with images $\Delta_i^n=g_i(D)$ with diameter at most $c d^{-n/2}$, $c$ is independent of $n.$ 
\end{lemma}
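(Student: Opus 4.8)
The plan is to combine the normality estimates coming from the universal cover of $\mathbb{C}\setminus\{0,1\}$ (more generally, of $\mathbb{C}$ minus finitely many points) with a counting argument on the total area of the inverse branches, and then to upgrade area control to diameter control via the Koebe distortion theorem stated above. First I would fix $\epsilon>0$ and choose $\ell=\ell(\epsilon)$ as follows. Since $f^q$ has at most $d-1$ critical points for each $q$, the set $V_\ell$ has at most $(d-1)\ell$ points; I want $\ell$ large enough that \emph{some} intermediate level, i.e. the relative density of new postcritical points must thin out. The key point is: among the $d^n$ branches of $f^{-n}$ over $\tilde D$, a branch $g_i$ fails to be single-valued (or fails to extend) only if one of the intermediate iterates $f^{j}\circ g_i(\tilde D)$, $1\le j\le n$, meets a critical value of $f$. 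One counts that the number of ``bad'' branches at step $j$ is bounded in terms of how many critical values lie in the relevant region, and since $D,\tilde D$ avoid $V_\ell$ the first $\ell$ steps contribute no obstruction; a telescoping/pigeonhole argument over levels then gives at least $(1-\epsilon)d^n$ good branches once $\ell=\ell(\epsilon)$ is chosen appropriately. This is essentially the standard argument that the proportion of branches meeting the postcritical set is small.

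Next, for the good branches: let $\Delta_i^n=g_i(D)$. Because $D\Subset\tilde D$ and $g_i$ is univalent on $\tilde D$, the Koebe distortion theorem (applied after uniformizing $\tilde D\cong\Delta$ and noting $D$ sits in a fixed subdisc $\{|z|\le s\}$) gives
\[
\bigl(\operatorname{diam}\Delta_i^n\bigr)^2 \;\le\; C\,|\Delta_i^n|,
\]
with $C$ depending only on $D,\tilde D$ (through $s$), not on $n$ or $i$. Now I would exploit the fact that distinct good branches have \emph{disjoint} images $\Delta_i^n$ (they are disjoint sheets of $f^{-n}$ over the same domain $D$), all contained in the bounded region $\{z: f^n(z)\in D\}$, which itself lies in a fixed large disc (the filled Julia set plus a bounded neighbourhood — here one uses that $f^n\to\infty$ uniformly off a bounded set, so all preimages of the bounded set $D$ stay in a fixed compact). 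Hence
\[
\sum_{i\ \mathrm{good}} |\Delta_i^n| \;\le\; \bigl|\,\{z : f^n(z)\in D\}\,\bigr| \;\le\; M
\]
for a constant $M$ independent of $n$. Since there are at least $(1-\epsilon)d^n$ good branches, the average area is at most $M d^{-n}/(1-\epsilon)$, so by Markov's inequality at least half of the good branches (hence at least $(1-2\epsilon)d^n$, and after relabelling $\epsilon$ this is fine) have $|\Delta_i^n|\le 2M d^{-n}/(1-\epsilon)$, whence $\operatorname{diam}\Delta_i^n\le c\,d^{-n/2}$ with $c=c(D,\tilde D,\epsilon)$.

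The main obstacle I expect is the first step: making precise and quantitative the claim that only an $\epsilon$-fraction of the $d^n$ inverse branches are obstructed by the postcritical set, uniformly in $D,\tilde D$ and in $n$ (for $n$ large depending on these). The honest way to run this is to bound, for each level $1\le j\le n$, the number of critical points of $f^{n-j+1}$ whose critical value lands near where the branches of $f^{-(j-1)}$ over $D$ live; since $f^m$ has $\le(d-1)d^{m-1}+\dots$ critical points but these are organized by the chain rule along orbits, the count of \emph{newly obstructed} branches at level $j$ is $O((d-1)d^{\,n-j})$ when the relevant region already avoids $V_\ell$, and $\sum_{j>\ell} (d-1)d^{\,n-j} = (d-1)d^{\,n}\sum_{j>\ell} d^{-j} \le d^{\,n}\cdot d^{-\ell}\cdot\frac{d-1}{d-1}$, which is $\le \epsilon d^n$ once $d^{-\ell}\le\epsilon$. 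Getting the bookkeeping exactly right (in particular, that the first $\ell$ levels are genuinely obstruction-free because $\tilde D\cap V_\ell=\emptyset$, and that one may discard the rare branches whose intermediate images grow large before returning) is the technical heart of the lemma; everything after that is the soft area-to-diameter passage via Koebe.
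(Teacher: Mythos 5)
Your proposal is correct and follows essentially the same route as the paper's proof: use the hypothesis $\tilde D\cap V_\ell=\emptyset$ to get all $d^\ell$ branches at level $\ell$, lose at most $O(d)$ branches per subsequent level (a geometric series giving at least $(1-\epsilon)d^n$ good branches once $d^{-\ell}\lesssim\epsilon$), then use disjointness of the images inside a fixed bounded set plus a pigeonhole/Markov bound to get area $O(R^2 d^{-n})$ for most branches, and finally the Koebe distortion theorem to pass from area to diameter. The bookkeeping (summing losses over levels versus the paper's forward recursion) differs only cosmetically.
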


\begin{proof}
We will fix a large  $\ell=\ell(\epsilon)$ below. 
We know that $f^\ell$ has $d^\ell$ well defined inverses of $\tilde{D}$ with images $\tilde{D}^\ell_i$.
At most $d$ of these discs can contain a point in $V$. All the other discs have $d$ preimages.
This creates at least $d(d^\ell-d)=d^{\ell+1}-d^2$ preimages of $f^{\ell+1}.$ 
Same way one gets at least $d(d^{\ell+1}-d^2-d)=d^{\ell+2}-d^3-d^2$ preimages of $f^{\ell+2}.$
We easily see that in general we get at least $d^{n}(1-\epsilon/2)$ preimages of $f^n$
if $\ell$ is chosen large enough. Now note that all the images must lie in a fixed set $\{|z|<R\}$ for all large $n$. Let $N$ be the number of discs with area $\geq \frac{2\pi R^2}{d^n\epsilon}.$ Then since they are disjoint, $N\frac{2\pi R^2}{d^n\epsilon}\leq \pi R^2.$ Hence $N<\frac{d^n\epsilon}{2}.$ So at least 
$d^n(1-\epsilon)$ of the discs must have area less than $\frac{2\pi R^2}{ d^{n}\epsilon}.$
The estimate on the diameter follows from the Koebe distortion theorem.
\end{proof} 

Let $x\in \overline{\mathbb C},$ For every $n\geq1$ we define 
a probability measure $\mu_{n,x}= \frac{1}{d^n} \sum_{z, P^n(z)=x} \delta_z$. Here we count $z$
with muliplicity, so there are $d^n$ points. In the case $x=\infty,$ we get $\mu_{n,\infty}=
\delta_\infty$. \\

We introduce a concept of convergence for measures:
A sequence of finite measures $\lambda_n$ converge {\bf weakly} to a finite measure $\lambda$
if for every continuous function $\phi$ on $\overline{\mathbb C}$ 
we have that $\int \phi d\lambda_n \rightarrow \int \phi d\lambda.$
Let $\mathcal{C}(\overline{\mathbb C})$ denote the space of continuous functions.
We can identify any measure with a point in ${\mathbb C}^{\mathcal C(\overline{\mathbb C})}$
where we map any measure $\mu$ to the point $\{\int \phi d\mu\}_\phi.$ The subset of probability measures is a compact subset. {We use the weak topology: Let $\phi_1,\dots,\phi_n$ be continuous functions and $U_1,\dots,U_n$ be open sets in ${\mathbb C}$. A basis for the topology is given by $\{z\}_\phi$ so that $z_{\phi_j}\in U_j$ for $j=1,\dots,n.$ In particular, any sequence of probability measures has a weakly convergent subsequence.

\begin{lemma}
If $\mu_{n,x}(y)\rightarrow 0$, then $\mu_{n,x}(P(y))\rightarrow 0.$
\end{lemma}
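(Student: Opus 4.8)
The plan is to reduce the statement to a single exact identity between atoms of consecutive measures, coming from the multiplicativity of local degrees under composition. Throughout I would view $P$ as a degree $d$ holomorphic self-map of $\overline{\mathbb C}$ and recall that $\mu_{n,x}(\{w\})$ is exactly $d^{-n}$ times the multiplicity of $w$ as a solution of $P^n(z)=x$; in other words it equals $d^{-n}\deg_w(P^n)$ when $P^n(w)=x$ and $0$ otherwise. The case $x=\infty$ is trivial and I would dispose of it first: then $\mu_{n,\infty}=\delta_\infty$, so the hypothesis $\mu_{n,\infty}(\{y\})\to 0$ forces $y\ne\infty$, hence $P(y)$ is finite and $\mu_{n,\infty}(\{P(y)\})=0$ for every $n$. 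So from now on $x$ is finite.

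The key step is the identity
$$\mu_{n+1,x}(\{y\})=\frac{\deg_y(P)}{d}\,\mu_{n,x}(\{P(y)\}),\qquad n\ge 1 .$$
If $P^{n+1}(y)\ne x$ both sides vanish: the left one because $y$ is then not a solution of $P^{n+1}(z)=x$, the right one because $P^n(P(y))=P^{n+1}(y)\ne x$. If $P^{n+1}(y)=x$, then also $P^n(P(y))=x$, and the composition law $\deg_y(P^{n+1})=\deg_y(P^n\circ P)=\deg_y(P)\cdot\deg_{P(y)}(P^n)$ gives the identity after dividing by $d^{n+1}$. (Equivalently, this is just the statement that $\mu_{n+1,x}$ is the normalized pullback $\sum_w \mu_{n,x}(\{w\})\,\mu_{1,w}$ of $\mu_{n,x}$, evaluated at the atom $\{y\}$, since $\mu_{1,w}(\{y\})$ is $0$ unless $w=P(y)$.)

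Finally, since $\deg_y(P)\ge 1$, the identity yields
$$\mu_{n,x}(\{P(y)\})=\frac{d}{\deg_y(P)}\,\mu_{n+1,x}(\{y\})\le d\,\mu_{n+1,x}(\{y\}).$$
If $\mu_{n,x}(\{y\})\to 0$ as $n\to\infty$, then the shifted sequence $\mu_{n+1,x}(\{y\})$ also tends to $0$, and the last inequality forces $\mu_{n,x}(\{P(y)\})\to 0$. There is no serious obstacle here; the only points requiring care are the identification of $\mu_{n,x}(\{w\})$ with a normalized multiplicity, the invocation of multiplicativity of the local degree under composition, and the disposal of the degenerate cases $P^{n+1}(y)\ne x$ and $x=\infty$.
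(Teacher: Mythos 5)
Your proof is correct, and the identity $\mu_{n+1,x}(\{y\})=\frac{\deg_y(P)}{d}\,\mu_{n,x}(\{P(y)\})$ is exactly right: both sides vanish unless $P^{n+1}(y)=x$, and in that case the composition law $\deg_y(P^{n+1})=\deg_{P(y)}(P^n)\cdot\deg_y(P)$ gives the claim after dividing by $d^{n+1}$; since $\deg_y(P)\ge 1$, the bound $\mu_{n,x}(\{P(y)\})\le d\,\mu_{n+1,x}(\{y\})$ follows and the lemma is immediate. The paper proves the same underlying transfer of multiplicities, but packages it differently: it argues by contradiction, extracting a subsequence $n_k$ along which $P^{n_k}(z)-x$ vanishes to order $N_k>\delta d^{n_k}$ at $P(y)$, and then uses the Lipschitz estimate $|P(w)-P(y)|\le C|w-y|$ near $y$ to conclude that $P^{n_k+1}(z)-x$ vanishes to order at least $N_k$ at $y$, forcing $\mu_{n_k+1,x}(\{y\})\ge \delta/d$. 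So the paper only establishes the one-sided inequality (order at $y$ for level $n+1$ is at least the order at $P(y)$ for level $n$), obtained analytically rather than via the local-degree composition formula, and it needs the subsequence bookkeeping of a contrapositive argument. Your version is cleaner: the exact identity removes the subsequences and estimates entirely, makes the constant ($d/\deg_y(P)$) explicit, and in fact yields the converse implication as well; the paper's route buys only the avoidance of explicitly invoking multiplicativity of local degrees, replacing it with a direct power-series estimate. No gaps: your handling of the degenerate cases $P^{n+1}(y)\ne x$ and $x=\infty$ is fine.
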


\begin{proof}
To find $\mu_{n,x}(z) $ we calculate the polynomial $P^n(z)-x$. For $z$ to have mass,
$P^n(z)-x$ must be zero at $z$. If the order of the zero is $r$, then the mass of
$z$ will be $r/d^n.$ Suppose that $\mu_{n,x}(P(y))$ does not go to zero when $n\rightarrow \infty$.
Then there must be some positive $\delta$ and a subsequence $n_k$ so that
$P^{n_k}(z)-x= \mathcal O((z-P(y))^{N_k})$ for $N_k>\delta d^{n_k}.$ Suppose next that $w\rightarrow y$. Then $|P(w)-P(y)|\leq C|w-y|.$ But then $|P^{n_k+1}(w)-x|=|P^{n_k}(P(w))-x|
\leq C' |P(w)-P(y)|^{N_k}\leq C'' |w-y|^{N_k}.$ This implies that $\mu_{n_k+1}(y)
\geq N_k/d^{n_k+1}\geq \frac{\delta}{d},$ so does not go to zero.
\end{proof}

\begin{corollary}
If $\mu_{n,x}(C)\rightarrow 0,$ then for any given $\ell,$ $\mu_{n,x}(V_\ell)\rightarrow 0$ when $n\rightarrow \infty.$
\end{corollary}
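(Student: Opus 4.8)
The plan is to iterate the preceding Lemma finitely many times, exploiting the fact that $C$ and each $P^q(C)$ are finite sets. First I would note that $C$ consists of at most $d-1$ points, so $\mu_{n,x}(C)=\sum_{c\in C}\mu_{n,x}(\{c\})$ is a finite sum of nonnegative terms; hence the hypothesis $\mu_{n,x}(C)\to 0$ forces $\mu_{n,x}(\{c\})\to 0$ for every individual critical point $c\in C$.

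Next I would prove by induction on $q$ that $\mu_{n,x}(\{P^q(c)\})\to 0$ as $n\to\infty$, for every $c\in C$ and every $1\le q\le \ell$. The base case $q=1$ is exactly the previous Lemma applied with $y=c$, using that $\mu_{n,x}(\{c\})\to 0$. For the inductive step, suppose $\mu_{n,x}(\{P^q(c)\})\to 0$; applying the Lemma with $y=P^q(c)$ yields $\mu_{n,x}(\{P(P^q(c))\})=\mu_{n,x}(\{P^{q+1}(c)\})\to 0$, which closes the induction.

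Finally, since $V_\ell=\bigcup_{q=1}^\ell P^q(C)$ is a finite set (of at most $\ell(d-1)$ points) and $\mu_{n,x}$ is a nonnegative, (sub)additive measure, we have $\mu_{n,x}(V_\ell)\le \sum_{q=1}^{\ell}\sum_{c\in C}\mu_{n,x}(\{P^q(c)\})$, a finite sum each of whose terms tends to $0$ by the previous step. Therefore $\mu_{n,x}(V_\ell)\to 0$. I do not expect any genuine obstacle here: the statement is a straightforward finite iteration of the Lemma, and the only points deserving a word of care are the reduction from $\mu_{n,x}(C)\to 0$ to the vanishing of the mass at each single critical point (immediate from finiteness of $C$ and nonnegativity of $\mu_{n,x}$) and the closing finite-union estimate.
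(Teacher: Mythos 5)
Your proof is correct and follows essentially the same route as the paper: the paper simply says to apply the Lemma $\ell$ times to each point of $C$ and use the finiteness of $C$, which is exactly your induction on $q$ together with the reduction to individual critical points and the closing finite-union estimate. Your write-up just makes those (routine) details explicit.
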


\begin{proof}
We apply the lemma $\ell$ times to each point in $C.$ Since there is a finite number of points in $C$
we are done.
\end{proof}

\begin{theorem}
Suppose that $x,y$ are two points so that $\mu_{n,x}(C),\mu_{n,y}(C)$ converge to $0$.
Then $\mu_{n,x}-\mu_{n,y}$ converges weakly to zero.
\end{theorem}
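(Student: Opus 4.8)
The plan is to compare $\mu_{n,x}$ and $\mu_{n,y}$ by connecting $x$ to $y$ through a fixed topological disc and transporting mass along inverse branches of $P^n$. First I would test weak convergence against a fixed smooth test function $\phi \in \mathcal{C}(\overline{\mathbb C})$; by density it suffices to handle $\phi$ of class $C^1$ (or even $C^\infty$) supported in $\mathbb C$, since $\mu_{n,\infty}=\delta_\infty$ trivially only if $x$ or $y$ is $\infty$, a case one disposes of separately. Fix $\epsilon>0$ and let $\ell=\ell(\epsilon)$ be the integer from Lyubich's lemma. The hypotheses $\mu_{n,x}(C),\mu_{n,y}(C)\to 0$ together with the Corollary give $\mu_{n,x}(V_\ell),\mu_{n,y}(V_\ell)\to 0$, so the bulk of the preimage mass of both $x$ and $y$ avoids $V_\ell$.

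Next I would choose topological discs $D \Subset \tilde D \Subset \mathbb C$ with $x,y \in D$ and $\tilde D \cap V_\ell=\emptyset$ — this is possible because $V_\ell$ is a fixed finite set, after first moving $x$ and $y$ slightly if needed (the set of $x$ for which $\mu_{n,x}$ behaves well is generic, and one can also exploit that changing the base point by a bounded amount changes $\mu_{n,x}$ controllably; alternatively just assume $x,y\notin V_\ell$ which one may arrange since $V_\ell$ is countable and the statement is about a limit). Apply Lyubich's lemma: for $n$ large there are at least $(1-\epsilon)d^n$ inverse branches $g_i$ of $P^n$ on $\tilde D$ with $\operatorname{diam} g_i(D)\le c\,d^{-n/2}$. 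Each such branch contributes one point $g_i(x)$ to the support of $d^n\mu_{n,x}$ and one point $g_i(y)$ to that of $d^n\mu_{n,y}$, and crucially $|g_i(x)-g_i(y)|\le \operatorname{diam} g_i(D)\le c\,d^{-n/2}$. Pairing these branches,
\begin{align*}
\left| \int \phi\, d\mu_{n,x} - \int \phi\, d\mu_{n,y} \right|
&\le \frac{1}{d^n}\sum_{i \text{ good}} |\phi(g_i(x))-\phi(g_i(y))| + \frac{2\|\phi\|_\infty}{d^n}\cdot(\text{remaining mass}) \\
&\le \|\nabla\phi\|_\infty \cdot c\,d^{-n/2} + 2\|\phi\|_\infty\bigl(\mu_{n,x}(\text{bad set})+\mu_{n,y}(\text{bad set})\bigr).
\end{align*}
The "remaining mass'' consists of the at most $\epsilon d^n$ branches not supplied by the lemma, plus whatever preimage mass of $x$ (resp.\ $y$) lies outside $\tilde D$ — but $x,y\in D\Subset\tilde D$ forces \emph{every} inverse branch of $P^n$ to be among those considered on $\tilde D$, so the only deficit is the $\le\epsilon d^n$ branches dropped by Lyubich together with multiplicity/critical corrections bounded by $\mu_{n,x}(V_\ell)+\mu_{n,y}(V_\ell)$. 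Hence the right-hand side is at most $C\epsilon$ for $n$ large, and letting $\epsilon\to0$ gives the claim.

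The main obstacle I anticipate is the bookkeeping of \emph{which} inverse branches exist and how they pair up: Lyubich's lemma produces $(1-\epsilon)d^n$ "good'' branches on $\tilde D$ with small image, but to pair $g_i(x)$ with $g_i(y)$ and to control the leftover mass I need to know that these good branches account for all but $\epsilon d^n$ of the total $d^n$ preimage-with-multiplicity of $x$, and that the excluded ones — including points of high multiplicity coming from critical behaviour — carry mass tending to $0$; this is exactly where $\mu_{n,x}(C),\mu_{n,y}(C)\to0$ and the Corollary enter, but matching "number of branches'' with "mass counted with multiplicity'' requires care when $x$ or $y$ happens to be a critical value of some $P^q$. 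A clean way around this is to first reduce to $x,y$ lying outside $V_\infty$ (possible since we only care about the limit and may perturb, or invoke that the conclusion for generic $x,y$ plus a separate density argument suffices for all non-exceptional $x,y$), so that all $d^n$ preimages are simple and the branch/mass correspondence is exact. The estimate on $|g_i(x)-g_i(y)|$ via the diameter bound, and the $C^1$ test-function reduction, are then routine.
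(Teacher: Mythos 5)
Your core pairing argument (Lyubich branches on a thin disc avoiding $V_\ell$, pairing $g_i(x)$ with $g_i(y)$ via the diameter bound $c\,d^{-n/2}$, and absorbing the $\le \epsilon d^n$ missing branches into a small mass term) is exactly the first half of the paper's proof, and for points $x,y\notin V_\ell$ it is correct as you state it. The genuine gap is your reduction to that case. The theorem is about two \emph{specific} points satisfying $\mu_{n,x}(C),\mu_{n,y}(C)\to 0$; such points can perfectly well lie in $V_\ell$ or even be critical values, in which case no disc $\tilde D\supset\{x,y\}$ disjoint from $V_\ell$ exists and Lyubich's lemma cannot be applied with $x,y$ in the base disc. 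Your proposed fixes are circular: "changing the base point by a bounded amount changes $\mu_{n,x}$ controllably" is (uniformly in $n$) essentially the statement being proved, and "the conclusion for generic $x,y$ plus a density argument" is asserted without any mechanism for transferring the conclusion from perturbed points back to the given $x,y$. The hypothesis $\mu_{n,x}(C)\to 0$ cannot be transported to a perturbed point either, so nothing in your argument covers the very case the hypothesis is there to handle.

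The paper closes this gap with a two-step argument that you are missing. First it proves your estimate for an \emph{arbitrary} pair $z,t\in\mathbb C\setminus V_\ell$ (joined by a thin disc avoiding the finite set $V_\ell$), getting $|\int\phi\,d\mu_{n,z}-\int\phi\,d\mu_{n,t}|\le 3\epsilon$ for $n$ large depending on $(z,t)$. Then, for the given $x,y$, it uses the Corollary ($\mu_{m,x}(V_\ell),\mu_{m,y}(V_\ell)\to 0$) to fix one large $m$ with $\mu_{m,x}(V_\ell)+\mu_{m,y}(V_\ell)\le\epsilon$, writes $\int\phi\,d\mu_{n,x}=\frac{1}{d^m}\sum_{P^m(z_j)=x}\int\phi\,d\mu_{n-m,z_j}$ (and similarly for $y$), selects $[(1-\epsilon)d^m]$ of the $m$-th preimages $z_j$ of $x$ and $t_j$ of $y$ lying outside $V_\ell$, applies the first-step estimate to each pair $(z_j,t_j)$ (finitely many pairs, so one $n$ works for all), and bounds the remaining terms by $2\epsilon$, giving $5\epsilon$ in total. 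This pull-back-by-$m$ decomposition is where the hypothesis actually enters, and it is the step you need to add; the rest of your write-up (including the $C^1$ reduction, which is unnecessary since uniform continuity of $\phi$ suffices) is sound.
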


\begin{proof}
Pick $\epsilon>0.$ Let $\ell=\ell(\epsilon).$

Fix a continuous function $\phi, |\phi(z)|\leq 1, z\in \overline{\mathbb C}.$ Pick two points $z,t\in \mathbb C \setminus V_\ell.$
Let $\gamma$ be a curve consisting of two almost parallel straight lines from $z$ to $t$ avoiding $V_\ell$.
Let $D \subset \subset \tilde{D}$ be  thin topological discs around $\gamma$ which are disjoint from $V_\ell.$ For $n\geq n(z,t)$ large enough, $(1-\epsilon)
d^n$ of the preimages of the discs have diameter so small that $\phi$ varies at most $\epsilon$ on each of those discs. Let $D_i,i=1,\dots,[(1-\epsilon)d^n]$ be a counting of such discs.
Let $z_i,t_i$ be the corresponding preimages of $z,t\in D_i.$ For the remaining preimages
$z_i,t_i, i=[(1-\epsilon)d^n]+1, \dots,d^n$ list the preimages arbitrarily.
We get

\bea
|\int \phi d\mu_{n,z}-\int \phi d\mu_{n,t}| & = & 
\frac{1}{d^n}|\sum_{i=1}^{d^n} (\phi(z_i)-\phi(t_i))|\\
& \leq & \frac{1}{d^n}|\sum_{i=1}^{[(1-\epsilon)d^n]} (\phi(z_i)-\phi(t_i))|\\
& + & \frac{1}{d^n}|\sum_{i=[(1-\epsilon)d^n]+1}^{d^n} (\phi(z_i)-\phi(t_i))|\\
& \leq & \frac{1}{d^n}|\sum_{i=1}^{[(1-\epsilon)d^n]} \epsilon|+\frac{1}{d^n}|\sum_{i=[(1-\epsilon)d^n]+1}^{d^n} 2|\\
& \leq & 3 \epsilon\\
\eea

We next show that $\int \phi \mu_{n,x}-\int \phi \mu_{n,y}\rightarrow 0.$ This will prove
weak convergence of $\mu_{n,x}-\mu_{n,y}$ to $0.$

We know that $\mu_{n,x}(V_\ell),\mu_{n,y}(V_\ell)$ converge to $0$  when $n\rightarrow \infty.$

Next pick a large $m$ so that $\mu_{m,x}(V_\ell)+\mu_{m,y}(V_\ell) \leq \epsilon.$
We then fix $z_j\in P^{-m}(x), j=1,\dots,[(1-\epsilon)d^m], t_j\in P^{-m}(y)), j=1,\dots,[(1-\epsilon)d^m]$ in the complement of $V_\ell$ and label the remaining points in $P^{-m}(x),
P^{-m}(y)$ arbitrarily.

\bea
|\int \phi d\mu_{n,x}-\int \phi d\mu_{n,y}|& = &
|\frac{1}{d^n}\sum_{P^n(w_i)=x} \phi(w_i)-\frac{1}{d^n}\sum_{P^n(\eta_i)=y} \phi(\eta_i)|\\
& = & |\frac{1}{d^n}\sum_{P^m(z_j)=x}\sum_{P^{n-m}(w_{i,j})=z_j} \phi(w_{i,j})\\
& - & \frac{1}{d^n}
\sum_{P^m(t_j)=y}\sum_{P^{n-m}(\eta_{i,j})=t_j} \phi(\eta_{i,j})|\\
& = & |\frac{1}{d^m}\sum_{P^m(z_j)=x}\frac{1}{d^{n-m}}\sum_{P^{n-m}(w_{i,j})=z_j} \phi(w_{i,j})\\
& - & \frac{1}{d^m}
\sum_{P^m(t_j)=y}\frac{1}{d^{n-m}}\sum_{P^{n-m}(\eta_{i,j})=t_j} \phi(\eta_{i,j})|\\
& \leq & \frac{1}{d^m}\sum_{j=1}^{d^m}|\frac{1}{d^{n-m}}\sum_{P^{n-m}(w_{i,j})=z_j} \phi(w_{i,j})\\
& -  & \frac{1}{d^{n-m}}\sum_{P^{n-m}(\eta_{i,j})=t_j} \phi(\eta_{i,j})|\\
& \leq & \frac{1}{d^m}\sum_{j=1}^{[(1-\epsilon)d^m]}|\frac{1}{d^{n-m}}\sum_{P^{n-m}(w_{i,j})=z_j} \phi(w_{i,j})\\
& - & \frac{1}{d^{n-m}}\sum_{P^{n-m}(\eta_{i,j})=t_j} \phi(\eta_{i,j})|\\
& + & 2\epsilon\\
& = & \frac{1}{d^m}\sum_{j=1}^{[(1-\epsilon)d^m]}|\int \phi \mu_{n-m}(z_j) -\int \phi \mu_{n-m}(t_j)|\\
& + & 2\epsilon\\
& \leq & 3\epsilon+2\epsilon= 5 \epsilon\\
\eea

where the last inequality holds for all large enough $n.$ 

\end{proof}


{\bf Exercises}

\begin{exercise}
Show the following version of the Koebe distortion theorem:
\begin{theorem} (Koebe distortion theorem)
Let $0<s<1$. Then there is a constant C so that if $f$ is any 1-1 holomorphic function $f:(|z|<1) \rightarrow \mathbb C$
 then $\sup_{|z|,|w|\leq s} |f(z)-f(w)|\leq C \sqrt{\mbox{Area}(f(\Delta))}$
\end{theorem}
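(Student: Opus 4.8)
The plan is to rerun, essentially word for word, the argument that proved the first Koebe distortion theorem above; the hypotheses $f(0)=0$ and $f(\Delta)\subset\Delta$ there played no essential role, and all one really needs is an estimate of $|f'|$ on $\{|z|\le s\}$ in terms of $\sqrt{\mathrm{Area}(f(\Delta))}$ followed by an integration. First I would fix an auxiliary radius $t$ with $s<t<1$ (for instance $t=(1+s)/2$). For $|z|\le s$ and $t\le r<1$ the Cauchy integral formula gives $f'(z)=\frac{1}{2\pi i}\int_{|\zeta|=r}\frac{f'(\zeta)}{\zeta-z}\,d\zeta$; estimating the kernel by $1/(t-s)$ and then averaging the resulting inequality over $r\in[t,1]$ yields $|f'(z)|\le\frac{1}{2\pi(t-s)(1-t)}\int\int_{|\zeta|<1}|f'(\zeta)|\,dx\,dy$, exactly as before.

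The only point beyond the earlier proof, and the one I would take care to spell out, is the identity $\int\int_{|\zeta|<1}|f'(\zeta)|^2\,dx\,dy=\mathrm{Area}(f(\Delta))$: this is precisely where injectivity of $f$ enters, since $|f'|^2$ is the real Jacobian of $f$ and the change-of-variables formula then counts the image area exactly once (without injectivity one only obtains the area counted with multiplicity, and then the statement can genuinely fail). Should this area be infinite there is nothing to prove, so we may assume it finite. The Cauchy--Schwarz inequality then gives $\int\int_{|\zeta|<1}|f'|\,dx\,dy\le\big(\int\int_{|\zeta|<1}|f'|^2\big)^{1/2}\big(\int\int_{|\zeta|<1}1\big)^{1/2}=\sqrt{\pi}\,\sqrt{\mathrm{Area}(f(\Delta))}$, and hence $\sup_{|z|\le s}|f'(z)|\le C_1\sqrt{\mathrm{Area}(f(\Delta))}$ with $C_1=C_1(s)$.

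To finish, for $|z|,|w|\le s$ the straight segment $[w,z]$ lies inside $\{|\zeta|\le s\}$, so $|f(z)-f(w)|=\big|\int_{[w,z]}f'(\zeta)\,d\zeta\big|\le|z-w|\,\sup_{|\zeta|\le s}|f'(\zeta)|\le 2s\,C_1\sqrt{\mathrm{Area}(f(\Delta))}$, which is the asserted inequality with $C=2sC_1$. No genuinely new idea is needed beyond the first Koebe theorem; the things to watch are the area-versus-multiplicity remark above (the real content of the hypothesis that $f$ be $1$-$1$) and the harmless, expected fact that the constant $C$ degenerates as $s\to 1$.
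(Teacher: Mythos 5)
Your proposal is correct and is essentially the argument the paper itself uses for its first Koebe distortion theorem: Cauchy's formula on circles $|\zeta|=r$, averaging over $r\in[t,1]$, Cauchy--Schwarz together with the identity $\int\int_{\Delta}|f'|^2\,dx\,dy=\mathrm{Area}(f(\Delta))$ (where injectivity enters, as you rightly emphasize), and a final integration of $f'$ along the segment from $w$ to $z$ instead of from $0$. No gaps; the adaptation is exactly what the exercise intends.
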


\end{exercise}

\begin{exercise}
Show that the sequence of measures $\{\delta_{1/n}-\delta_{-1/n}\}_n$ converges weakly to $0.$
\end{exercise}

\begin{exercise}
Let $P$ be a polynomial. Show that there exists a number $R>0$ so that $|P^{-n}(z)|<R$ for
any $z\in \mathbb C$ and all $n>n(z).$
\end{exercise}

\newpage



The Exceptional set $E$ of a polynomial is the largest finite set which is completely invariant.
It will always contain $\infty.$ For $P(z)=z^d$ also the origin belongs to the exceptional set.
But except for this case, only $\infty$ is an exceptional point.

\begin{lemma}
Suppose that $x$ is not in $E.$ Then $\mu_{n,x}(C)$ converges to $0.$
\end{lemma}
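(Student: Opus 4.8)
\textit{Proof proposal.}

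The plan is to argue by contradiction: I will show that if $\mu_{n,x}(C)$ does not tend to $0$, then $P$ must be of the form $a(z-x)^d+x$, which is impossible, because for such a map $x$ lies in the exceptional set $E$. So suppose $\mu_{n,x}(C)\not\to 0$. Since $C$ is a finite set, there is a single critical point $c\in C$, a number $\delta>0$, and indices $n_1<n_2<\cdots$ with $\mu_{n_k,x}(\{c\})\ge\delta$ for all $k$. The combinatorial content of this is: $c$ carries positive mass under $\mu_{n,x}$ only if $P^n(c)=x$, and in that case $\mu_{n,x}(\{c\})=d^{-n}\prod_{i=0}^{n-1}e_i$, where $e_i:=\deg_{P^i(c)}(P)\in\{1,\dots,d\}$ is the local degree (ramification index) of $P$ at $P^i(c)$; here I use that the local degree of $P^n$ at $c$ is the order of vanishing of $P^n(w)-x$ at $w=c$ and that local degrees multiply along a composition.

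The key step is a decay estimate for this mass. Fix $m\ge 1$ and pick $n_k\ge m$; factoring $P^{n_k}=P^{n_k-m}\circ P^m$ and bounding the local degree of $P^{n_k-m}$ at $P^m(c)$ by $\deg(P^{n_k-m})=d^{n_k-m}$ yields $\mu_{n_k,x}(\{c\})\le d^{-m}\prod_{i=0}^{m-1}e_i$. Hence $\prod_{i=0}^{m-1}(e_i/d)\ge\delta$ for every $m$, so the nonnegative series $\sum_{i\ge 0}\log(d/e_i)$ has all partial sums $\le\log(1/\delta)$ and therefore converges. Since each term is either $0$ (when $e_i=d$) or at least $\log\frac{d}{d-1}>0$ (when $e_i<d$), only finitely many $e_i$ differ from $d$; fix $i_0$ with $e_i=d$ for all $i\ge i_0$.

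Now comes the rigidity input. If $\deg_y(P)=d=\deg(P)$ at a point $y$, then $P(w)-P(y)=a(w-y)^d$ with $a$ the leading coefficient of $P$, so $y$ is the \emph{unique} critical point of $P$ and $P(w)=a(w-y)^d+P(y)$. Applying this at $y=P^{i}(c)$ and at $y=P^{i+1}(c)$ for any $i\ge i_0$ forces $P^{i}(c)=P^{i+1}(c)=:y$; hence $P(y)=P^{i_0+1}(c)=y$, so $y$ is a fixed point and $P(w)=a(w-y)^d+y$. Since $n_k\to\infty$, for large $k$ we get $x=P^{n_k}(c)=P^{n_k-i_0}(y)=y$, so $P(w)=a(w-x)^d+x$. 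But then $\{x,\infty\}$ is finite and completely invariant, because $P^{-1}(\{x\})=\{x\}$ and $P^{-1}(\{\infty\})=\{\infty\}$, hence $\{x,\infty\}\subset E$ and in particular $x\in E$, contradicting the hypothesis. Therefore $\mu_{n,x}(C)\to 0$.

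The heart of the matter — and the place I expect to have to be careful — is the passage from ``the mass at $c$ does not decay'' to ``all but finitely many of the local degrees $e_i$ along the forward orbit of $c$ equal $d$''. Once that is in place, the rigidity of totally ramified polynomials pins $P$ down to the exceptional normal form $a(z-x)^d+x$ and everything else is routine bookkeeping; one could alternatively reach the same conclusion by first showing that $x$ is periodic and then invoking $E(P)=E(P^N)$.
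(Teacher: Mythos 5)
Your proof is correct. The mass formula $\mu_{n,x}(\{c\})=d^{-n}\prod_{i<n}e_i$ (when $P^n(c)=x$), the truncation bound $\mu_{n_k,x}(\{c\})\le d^{-m}\prod_{i<m}e_i$, the summability argument forcing $e_i=d$ for all large $i$, and the rigidity ``local degree $d$ at $y$ $\Rightarrow P(w)=a(w-y)^d+P(y)$ with $y$ the unique critical point'' all check out, and they do pin $P$ down to $a(w-x)^d+x$, whence $\{x,\infty\}$ is finite and completely invariant and $x\in E$, a contradiction. (The only unstated triviality: the critical point carrying recurrent mass is necessarily finite, since $x\neq\infty$.)

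This is, however, a genuinely different route from the paper's. The paper argues directly and quantitatively along the \emph{backward} orbit of $x$: since no preimage of $x$ lies in $E$, a nonexceptional point with a single preimage must be of the form $P(z)=a(z-w_0)^d+z_0$, and then that preimage has $d$ distinct simple preimages; so two consecutive totally ramified pullback steps are impossible, giving that every point of $P^{-n}(x)$ has multiplicity at most $d\,(d(d-1))^{n/2}$ and hence point mass $O\!\left(((d-1)/d)^{n/2}\right)$ uniformly, which kills $\mu_{n,x}(C)$ since $C$ is finite. You instead argue by contradiction along the \emph{forward} orbit of a critical point, converting non-decay of mass into eventual total ramification of the critical orbit and then into the exceptional normal form. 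Both proofs hinge on the same rigidity of totally ramified points; the paper's version is shorter and yields an explicit uniform exponential decay rate for the mass of every single point, while yours trades the rate for the extra structural conclusion (the explicit normal form of $P$ in the bad case) at the cost of the subsequence and summability bookkeeping.
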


\begin{proof}
No preimage of $x$ can be in $E.$ If $z_0$ has only one preimage $w_0$ and is nonexceptional,
then $P(z)=a(z-w_0)^d+z_0$ and we see that $w_0$ has d preimages. If $z_0$ has
more than one preimage, then each preimage has multiplicity at most $d-1.$ Hence we
see that in any case the multiplicity of any preimage of $x$ under $P^{-2}$ has multiplicity at most
$d(d-1).$ Hence inductively, the multiplicity of any point in $P^{-n}(x)$ can be at most
$d(d(d-1))^{n/2}.$ Hence $\mu_{n,x}(C)$ goes to zero when $n \rightarrow \infty.$
\end{proof}

\begin{corollary}
If $x,y$ are points outside $E,$ then $\mu_{n,x}-\mu_{n,y}$ converges weakly to $0.$
\end{corollary}

The measures $\mu_{n,x}$ are examples of pull-backs of measures:
$$\mu_{n,x}= (P^n)^*(\delta_x)/d^n.$$ We will generalize this to general measures.
Let $\nu$ be a measure on $\overline{\mathbb C}.$ We define the pullback measure in the following way:
Let first $F$ be a set where $P:F \rightarrow P(F)$ is 1-1. Then we define $P^*(\nu)(F)= \nu(F).$
If $c$ is critical point of $P$ of multiplicity $m$ then $P^*(\nu)(c)= m \nu(P(c)).$
This is enough to define the measure of all Borel sets.

From this definition we get that $P^*(\delta_x)(y)=m\delta_x(P(y))$. In other words $P^*(\delta_x)$ gives mass to the points $P^{-1}(x)$ in the same way as $\mu_{1,x}$ does. We see that:

\begin{lemma}
$P^*(\mu_{n,x})=d\cdot \mu_{n+1,x}.$
\end{lemma}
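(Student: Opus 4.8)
The plan is to unwind both sides of the claimed identity $P^*(\mu_{n,x}) = d\cdot\mu_{n+1,x}$ on a point $y\in\overline{\mathbb C}$, using the definition of pullback of a measure given just above together with the explicit description of $\mu_{n,x}$ as $\frac{1}{d^n}\sum_{P^n(z)=x}\delta_z$ counted with multiplicity. The right-hand side is immediate: $d\cdot\mu_{n+1,x}(\{y\})$ equals $d\cdot\frac{r_{n+1}(y)}{d^{n+1}} = \frac{r_{n+1}(y)}{d^n}$, where $r_{n+1}(y)$ is the multiplicity of $y$ as a zero of $P^{n+1}(z)-x$ (zero if $P^{n+1}(y)\neq x$). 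So the whole content is to show that $P^*(\mu_{n,x})(\{y\}) = r_{n+1}(y)/d^n$.

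First I would reduce to a pointwise statement. By the definition of $P^*$, for any Borel set one has $P^*(\nu)(F) = \nu(F)$ when $P$ is injective on $F$, and $P^*(\nu)(\{c\}) = m_c\,\nu(\{P(c)\})$ when $c$ is a critical point of local multiplicity $m_c$ (i.e. $P(z)-P(c)$ vanishes to order $m_c$ at $c$); and since $\mu_{n,x}$ is a countable sum of atoms, $P^*(\mu_{n,x})$ is determined by its values on singletons. So fix $y$ and set $x' := P(y)$. If $y$ is not a critical point, then $P$ is locally injective near $y$ and $P^*(\mu_{n,x})(\{y\}) = \mu_{n,x}(\{y'\})$-type contribution; more precisely the mass that $P^*(\mu_{n,x})$ assigns to $y$ is exactly $m_y\cdot\mu_{n,x}(\{x'\})$ where $m_y$ is the local multiplicity of $P$ at $y$ (which is $1$ in the noncritical case). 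Thus in all cases $P^*(\mu_{n,x})(\{y\}) = m_y\cdot\mu_{n,x}(\{P(y)\}) = m_y\cdot\frac{r_n(P(y))}{d^n}$, where $r_n(w)$ denotes the multiplicity of $w$ as a zero of $P^n(z)-x$.

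It then remains to verify the chain-rule-type identity for multiplicities: $r_{n+1}(y) = m_y\cdot r_n(P(y))$. This is the key step, and it is where I would spend the care. Write $P^{n+1}(z)-x = (P^n\circ P)(z) - x = g(P(z))$ with $g(w) := P^n(w)-x$. Near $z=y$ put $w=P(z)$; since $P(z)-P(y)$ vanishes to order exactly $m_y$ at $z=y$, we have $w - P(y) = (z-y)^{m_y}u(z)$ with $u(y)\neq 0$, and $g(w) = (w-P(y))^{r_n(P(y))}v(w)$ with $v(P(y))\neq 0$ (this is just the factorization of the polynomial $g$ at the point $P(y)$, valid whether or not $P(y)$ is a root — if $r_n(P(y))=0$ then $g(P(y))\neq 0$ and both sides of the multiplicity identity are $0$). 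Substituting gives $P^{n+1}(z)-x = (z-y)^{m_y r_n(P(y))} u(z)^{r_n(P(y))} v(P(z))$, and the nonvanishing of $u(y)$ and $v(P(y))$ shows the order of vanishing at $y$ is exactly $m_y r_n(P(y))$, i.e. $r_{n+1}(y) = m_y r_n(P(y))$. Combining: $P^*(\mu_{n,x})(\{y\}) = m_y\cdot\frac{r_n(P(y))}{d^n} = \frac{r_{n+1}(y)}{d^n} = d\cdot\mu_{n+1,x}(\{y\})$, and since this holds for every $y$ and both measures are purely atomic, the two measures coincide.

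I expect the main obstacle to be purely bookkeeping: correctly matching the ad hoc definition of $P^*$ (split into an ``injective part'' plus explicit mass at critical points) with the clean statement that $P^*(\nu)(\{y\}) = m_y\nu(\{P(y)\})$ uniformly in $y$, and checking the degenerate cases ($P(y)$ not a root, $y$ a critical point that is also a root, $y=\infty$). The case $y=\infty$ should be handled separately but is easy: $\infty$ is a totally ramified fixed point of $P$ of multiplicity $d$, $\mu_{n,x}(\{\infty\})=0$ for $x\neq\infty$ and $\mu_{n,\infty}=\delta_\infty$, so both sides give the expected value there too. Everything else is the multiplicity identity $r_{n+1}(y)=m_y r_n(P(y))$, which is the substitution computation above.
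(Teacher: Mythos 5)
Your argument is correct and is essentially the paper's own (implicit) reasoning: the paper states the lemma without proof, as an immediate consequence of its observation that $P^*(\delta_x)(\{y\})=m_y\,\delta_x(\{P(y)\})$, and your atom-by-atom computation with the multiplicity chain rule $r_{n+1}(y)=m_y\,r_n(P(y))$ is precisely the bookkeeping that justifies that step. Nothing further is needed.
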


A invariant probability measure $\lambda$ is an equilibrium measure if it satisfies $P^*(\lambda)=
d\cdot \lambda.$ 

We are now ready to prove our main theorem, stated earlier.

\begin{theorem}
Let $P$ be a polynomial of degree $d\geq 2.$ Then there is a unique equilibrium measure $\mu$
which gives no mass to the exceptional set. 
\end{theorem}

The first step in the proof is to show existence. This uses the important concept of Cesaro means.
Let $\lambda_j$ denote a sequence of probability measures.
Define $\sigma_n:=\frac{1}{n}\sum_{j=1}^n \lambda_j.$

We apply this construction of the measures $\mu_{j,x}$ for any given $x$ which is not an exceptional point.

$\lambda_n(x)=\frac{1}{n} \sum_{j=1}^n \mu_{j,x}.$

\begin{lemma}
The measures $\lambda_{n+1,x}-\lambda_{n,x}$ and
$\lambda_{n+1}- P^*\lambda_n/d$ have mass at most $2/(n+1).$
\end{lemma}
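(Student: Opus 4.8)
The plan is a direct computation from the definition $\lambda_{n,x}=\frac1n\sum_{j=1}^n\mu_{j,x}$, using only that each $\mu_{j,x}$ is a probability measure (total variation mass $1$), that the total variation mass is subadditive and homogeneous under scalars, and the identity $P^*(\mu_{j,x})=d\,\mu_{j+1,x}$ from the preceding lemma. Throughout, ``mass'' of a signed measure means its total variation.

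For the first difference I would split off the top term and telescope the rest:
\[
\lambda_{n+1,x}-\lambda_{n,x}
=\Bigl(\tfrac{1}{n+1}-\tfrac1n\Bigr)\sum_{j=1}^{n}\mu_{j,x}+\tfrac{1}{n+1}\,\mu_{n+1,x}
=-\tfrac{1}{n(n+1)}\sum_{j=1}^{n}\mu_{j,x}+\tfrac{1}{n+1}\,\mu_{n+1,x}.
\]
The first term has mass at most $\frac{1}{n(n+1)}\cdot n=\frac1{n+1}$ since it is a combination of $n$ probability measures, and the second has mass $\frac1{n+1}$; hence the total mass is at most $\frac2{n+1}$.

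For the second difference I would first rewrite the pullback. By $P^*(\mu_{j,x})=d\,\mu_{j+1,x}$,
\[
\tfrac1d\,P^*\lambda_{n,x}=\tfrac1n\sum_{j=1}^{n}\mu_{j+1,x}=\tfrac1n\sum_{j=2}^{n+1}\mu_{j,x},
\]
so that, subtracting from $\lambda_{n+1,x}=\frac1{n+1}\sum_{j=1}^{n+1}\mu_{j,x}$, only $\mu_{1,x}$ survives with coefficient $\frac1{n+1}$ while each $\mu_{j,x}$ with $2\le j\le n+1$ picks up coefficient $\frac{1}{n+1}-\frac1n=-\frac{1}{n(n+1)}$:
\[
\lambda_{n+1,x}-\tfrac1d\,P^*\lambda_{n,x}
=\tfrac{1}{n+1}\,\mu_{1,x}-\tfrac{1}{n(n+1)}\sum_{j=2}^{n+1}\mu_{j,x}.
\]
The mass is then at most $\frac1{n+1}+\frac{1}{n(n+1)}\cdot n=\frac2{n+1}$, as claimed.

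There is essentially no obstacle here: the only points that require a little care are keeping track of which index ranges survive after the telescoping, and invoking the correctly normalized identity $P^*\mu_{n,x}=d\,\mu_{n+1,x}$ (so that $\frac1d P^*$ shifts the index without changing the normalization of the Cesàro average). It may be worth remarking that this lemma is exactly what is needed to pass to a weak limit $\mu$ of a subsequence of $\lambda_{n,x}$ satisfying $P^*\mu=d\,\mu$, i.e.\ to produce the equilibrium measure.
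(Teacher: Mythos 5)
Your proof is correct and is essentially identical to the paper's: the same decomposition isolating $\mu_{n+1,x}$ (resp.\ $\mu_{1,x}$) with the remaining terms carrying coefficient $\frac{1}{n+1}-\frac{1}{n}$, the same use of $P^*\mu_{j,x}=d\,\mu_{j+1,x}$ to shift the index, and the same total-variation estimate $\frac{1}{n+1}+\frac{n}{n(n+1)}=\frac{2}{n+1}$.
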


\begin{proof}
For the first statement,

\bea
\lambda_{n+1,x}-\lambda_{n,x} & = & \frac{1}{n+1} \sum_{j=1}^{n+1} \mu_{j,x}-
\frac{1}{n} \sum_{j=1}^{n} \mu_{j,x}\\
& = & \frac{1}{n+1}  \mu_{n+1,x}+\left(\frac{1}{n+1}-\frac{1}{n}\right)\sum_{j=1}^n \mu_{j,x}\\
\eea

From this it follows that the mass is at most $1/(n+1)- n\left(\frac{1}{n+1}-\frac{1}{n}\right)=2/(n+1).$\\

For the second statement, we get:

\bea
\lambda_{n+1}-P^*\lambda_n/d & = & 
\frac{1}{n+1} \sum_{j=1}^{n+1} \mu_{j,x}-\frac{1}{n}
P^*\left(  \sum_{i=1}^n \mu_{i,x}    \right)/d\\
& = & 
\frac{1}{n+1} \sum_{j=1}^{n+1} \mu_{j,x}- \frac{1}{n}\sum_{i=1}^n \mu_{i+1,x} \\
& = & 
\frac{1}{n+1} \sum_{j=1}^{n+1} \mu_{j,x}- \frac{1}{n}\sum_{j=2}^{n+1} \mu_{j,x} \\
& = & 
\left(\frac{1}{n+1}-\frac{1}{n}\right)\sum_{j=2}^{n+1}\mu_{j,x}+ \frac{1}{n+1}\mu_{1,x}\\
\eea

From this we see that the total mass, positive or negative of $\lambda_{n+1}-P^*\lambda_n/d $
is at most $\frac{2}{n+1}.$ 

\end{proof}

Next we will need the concept of {\bf duality}.
Let $\phi:\overline{\mathbb C}\rightarrow \mathbb C$ be a continuous function.
We define the push-forward of the function, $P_*(\phi): \overline{\mathbb C}\rightarrow \mathbb C$
by $P_*(\phi)(z)= \sum_{P(w)=z} \phi(w)$. We count with multiplicity. Then $P_*(\phi)$ is also continuous.

\begin{example}
$P_*(1)= d$
\end{example}

\begin{lemma} (DUALITY)
$$
\int_{\overline{\mathbb C}} \phi P^*\nu= \int _{\overline{\mathbb C}} P_*(\phi) \nu.
$$
\end{lemma}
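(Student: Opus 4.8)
The plan is to verify the duality identity first on indicator functions and then extend to continuous functions by linearity and density, since $P^*\nu$ was itself defined piecewise on sets where $P$ is injective. So first I would reduce to checking the identity for $\phi$ the characteristic function of a Borel set $F$ on which $P$ restricts to a homeomorphism onto $P(F)$; such sets generate, because we can decompose $\overline{\mathbb C}$ minus the finitely many critical points into countably many pieces on each of which $P$ is injective. On such an $F$, $P_*(\phi)(z) = \sum_{P(w)=z}\phi(w)$ counts, at a point $z = P(w_0)$ with $w_0 \in F$, the single term $\phi(w_0) = 1$ plus contributions from the other preimages of $z$ which lie outside $F$ and so contribute $0$; thus $P_*(\mathbf 1_F) = \mathbf 1_{P(F)}$ away from critical values, where there might be a multiplicity adjustment on a finite (hence $\nu$-small, if we first dispose of atoms at critical values separately) set. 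Then $\int P_*(\mathbf 1_F)\,\nu = \nu(P(F))$, while $\int \mathbf 1_F\, P^*\nu = (P^*\nu)(F) = \nu(F) = \nu(P(F))$ by the very definition of $P^*\nu$ on sets of injectivity — so the two sides agree.

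Next I would handle the critical points: these are finitely many points $c_1,\dots,c_k$ with multiplicities $m_i$, and the definition gives $(P^*\nu)(\{c_i\}) = m_i\,\nu(\{P(c_i)\})$. On the dual side, $P_*(\phi)(P(c_i))$ includes the term $\phi(c_i)$ with multiplicity $m_i$ (counting with multiplicity, as stated), so taking $\phi = \mathbf 1_{\{c_i\}}$ gives $\int P_*(\mathbf 1_{\{c_i\}})\,\nu = m_i\,\nu(\{P(c_i)\}) = (P^*\nu)(\{c_i\}) = \int \mathbf 1_{\{c_i\}}\, P^*\nu$, matching again. Combining the injectivity-piece computation with the critical-point bookkeeping, and using countable additivity of both $\nu$ and $P^*\nu$, I get the identity for all $\phi = \mathbf 1_B$, hence for simple functions by linearity, hence for all bounded Borel $\phi$ (in particular all continuous $\phi$) by the standard monotone/dominated convergence argument, noting $P_*$ is linear and maps bounded functions to bounded functions and continuous functions to continuous functions as already observed in the text.

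A cleaner alternative, which I would probably present as the actual proof, is to verify the identity directly when $\nu = \delta_x$ is a Dirac mass, since both sides are then completely explicit: the already-noted fact $P^*(\delta_x)(y) = m\,\delta_x(P(y))$ says $P^*\delta_x = \sum_{P(w)=x} m_w\,\delta_w$ (sum over preimages with local multiplicity), so $\int \phi\, P^*\delta_x = \sum_{P(w)=x} m_w\,\phi(w) = \bigl(P_*\phi\bigr)(x) = \int P_*\phi\, \delta_x$ — essentially the definition of $P_*$. Then the general case follows because every finite Borel measure $\nu$ on $\overline{\mathbb C}$ is a weak limit (indeed an integral) of Dirac masses, and both $\nu \mapsto \int \phi\,P^*\nu$ and $\nu \mapsto \int P_*\phi\,\nu$ are linear and weakly continuous in $\nu$ for fixed continuous $\phi$ — the latter obviously, the former because $P^*$ is weakly continuous (it is given by integrating against the weakly continuous family $x \mapsto P^*\delta_x/\text{const}$). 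Concretely, writing $\int\phi\,P^*\nu = \int_{\overline{\mathbb C}} \bigl(\int \phi\, P^*\delta_x\bigr) d\nu(x) = \int_{\overline{\mathbb C}} (P_*\phi)(x)\,d\nu(x)$ by Fubini gives the result in one line once the $\delta_x$ case is in hand.

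The main obstacle is purely a matter of careful bookkeeping at the critical points and critical values: one must make sure the "count with multiplicity" convention in the definition of $P_*$ is exactly the one dual to the multiplicity weights built into the definition of $P^*\nu$, and that no preimage is double-counted or dropped where branches of $P^{-1}$ come together. Once the $\delta_x$ case is checked with the right conventions — which is really just unwinding two definitions — the Fubini step is routine and there is no analytic difficulty, since everything in sight is a finite measure on a compact space and $\phi$ is bounded.
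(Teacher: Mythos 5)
Your first argument is correct and is in substance the same as the paper's: the paper also reduces to the places where $P$ is injective (via a partition of unity $\{\chi_i\}$ with $\chi_1$ supported near the critical set and the other $\chi_i$ supported in sets where $P$ is 1-1, so that $\int (\chi_i\phi)\,P^*\nu=\int P_*(\chi_i\phi)\,\nu$ holds by definition there) and then disposes of the critical set with a one-sentence remark about the point masses. You implement the same idea measure-theoretically, with a Borel partition into injectivity pieces, explicit bookkeeping of the multiplicities $m_i$ at the finitely many critical points, and then simple functions plus dominated convergence; this is if anything more complete than the paper's sketch, since the paper never spells out the limiting argument near the critical set, whereas your indicator-function computation $\int P_*(\mathbf 1_F)\,\nu=\nu(P(F))=(P^*\nu)(F)$ together with the atom computation at each $c_i$ covers everything. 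One caution about the ``cleaner alternative'' you say you would actually present: the step $\int\phi\,P^*\nu=\int\bigl(\int\phi\,P^*\delta_x\bigr)d\nu(x)$ is not free. It presupposes the disintegration $P^*\nu=\int P^*\delta_x\,d\nu(x)$ (equivalently, weak continuity/linearity of $\nu\mapsto P^*\nu$ in the relevant sense), and from the paper's piecewise definition of $P^*\nu$ the only way to verify that disintegration is exactly the injectivity-piece-plus-critical-point bookkeeping of your first argument; asserting that $P^*$ is weakly continuous before duality is available comes close to assuming the conclusion. So the Dirac-mass formulation is a nice way to package the statement ($\int\phi\,P^*\delta_x=(P_*\phi)(x)$ is indeed just the definition), but it does not bypass the real content, and your proof should be read as resting on the first argument, which is sound.
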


\begin{proof}
We can find a partition of unity $\{\chi_i\}_{i=1}^N$ on $\mathbb P^1$ so that $\chi_1$ has support
near the critical set and the other $\chi_i$ have support in sets where $P$ is 1-1.
For $i>1$ we get that $\int (\chi_i\phi)P^*\nu= \int (\chi_i\phi)_* \nu$. Near the critical set the dominating contributions are  the point masses there.
\end{proof}

\begin{lemma}
There exists a probability measure $\mu$ on $\mathbb P^1$ so that $\mu=P^*(\mu)/d.$
Moreover the measure has no mass on the exceptional set.
\end{lemma}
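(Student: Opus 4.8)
The plan is to use the Cesàro means $\lambda_{n,x}$ to extract a candidate equilibrium measure, and then verify the invariance identity $P^*(\mu)=d\cdot\mu$ by a limiting argument using the operator bound on $\lambda_{n+1,x}-P^*\lambda_{n,x}/d$ already established. First I would fix a non-exceptional point $x$ (for instance any point in the Julia set, which is infinite, or simply any $x\notin E$) and consider the sequence of probability measures $\lambda_{n,x}=\frac1n\sum_{j=1}^n\mu_{j,x}$. By the compactness of the space of probability measures on $\overline{\mathbb C}$ in the weak topology, there is a subsequence $\lambda_{n_k,x}$ converging weakly to some probability measure $\mu$. This $\mu$ is the candidate.

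The next step is to show $P^*(\mu)=d\cdot\mu$. For any continuous test function $\phi$ on $\overline{\mathbb C}$, duality gives $\int \phi\, P^*\lambda_{n,x} = \int P_*(\phi)\,\lambda_{n,x}$, and since $P_*(\phi)$ is again continuous, weak convergence of $\lambda_{n_k,x}$ yields $\int P_*(\phi)\,\lambda_{n_k,x}\to\int P_*(\phi)\,\mu = \int\phi\,P^*\mu$. On the other hand, from the lemma just proved, the signed measure $\lambda_{n+1,x}-P^*\lambda_{n,x}/d$ has total mass at most $2/(n+1)$, so $\bigl|\int\phi\,\lambda_{n+1,x}-\tfrac1d\int\phi\,P^*\lambda_{n,x}\bigr|\le \frac{2\|\phi\|_\infty}{n+1}\to 0$; meanwhile $\int\phi\,\lambda_{n_k+1,x}\to\int\phi\,\mu$ as well, since $\lambda_{n+1,x}-\lambda_{n,x}$ also has mass $\to 0$ (the other half of the lemma). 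Combining these gives $\int\phi\,\mu = \tfrac1d\int\phi\,P^*\mu$ for every continuous $\phi$, hence $P^*(\mu)=d\cdot\mu$.

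It remains to check that $\mu$ gives no mass to the exceptional set $E$. Since $E$ is a finite set, it suffices to bound $\mu(\{p\})$ for each $p\in E$. I would use the relation $P^*(\mu)=d\cdot\mu$: the exceptional set is completely invariant, so if $p\in E$ then its preimages are again in $E$, but a careful accounting (as in the earlier lemma bounding $\mu_{n,x}(C)$) shows that each point of $E$ other than $\infty$ forces $P(z)=a(z-w_0)^d+z_0$, and in that case the point mass at the exceptional point would have to be a fixed ratio under pullback. Concretely: if $p$ is the finite exceptional point of $P(z)=a(z-p)^d+p$ (up to conjugation $p=0$, $P=az^d$), then $P^{-1}(p)=\{p\}$ with multiplicity $d$, so $P^*(\mu)(\{p\})=d\,\mu(\{p\})$ automatically — this does not by itself force $\mu(\{p\})=0$. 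So instead I would argue from the construction: each $\mu_{j,x}$ assigns to $p$ a mass equal to (multiplicity of $p$ in $P^{-j}(x)$)$/d^j$, and since $x\notin E$ and $\mu_{j,x}(C)\to 0$, in fact the mass $\mu_{j,x}(\{p\})\to 0$, whence $\lambda_{n,x}(\{p\})\to 0$; as $\{p\}$ is closed, weak convergence gives $\mu(\{p\})\le\liminf\lambda_{n_k,x}(\{p\})=0$. The same argument applies at $\infty$ unless $x=\infty$, which we have excluded.

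The main obstacle I anticipate is the interchange of limits — making precise that weak convergence of $\lambda_{n_k,x}$ together with the $O(1/n)$ operator bounds really forces the exact identity $P^*\mu=d\mu$, rather than merely an approximate one — and the bookkeeping to see $\mu_{j,x}(\{p\})\to 0$ for the exceptional points, which reduces to the multiplicity estimate already given in the lemma preceding this one. Uniqueness is not asserted in this lemma (only in the theorem), so I would not address it here.
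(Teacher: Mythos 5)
Your construction of $\mu$ and your verification of $P^*\mu=d\mu$ are exactly the paper's argument: extract a weak limit of the Ces\`aro means $\lambda_{n,x}$, use the two $2/(n+1)$ mass bounds to identify the limits of $\lambda_{n_k+1,x}$ and of $P^*\lambda_{n_k,x}/d$ with $\mu$, and use duality ($\int\phi\,P^*\lambda_{n_k}=\int P_*(\phi)\,\lambda_{n_k}$ with $P_*(\phi)$ continuous) to pass the pullback through the weak limit. That part is fine.

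The gap is in the last step, ``no mass on $E$.'' You argue that $\mu_{j,x}(\{p\})=0$ for each exceptional $p$ (true, since $E$ is completely invariant and $x\notin E$, so no preimage of $x$ lies in $E$), and then conclude $\mu(\{p\})\le\liminf_k\lambda_{n_k,x}(\{p\})=0$ ``because $\{p\}$ is closed.'' This is the wrong direction of the portmanteau theorem: for a closed set $F$ weak convergence gives $\limsup_k\lambda_{n_k}(F)\le\mu(F)$, i.e.\ a \emph{lower} bound on $\mu(F)$, not an upper bound (the liminf inequality you want holds for \emph{open} sets, and $\{p\}$ is not open). The standard counterexample is $\delta_{1/n}\to\delta_0$: every approximant gives zero mass to $\{0\}$, yet the limit charges it fully. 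So the fact that each $\lambda_{n,x}$ assigns zero mass to the exceptional points proves nothing about $\mu$; mass could in principle concentrate there in the limit. What is actually needed, and what the paper supplies, is a localization of the supports: if there is a finite exceptional point then (after conjugation) $P(z)=z^d$, and for $x\neq0,\infty$ the sets $P^{-n}(x)$ converge to the unit circle, so the supports of the $\lambda_{n,x}$ eventually avoid a fixed neighborhood of $0$ (and of $\infty$); for a general polynomial, $E=\{\infty\}$ and the preimages $P^{-n}(x)$ of a finite nonexceptional $x$ lie in a fixed disc $\{|z|<R\}$ for all large $n$ (one of the exercises). Testing against a continuous bump function equal to $1$ at $p$ and supported in that avoided neighborhood then gives $\mu(\{p\})=0$. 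Without such a uniform separation of the preimages from $E$, your argument does not close.
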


\begin{proof}
Let $\mu$ be the weak limit of some subsequence $\lambda_{n_k}$. By the first part of Lemma 9.6,
it follows that $\mu$ is also the weak limit of the subsequence $\lambda_{n_k+1}.$
From the second part of the same lemma, it follows also that $\mu$ is the weak limit of the
sequence $P^*(\lambda_{n_k})/d$. We only need to show that $P^*(\lambda_{n_k})/d$
converge weakly to $P^*\mu/d.$ For this we use duality.\\

Let $\phi$ be a continuous function:
\bea
\int \phi P^*(\lambda_{n_k})/d  &= & \int P_*(\phi) \lambda_{n_k}/d\\
& \rightarrow & \int P_*(\phi) \mu/d\\
& = & \int \phi P^*\mu/d\\
\eea

It remains to show that there is no mass on $E$. We recall that we started the construction using a point $x$ which is not an exceptional point. Note that if there is a finite point in the exceptional set,
then $P(z)=z^d$. If $x\neq 0$, then $P^{-n}(x) \rightarrow \{|z|=1\}.$ Hence the support of the measure $\mu$ is on the unit circle, so it does not give mass to $0$ (nor to $\infty$).
\end{proof}

To complete the proof of Theorem 9.4, we need to show that $\mu$ is the unique equilibrium measure.
We show a stronger result.

\begin{theorem}
There is a unique probablity measure $\mu$ on $\mathbb P^1$ such that
$\frac{P^*\mu}{d}=\mu$ and with no mass on the exceptional set. Moreover, for any probablity measure $\nu$ with no mass on $E,$
$$
\frac{(P^n)^*\nu}{d^n} \rightarrow \mu.
$$
In particular, $\mu_{n,x}=\frac{(P^n)^*\delta_x}{d^n}$ converges to $\mu$ if and only if
$x$ does not belong to $E.$
\end{theorem}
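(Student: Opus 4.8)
The plan is to split the statement into existence, uniqueness, and the convergence assertion, deriving the last bullet as an immediate corollary. Existence of a probability measure $\mu$ with $P^*\mu/d = \mu$ and no mass on $E$ has already been established in the previous lemma via Cesàro means, so the real content here is (a) the convergence $(P^n)^*\nu/d^n \to \mu$ for every probability measure $\nu$ giving no mass to $E$, and (b) uniqueness, which will in fact follow from (a).

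For the convergence statement, the key idea is to reduce the pull-back of an arbitrary $\nu$ to the point-mass pull-backs $\mu_{n,x}$, which we already control. First I would observe that $\frac{(P^n)^*\nu}{d^n} = \int \mu_{n,x}\, d\nu(x)$; this is just Fubini combined with the definition $\mu_{n,x} = (P^n)^*\delta_x/d^n$ and the linearity of $P^*$. Then, for a fixed continuous test function $\phi$ with $\|\phi\|\le 1$, we have
\[
\int \phi \, \frac{(P^n)^*\nu}{d^n} - \int \phi\, d\mu = \int \left( \int \phi\, d\mu_{n,x} - \int \phi\, d\mu \right) d\nu(x).
\]
From the Corollary following Theorem 9.3 (with $y$ chosen to be some fixed non-exceptional point, say a point on the Julia set), $\int \phi\, d\mu_{n,x} - \int \phi\, d\mu_{n,y} \to 0$ as $n\to\infty$, for each fixed $x\notin E$; and since $\mu$ is the weak limit of Cesàro averages of the $\mu_{j,y}$, a standard averaging argument shows $\int\phi\,d\mu_{n,y}\to\int\phi\,d\mu$ as well — indeed, applying the telescoping estimate in Lemma 9.6 to the sequence $\mu_{n,y}$ shows it is itself weakly convergent and hence converges to the same limit $\mu$ as its Cesàro means. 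So the integrand tends to $0$ pointwise in $x$ on the full-measure set $\overline{\mathbb C}\setminus E$. The dominated convergence theorem (the integrand is bounded by $2$) then gives $\int \phi\,\frac{(P^n)^*\nu}{d^n}\to\int\phi\,d\mu$. This is the main obstacle: making sure the pointwise-in-$x$ convergence from Theorem 9.3 is genuinely uniform enough — it is not uniform in $x$, but dominated convergence only needs pointwise convergence plus a uniform bound, both of which we have.

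Uniqueness is then immediate: if $\mu'$ is any probability measure with $P^*\mu'/d = \mu'$ and no mass on $E$, then $(P^n)^*\mu'/d^n = \mu'$ for all $n$, while the convergence result forces $(P^n)^*\mu'/d^n \to \mu$; hence $\mu' = \mu$. Finally, for the last sentence: if $x\notin E$, apply the convergence with $\nu=\delta_x$ to get $\mu_{n,x}\to\mu$; conversely if $x\in E$, then $\mu_{n,x}$ is a convex combination of Dirac masses supported on the finite completely invariant set $E$ (in the only relevant case $P(z)=a(z-z_0)^d+z_0$ one computes $\mu_{n,x}$ explicitly as such a combination of $\delta_{z_0}$ and $\delta_\infty$), so every weak limit is supported on $E$ and cannot equal $\mu$, which gives no mass to $E$. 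I would present the $x\in E$ direction by noting that $\operatorname{supp}\mu_{n,x}\subset E$ whenever $x\in E$ since $E$ is completely invariant, so any limit is supported on $E$.
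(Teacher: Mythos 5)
Your overall architecture is the right one (test against a continuous $\phi$ with $\|\phi\|\leq 1$, write $\int\phi\,\frac{(P^n)^*\nu}{d^n}=\int\bigl(\int\phi\,d\mu_{n,x}\bigr)d\nu(x)$ via duality/Fubini, use boundedness by $2$ and dominated convergence, then get uniqueness by applying the convergence to an invariant $\nu$; your treatment of the ``only if'' direction for $x\in E$, via complete invariance of $E$, is fine and is in fact more explicit than the paper). But there is a genuine gap at the crux: you reduce everything to the claim that $\int\phi\,d\mu_{n,y}\to\int\phi\,d\mu$ for one fixed non-exceptional $y$, and you justify this by ``applying the telescoping estimate in Lemma 9.6 to the sequence $\mu_{n,y}$.'' Lemma 9.6 says nothing about the sequence $\mu_{n,y}$ itself: its two estimates concern the Ces\`aro averages $\lambda_n=\frac1n\sum_{j\le n}\mu_{j,x}$ (consecutive averages of any bounded sequence of probability measures differ by mass $O(1/n)$, so this is no information about $\mu_{n,y}$), and weak convergence of a subsequence of Ces\`aro means does not imply weak convergence of the original sequence. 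In fact the convergence $\mu_{n,y}\to\mu$ is exactly the ``in particular'' clause of the theorem you are proving, so as written the argument is essentially circular; a Tauberian rescue would need quantitative control like $\mu_{n+1,y}-\mu_{n,y}=O(1/n)$ in mass, which you do not have.

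The missing idea, which is how the paper closes this loop, is to never ask for convergence of $\mu_{n,y}$ at a single point, but instead to integrate the difference $H_n(x,y)=\int\phi\,d\mu_{n,x}-\int\phi\,d\mu_{n,y}$ against $d\mu(x)\,d\nu(y)$. Theorem 9.3 plus the lemma on non-exceptional points gives $H_n\to 0$ pointwise off $E\times E$, hence $\iint H_n\,d\mu\,d\nu\to 0$ by dominated convergence; and the $x$-integral is computed exactly, for every $n$, from the equilibrium property and duality:
\[
\int\Bigl(\int\phi\,d\mu_{n,x}\Bigr)d\mu(x)=\int\frac{(P^n)_*\phi}{d^n}\,d\mu=\int\phi\,\frac{(P^n)^*\mu}{d^n}=\int\phi\,d\mu .
\]
Subtracting, $\int\phi\,\frac{(P^n)^*\nu}{d^n}\to\int\phi\,d\mu$, which both proves your convergence claim and, taking $\nu=\delta_y$, supplies the very statement $\mu_{n,y}\to\mu$ you were missing. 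With this substitution your proof goes through; the rest of your write-up (uniqueness and the two directions of the final equivalence) needs no change.
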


In fact the only part of the proof missing is to show that  for any probablity measure $\nu$ with no mass on $E,$
$$
\frac{(P^n)^*\nu}{d^n} \rightarrow \mu:
$$

In fact this implies uniqueness: If $\frac{P^*\nu}{d}=\nu,$
then $\nu= \frac{(P^n)^*\nu}{d^n}\rightarrow \mu,$ so $\nu=\mu.$

Let $\phi$ be any continuous function. We will show that $\int \phi\cdot (P^n)^*\nu/d^n
\rightarrow \int \phi d\mu.$ This proves weak convergence.

Define continuous functions $F_n(x)=\frac{1}{d^n}(P^n)_*(\phi)(x) = \frac{1}{d^n}\sum_{P^n(w)=x} \phi(w)=\int \phi(z) \mu_{n,x}(z)$ and $G_n(y)=\frac{1}{d^n}(P^n)_*(\phi)(y) = \frac{1}{d^n}\sum_{P^n(\eta)=y} \phi(\eta)=\int \phi(z) \mu_{n,y}(z)$
We know that the measures $\mu_{n,x}-\mu_{n,y}$ converge weakly to $0$ for any $x,y$
which are non exceptional points. It follows that the function $H_n(x,y)=F_n(x)-G_n(y)$ goes pointwise to $0.$ Hence $\int \int H_n(x,y) d\mu(x) d\nu(y) \rightarrow 0.$

We have that 

\bea 
\int \int F_n(x) d\mu(x) d\nu(y) & = & \int F_n(x) d\mu(x)\\
& = &
\int \frac{1}{d^n}(P^n)_*{\phi} d\mu(x)\\
& = & \int \phi\cdot \frac{ (P^n)^*\mu}{d^n} =\int \phi \mu\\
\eea

and 
\bea 
\int \int G_n(y) d\mu(x) d\nu(y) & = & \int G_n(y) d\nu(y)\\
& = & \int \frac{1}{d^n}(P^n)_*(\phi)(y) d\nu(y)=
\int \phi(y) \cdot \frac{(P^n)^*\nu}{d^n}.\\
\eea

 So we have shown that $\int \phi \cdot \frac{(P^n)^*\nu}{d^n}
\rightarrow \int \phi d\mu$.

{\bf  Ergodicity and mixing}

\begin{definition}
Let $\nu$ be a probability measure. 
We say that $\nu$ is mixing if for every pair of Borel sets $E,F$,
$\nu(E \cap P^{-n}(F)) \rightarrow \nu(E) \nu(F)$ when $n \rightarrow \infty.$
\end{definition}

\begin{lemma}
Let $\phi,\psi$ be two continuous functions. Then
$$\int \phi(z) \psi(P^n)(z) d\mu(z) \rightarrow (\int \phi d\mu )(\int \psi d\mu).$$
\end{lemma}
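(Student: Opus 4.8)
The plan is to reduce the claim to the convergence $\mu_{n,x}\to\mu$ already proved above, using the fixed-point identity $\mu=(P^n)^*\mu/d^n$ (obtained by iterating $P^*\mu=d\mu$) together with the Duality Lemma.

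First I would rewrite the left-hand side. Since $\mu=\frac{(P^n)^*\mu}{d^n}$,
$$\int_{\overline{\mathbb C}}\phi(z)\,\psi(P^n(z))\,d\mu(z)=\frac{1}{d^n}\int_{\overline{\mathbb C}}\phi\cdot(\psi\circ P^n)\,(P^n)^*\mu.$$
Applying the Duality Lemma to the polynomial $P^n$ (legitimate because $(P^n)_*=P_*\circ\cdots\circ P_*$ and $(P^n)^*=P^*\circ\cdots\circ P^*$, so one iterates the lemma $n$ times), the right-hand side equals $\frac{1}{d^n}\int (P^n)_*\big(\phi\cdot(\psi\circ P^n)\big)\,d\mu$. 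The key observation is that every $w$ with $P^n(w)=x$ satisfies $\psi(P^n(w))=\psi(x)$, so
$$(P^n)_*\big(\phi\cdot(\psi\circ P^n)\big)(x)=\psi(x)\sum_{P^n(w)=x}\phi(w)=\psi(x)\,(P^n)_*(\phi)(x).$$
Writing $F_n(x):=\frac{1}{d^n}(P^n)_*(\phi)(x)=\int\phi\,d\mu_{n,x}$ — the same continuous functions used in the preceding proof — we obtain
$$\int_{\overline{\mathbb C}}\phi(z)\,\psi(P^n(z))\,d\mu(z)=\int_{\overline{\mathbb C}}\psi(x)\,F_n(x)\,d\mu(x).$$

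It then remains to pass to the limit in $\int\psi F_n\,d\mu$. Each $F_n$ is continuous and $|F_n|\le\sup|\phi|$ everywhere, so $\{\psi F_n\}$ is uniformly bounded. By the convergence established above, $\mu_{n,x}\to\mu$ weakly for every $x$ outside the exceptional set $E$, hence $F_n(x)\to\int\phi\,d\mu$ for every such $x$; since $\mu(E)=0$, this pointwise convergence holds $\mu$-almost everywhere. The bounded convergence theorem now gives
$$\int_{\overline{\mathbb C}}\psi(x)\,F_n(x)\,d\mu(x)\longrightarrow\int_{\overline{\mathbb C}}\psi(x)\Big(\int\phi\,d\mu\Big)\,d\mu(x)=\Big(\int\phi\,d\mu\Big)\Big(\int\psi\,d\mu\Big),$$
which is the asserted limit.

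The main obstacle is exactly this interchange of limit and integral: one has only pointwise, not uniform, convergence of $F_n$, so the argument genuinely needs both that $\mu_{n,x}\to\mu$ for every non-exceptional $x$ and that $\mu$ gives no mass to $E$ — the two properties of the equilibrium measure established earlier. The only other point requiring a word of justification is that the Duality Lemma applies to the iterate $P^n$, which, as noted, follows by iterating the lemma.
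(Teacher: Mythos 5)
Your proof is correct and follows essentially the same route as the paper: both use the invariance $\mu=(P^n)^*\mu/d^n$ together with the Duality Lemma to rewrite the integral as $\int \psi(x)\,F_n(x)\,d\mu(x)$ with $F_n(x)=\frac{1}{d^n}(P^n)_*\phi(x)=\int\phi\,d\mu_{n,x}$, and then let $n\to\infty$ using $\mu_{n,x}\to\mu$ off the exceptional set. The only difference is cosmetic --- the paper pulls back the measure $\psi\,d\mu$ while you push forward $\phi\cdot(\psi\circ P^n)$ --- and you make explicit the bounded convergence argument (pointwise convergence off $E$, $\mu(E)=0$, uniform bound $\sup|\phi|$) that the paper leaves implicit.
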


\begin{proof}
We use duality. Note that if $\nu$ is a Borel measure and $\lambda$ is continuous,
then $\lambda d\nu$ is a Borel measure.

First observe that if $\phi$ is a continuous function and $x$ is outside $E$,
then (*)
\bea
\frac{(P^n)_*\phi}{d^n}(x) & = & \frac{1}{d^n}\sum_{z\in P^{-n}(x)}\phi(z)\\
& = & \frac{1}{d^n} \int \phi(z) \mu_{n,x}(z) \\
& \rightarrow & \int \phi(z) d\mu(z)\\
\eea

Pick two continuous functions.
\bea
\int \phi(z) \psi(f^n)(z) d\mu(z) & = & \int \phi(z) \psi(f^n)(z) \frac{(f^n)^* d\mu}{d^n}\\
& = & \int \phi(z) \frac{(f^n)^* (\psi d\mu)}{d^n}\\
& = & \int \frac{(f^n)_*\phi(z)}{d^n}\psi(z) d\mu(z)\\
& \rightarrow & \int \phi(z) d\mu \int \psi d\mu\\
\eea

\end{proof}

We extend the lemma to the case when we have one bounded measurable function and one continuous function:

\begin{lemma}
Let $\sigma$ be a bounded measurable function and $\phi$ a  continuous function. Then
$\int \phi \sigma(P^n)(z) d\mu(z) \rightarrow (\int \phi d\mu )(\int \sigma d\mu).$
\end{lemma}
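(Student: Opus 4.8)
The plan is to bootstrap from the previous lemma (which handles two continuous functions) by approximating the bounded measurable function $\sigma$ in $L^1(\mu)$ by continuous functions, and carefully controlling the error that arises from composing with $P^n$.

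\textbf{Setup and reduction.} We are given $\sigma$ bounded measurable, say $\|\sigma\|_\infty \leq M$, and $\phi$ continuous. Fix $\varepsilon > 0$. By regularity of the Borel measure $\mu$ on the compact metric space $\overline{\mathbb C}$ (equivalently, density of $\mathcal C(\overline{\mathbb C})$ in $L^1(\mu)$ — or one may use Lusin's theorem), we can choose a continuous function $\psi$ with $\|\psi\|_\infty \leq M$ and $\int |\sigma - \psi|\, d\mu < \varepsilon$. Write the difference we want to estimate as
\[
\int \phi\,\sigma(P^n)\,d\mu - \Big(\int\phi\,d\mu\Big)\Big(\int\sigma\,d\mu\Big)
= A_n + B_n + C,
\]
where $A_n = \int \phi\,[\sigma(P^n) - \psi(P^n)]\,d\mu$, $B_n = \int\phi\,\psi(P^n)\,d\mu - (\int\phi\,d\mu)(\int\psi\,d\mu)$, and $C = (\int\phi\,d\mu)(\int\psi\,d\mu - \int\sigma\,d\mu)$. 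The term $C$ has absolute value at most $\|\phi\|_\infty\,\varepsilon$, and $B_n \to 0$ as $n\to\infty$ by the previous lemma (applied to the two continuous functions $\phi$ and $\psi$). So everything hinges on the term $A_n$.

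\textbf{The main obstacle: estimating $A_n$.} The naive bound $|A_n| \leq \|\phi\|_\infty \int |\sigma(P^n) - \psi(P^n)|\,d\mu$ is only useful if $\int |\sigma(P^n) - \psi(P^n)|\,d\mu$ is small, and this is precisely where invariance of $\mu$ saves the day: since $\mu$ is an equilibrium measure we have $(P^n)^*\mu/d^n = \mu$, hence by duality (the DUALITY lemma, applied to the continuous function $|\sigma - \psi|$ — or more carefully to continuous approximants, then passing to the limit, since $|\sigma-\psi|$ is only measurable) one gets $\int |\sigma - \psi|(P^n)\,d\mu = \int |\sigma-\psi|\,\frac{(P^n)^*\mu}{d^n} = \int|\sigma-\psi|\,d\mu < \varepsilon$. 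More directly: for any bounded measurable $h$, invariance of $\mu$ under $P$ in the sense $\mu(P^{-1}(B)) = \mu(B)$ gives $\int h(P^n)\,d\mu = \int h\,d\mu$ by the standard change-of-variables for push-forward measures. Applying this with $h = |\sigma - \psi|$ yields $|A_n| \leq \|\phi\|_\infty\,\varepsilon$ for every $n$. The subtlety worth spelling out is justifying $\int h(P^n)\,d\mu = \int h\,d\mu$ for merely bounded measurable $h$: this follows because $\mu$ is $P$-invariant (proved via the equilibrium property: for any Borel $B$, $\mu(P^{-1}B) = \frac{1}{d}\int \mathbf 1_{P^{-1}B}\,d(P^*\mu) = \frac1d\int (P_*\mathbf 1_{P^{-1}B})\,d\mu = \frac1d\int d\cdot\mathbf 1_B\,d\mu = \mu(B)$, using $P_*(\mathbf 1_{P^{-1}B})(z) = \sum_{P(w)=z}\mathbf 1_B(z) = d\,\mathbf 1_B(z)$ counting multiplicity), and then the identity extends from indicators to simple functions to bounded measurable functions by linearity and dominated convergence.

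\textbf{Conclusion.} Combining the three estimates, for all $n$ sufficiently large,
\[
\Big|\int \phi\,\sigma(P^n)\,d\mu - \Big(\int\phi\,d\mu\Big)\Big(\int\sigma\,d\mu\Big)\Big|
\leq |A_n| + |B_n| + |C| \leq 2\|\phi\|_\infty\,\varepsilon + |B_n|,
\]
and since $B_n \to 0$ the right side is $\leq (2\|\phi\|_\infty + 1)\varepsilon$ for $n$ large. As $\varepsilon>0$ was arbitrary, the limit is $(\int\phi\,d\mu)(\int\sigma\,d\mu)$, which is the claim. I expect the only genuinely delicate point to be the justification that $\mu$ is $P$-invariant for bounded measurable functions (not just the formal equilibrium identity on continuous test functions), so I would state that as a short preliminary observation before running the approximation argument.
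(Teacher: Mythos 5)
Your proof is correct, but it takes a genuinely different route from the paper's. The paper never approximates $\sigma$: it regards $\sigma\,d\mu$ as a Borel measure, uses the observation marked (*) in the preceding lemma that $\frac{(P^n)_*\phi}{d^n}(x)=\int\phi\,d\mu_{n,x}\to\int\phi\,d\mu$ pointwise off the exceptional set (which has $\mu$-measure zero), applies dominated convergence against $\sigma\,d\mu$ to get $\int\frac{(P^n)_*\phi}{d^n}\,\sigma\,d\mu\to(\int\phi\,d\mu)(\int\sigma\,d\mu)$, and then identifies $\int\frac{(P^n)_*\phi}{d^n}\,\sigma\,d\mu=\int\phi\,\frac{(P^n)^*(\sigma\,d\mu)}{d^n}=\int\phi\,\sigma(P^n)\,d\mu$ by duality together with the equilibrium relation $(P^n)^*\mu=d^n\mu$. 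You instead approximate $\sigma$ in $L^1(\mu)$ by a continuous $\psi$, invoke the previous (continuous--continuous) lemma for the main term $B_n$, and control the error term $A_n$ via the $P$-invariance of $\mu$, which you correctly deduce from $P^*\mu=d\mu$ (in effect $P_*P^*=d\cdot\mathrm{id}$ on measures). The trade-off is this: the paper's argument is shorter and never needs invariance against merely measurable integrands, because $\sigma$ enters only through the measure $\sigma\,d\mu$; your argument needs the extra step you rightly flag, namely extending the invariance (or duality) identity beyond continuous test functions — cleanest is to note $\int\psi\circ P\,d\mu=\int\psi\,d\mu$ for all continuous $\psi$ (by duality applied to the continuous function $\psi\circ P$, using $P_*(\psi\circ P)=d\,\psi$) and then conclude $\mu(P^{-1}B)=\mu(B)$ for all Borel $B$ by Riesz representation, after which $\int h\circ P^n\,d\mu=\int h\,d\mu$ for bounded measurable $h$ is the standard push-forward change of variables. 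In exchange, your route is a self-contained soft-analysis argument that uses only the statement (not the internal pointwise estimate) of the preceding lemma, plus the standard and worth-recording fact that an equilibrium measure is $P$-invariant.
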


\begin{proof}
We note that $\sigma d\mu$ is a Borel measure.
We get that

$$
\int \frac{P^n_*\phi}{d^n} \sigma d\mu \rightarrow (\int \phi d\mu)(\int \sigma d\mu)\;{\mbox{by}}(*)
$$

But 
$$
\int \frac{P^n_*\phi}{d^n} \sigma d\mu = \int \phi \frac{(P^n)^*(\sigma d\mu)}{d^n}
= \int \phi \sigma(P^n) \frac{(P^n)^*(\mu)}{d^n}= \int \phi \sigma(P^n) d\mu.
$$

\end{proof}

Finally we extend the lemma to two measurable functions, $\lambda, \sigma:$

\begin{lemma}
Let $\lambda,\sigma$ be two bounded measurable functions. Then
$$\int \lambda(z) \sigma(P^n)(z) d\mu(z) \rightarrow (\int \lambda d\mu )(\int \sigma d\mu).$$
\end{lemma}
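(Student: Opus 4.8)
The plan is to deduce this from the preceding lemma by approximating the ``free'' factor $\lambda$ (the one not composed with $P^n$) by a continuous function in $L^1(\mu)$, while leaving the factor $\sigma\circ P^n$ untouched and using only that it is uniformly bounded by $\|\sigma\|_\infty$ and that $\mu$ is a probability measure. Since the preceding lemma already identifies the limit of $\int\phi\,\sigma(P^n)\,d\mu$ as $(\int\phi\,d\mu)(\int\sigma\,d\mu)$ for continuous $\phi$, no new facts about the dynamics are needed here.

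First I would fix $\epsilon>0$ and invoke the density of $\mathcal{C}(\overline{\mathbb C})$ in $L^1(\mu)$: because $\mu$ is a regular Borel probability measure on the compact metric space $\overline{\mathbb C}$, one approximates $\lambda$ first by a finite simple function, then replaces each Borel set occurring in it by a slightly smaller compact set and a slightly larger open set (inner/outer regularity), and finally interpolates by Urysohn's lemma, obtaining a continuous $\phi$ with $\int|\lambda-\phi|\,d\mu<\epsilon$.

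Then I would estimate by the triangle inequality in three pieces. From $|\sigma\circ P^n|\le\|\sigma\|_\infty$ one gets, uniformly in $n$,
\[
\Bigl|\int \lambda\,\sigma(P^n)\,d\mu-\int \phi\,\sigma(P^n)\,d\mu\Bigr|\le \|\sigma\|_\infty\int|\lambda-\phi|\,d\mu<\|\sigma\|_\infty\,\epsilon .
\]
By the preceding lemma (continuous $\phi$, bounded measurable $\sigma$), $\int\phi\,\sigma(P^n)\,d\mu\to(\int\phi\,d\mu)(\int\sigma\,d\mu)$; and since $\mu$ is a probability measure, $|\int\phi\,d\mu-\int\lambda\,d\mu|\le\int|\lambda-\phi|\,d\mu<\epsilon$, whence
\[
\Bigl|\Bigl(\int\phi\,d\mu\Bigr)\Bigl(\int\sigma\,d\mu\Bigr)-\Bigl(\int\lambda\,d\mu\Bigr)\Bigl(\int\sigma\,d\mu\Bigr)\Bigr|\le\|\sigma\|_\infty\,\epsilon .
\]
Combining the three estimates yields $\limsup_{n}\bigl|\int\lambda\,\sigma(P^n)\,d\mu-(\int\lambda\,d\mu)(\int\sigma\,d\mu)\bigr|\le 2\|\sigma\|_\infty\,\epsilon$, and letting $\epsilon\to0$ finishes the proof.

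The only step with genuine content is the $L^1(\mu)$-density of continuous functions, which I expect to be the main (though standard) obstacle; everything else is bookkeeping with the triangle inequality and the bound on $\sigma\circ P^n$. As an immediate consequence, taking $\lambda=\chi_E$ and $\sigma=\chi_F$ gives $\sigma\circ P^n=\chi_{P^{-n}(F)}$, so $\mu(E\cap P^{-n}(F))\to\mu(E)\mu(F)$; that is, $\mu$ is mixing.
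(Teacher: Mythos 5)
Your proposal is correct and follows essentially the same route as the paper: approximate $\lambda$ in $L^1(\mu)$ by a continuous $\phi$, apply the preceding lemma (continuous $\phi$, bounded measurable $\sigma$), and absorb the two approximation errors via $\sup|\sigma|$, giving the $2\epsilon\sup|\sigma|$ bound before letting $\epsilon\to 0$. The only difference is that you spell out the standard density argument (regularity plus Urysohn) which the paper simply asserts.
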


\begin{proof}
Pick $\epsilon>0.$ Then there exists a continuous function $\phi$ so that
$$\int |\lambda-\phi| d\mu<\epsilon.$$

\bea
|\int \lambda \sigma(P^n)(z) d\mu(z)-\int \lambda d\mu \int \sigma d\mu| & \leq &
|\int \phi \sigma(P^n)(z) d\mu(z)\\
& - & \int \phi d\mu \int \sigma d\mu|+2\epsilon \sup |\sigma|\\
& \leq & \epsilon+2\epsilon \sup |\sigma|, \; n\; \mbox{large}\\
\eea

\end{proof}

\begin{theorem}
The measure $\mu$ is mixing.
\end{theorem}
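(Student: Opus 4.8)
The plan is to deduce mixing from the preceding Lemma (the last lemma in the excerpt, on two bounded measurable functions) by specializing to indicator functions. Given Borel sets $E,F$, set $\lambda=\mathbf{1}_E$ and $\sigma=\mathbf{1}_F$. These are bounded measurable functions, so the Lemma applies directly. The only point requiring care is the translation between the measure-theoretic quantity $\mu(E\cap P^{-n}(F))$ appearing in the definition of mixing and the integral $\int \lambda(z)\,\sigma(P^n)(z)\,d\mu(z)$ appearing in the Lemma.

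The key identities I would write out are: first, $\sigma(P^n(z))=\mathbf{1}_F(P^n(z))=\mathbf{1}_{P^{-n}(F)}(z)$, so that the integrand $\lambda(z)\sigma(P^n(z))=\mathbf{1}_E(z)\mathbf{1}_{P^{-n}(F)}(z)=\mathbf{1}_{E\cap P^{-n}(F)}(z)$; integrating against $\mu$ gives $\int \lambda\,\sigma(P^n)\,d\mu = \mu(E\cap P^{-n}(F))$. Second, $\int \lambda\,d\mu=\mu(E)$ and $\int \sigma\,d\mu=\mu(F)$. Plugging these three evaluations into the conclusion of the Lemma yields exactly $\mu(E\cap P^{-n}(F))\to\mu(E)\mu(F)$, which is the definition of $\mu$ being mixing.

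So the proof is essentially a one-line application:
\begin{proof}
Let $E,F$ be Borel sets and put $\lambda=\mathbf 1_E$, $\sigma=\mathbf 1_F$, which are bounded measurable functions. Since $\sigma(P^n(z))=\mathbf 1_{P^{-n}(F)}(z)$, we have $\lambda(z)\sigma(P^n(z))=\mathbf 1_{E\cap P^{-n}(F)}(z)$, hence $\int \lambda\,\sigma(P^n)\,d\mu=\mu\bigl(E\cap P^{-n}(F)\bigr)$, while $\int\lambda\,d\mu=\mu(E)$ and $\int\sigma\,d\mu=\mu(F)$. By the previous Lemma,
$$
\mu\bigl(E\cap P^{-n}(F)\bigr)=\int \lambda(z)\,\sigma(P^n)(z)\,d\mu(z)\longrightarrow \Bigl(\int\lambda\,d\mu\Bigr)\Bigl(\int\sigma\,d\mu\Bigr)=\mu(E)\,\mu(F).
$$
Thus $\mu$ is mixing.
\end{proof}

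I do not anticipate any real obstacle here, since all the analytic work — approximating measurable functions by continuous ones and invoking duality together with the convergence $\frac{(P^n)_*\phi}{d^n}\to\int\phi\,d\mu$ — has already been carried out in the chain of lemmas leading up to this statement. The only thing to be slightly careful about is that mixing as defined uses $\mu(E)=\int\mathbf 1_E\,d\mu$ (finite since $\mu$ is a probability measure), so the boundedness hypothesis of the Lemma is genuinely satisfied; there is nothing subtle beyond bookkeeping with indicator functions.
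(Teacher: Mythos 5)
Your proof is correct and is essentially identical to the paper's: both apply the preceding lemma on two bounded measurable functions with $\lambda=\chi_E$, $\sigma=\chi_F$, using $\int \lambda\,\sigma(P^n)\,d\mu=\mu(E\cap P^{-n}(F))$. No issues.
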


\begin{proof}
Let $E,F$ be two Borel sets. Set $\lambda:=\chi_E, \sigma:=\chi_F$, the characteristic functions.
Then $\mu(E \cap P^{-n}(F))= \int \lambda \sigma(P^n) d\mu\rightarrow (\int \lambda d\mu))
(\int \sigma d\mu)=\mu(E) \mu(F).$
\end{proof}

\begin{definition}
A probability measure $\nu$ is ergodic for $P$ if for every invariant Borel set $E$,
i.e. $\mu(P^{-1})E \setminus E), \mu(E \setminus P^{-1}(E)=0,$ we have that
$\mu(E)$ is zero or one.
\end{definition}

\begin{theorem}
The measure $\mu$ is ergodic.
\end{theorem}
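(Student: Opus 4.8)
The plan is to derive ergodicity directly from the mixing property just established. Let $E$ be an invariant Borel set, i.e. $\mu(P^{-1}(E)\setminus E)=0$ and $\mu(E\setminus P^{-1}(E))=0$, equivalently $\mu(P^{-1}(E)\,\triangle\, E)=0$ where $\triangle$ denotes symmetric difference.

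First I would observe that invariance modulo $\mu$-null sets is inherited by all iterates. Since taking preimages commutes with symmetric differences, one has $P^{-(k+1)}(E)\,\triangle\, P^{-k}(E) = P^{-k}\!\left(P^{-1}(E)\,\triangle\, E\right)$, and applying the elementary inclusion $A\,\triangle\, C\subseteq(A\,\triangle\, B)\cup(B\,\triangle\, C)$ telescopically gives
$$
P^{-n}(E)\,\triangle\, E\ \subseteq\ \bigcup_{k=0}^{n-1} P^{-k}\!\left(P^{-1}(E)\,\triangle\, E\right).
$$
Because $\mu$ is an equilibrium measure it is $P$-invariant, so $\mu(P^{-k}(A))=\mu(A)$ for every Borel set $A$ and every $k$; hence each term on the right has $\mu$-measure zero, and therefore $\mu(P^{-n}(E)\,\triangle\, E)=0$ for all $n\geq 1$. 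In particular $\mu(E\cap P^{-n}(E))=\mu(E)$ for every $n$.

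Next I would apply the mixing theorem with both Borel sets taken equal to $E$, obtaining $\mu(E\cap P^{-n}(E))\to\mu(E)\,\mu(E)$ as $n\to\infty$. Since the left-hand side is identically $\mu(E)$ by the previous step, letting $n\to\infty$ yields $\mu(E)=\mu(E)^2$, so $\mu(E)\in\{0,1\}$, which is exactly ergodicity.

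I do not expect any genuine obstacle: this is the standard implication "mixing $\Rightarrow$ ergodic", and the only point requiring a line of verification is the first step, namely that $\mu(P^{-1}(E)\,\triangle\, E)=0$ forces $\mu(P^{-n}(E)\,\triangle\, E)=0$, which uses nothing beyond $P$-invariance of $\mu$ and the behavior of symmetric differences under $P^{-1}$.
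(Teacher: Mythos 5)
Your proof is correct and follows essentially the same route as the paper: apply the mixing theorem with both sets equal to $E$ and use $\mu(E\cap P^{-n}(E))=\mu(E)$ to conclude $\mu(E)=\mu(E)^2$. The only difference is that you carefully justify, via the telescoping symmetric-difference argument and $P$-invariance of $\mu$, the step $\mu(P^{-n}(E)\,\triangle\,E)=0$, which the paper uses implicitly when writing $\mu(E\cap E)=\mu(E\cap P^{-n}(E))$.
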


\begin{proof}
We have that $\mu(E)= \mu(E \cap E)= \mu(E \cap P^{-n}(E))
\rightarrow (\mu(E))^2.$ So $\mu(E)$ can only be zero or 1.
\end{proof}

\newpage




\section{TOPICS ON FATOU SETS}

We started the course with studying the Fatou set, attracting basins, including superattracting basins.
Then we moved to study the Julia set. The key there is to study invariant measures.
We now return to study the Fatou set for the rest of the course.

We will investigate Siegel discs. This will use important techniques from dynamics, socalled small denominators. Text:  Chapter VI of Carleson-Gamelin.

Let $\lambda,|\lambda|=1$ be a complex number. We say that $\lambda$ is diophantine if the following holds:
There exist constants $c>0$ and $\mu>1$ so that for all integers $n\geq 1$ we have
$$
|\lambda^n-1| \geq \frac{c}{n^{\mu}}.
$$

\begin{theorem}(Siegel)
Suppose $P$ is a polynomial with $P(0)=0$ and with $P'(0)=\lambda$ where $\lambda$ is diophantine. Then there is a holomorphic conjugation $\phi, \phi(0)=0, \phi'(0)=1$ so that
$\phi (P(z))=\lambda \phi(z)$ on a neighborhood of the origin.
\end{theorem}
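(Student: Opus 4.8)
The plan is to construct the conjugating map $\phi$ (equivalently its inverse $\psi$ with $\psi(0)=0$, $\psi'(0)=1$, solving $P(\psi(w)) = \psi(\lambda w)$) as a power series and prove convergence on a definite disc via the classical majorant/Newton-scheme argument, controlling the small denominators $\lambda^n - 1$ by the diophantine hypothesis.

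\textbf{Setup and formal solution.} First I would write $P(z) = \lambda z + \sum_{k\geq 2} a_k z^k$ and look for $\psi(w) = w + \sum_{k\geq 2} b_k w^k$ with $P(\psi(w)) = \psi(\lambda w)$. Comparing coefficients of $w^n$ gives a recursion of the form $(\lambda^n - \lambda) b_n = (\text{polynomial in } a_2,\dots,a_n \text{ and } b_2,\dots,b_{n-1})$, so $b_n$ is determined uniquely once we divide by $\lambda^n - \lambda = \lambda(\lambda^{n-1}-1)$, which is nonzero for all $n\geq 2$ since $\lambda$ is diophantine (in particular not a root of unity). This produces a unique formal power series $\psi$; the whole content of the theorem is that it converges.

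\textbf{Convergence via small denominators.} The standard route is the Siegel (or the cleaner Kolmogorov--Arnold--Moser-type) iteration, but at the level of these lectures I would follow the majorant-series method as in Carleson--Gamelin, Chapter VI. The diophantine bound $|\lambda^{n}-1|\geq c\, n^{-\mu}$ gives $|\lambda^n - \lambda|^{-1} \leq C n^{\mu}$, so each division in the recursion costs only a polynomial factor in $n$. One then sets up a majorant: if $\Phi(w) = \sum B_n w^n$ with $B_n \geq |b_n|$ satisfies a recursion dominating the true one, convergence of $\Phi$ on a disc implies convergence of $\psi$. The point is that the sum of the small-denominator losses, $\prod$ or $\sum$ of factors like $C n^{\mu}$ along a "tree" of depth $\log n$, can be absorbed because $\sum (\log n)^{?}\cdot(\text{geometric})$ still converges; concretely one shows $|b_n| \leq M^{n}$ for some $M$, giving a positive radius of convergence. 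Then $\phi := \psi^{-1}$ exists locally (since $\psi'(0)=1\neq 0$) and satisfies $\phi(P(z)) = \lambda\phi(z)$.

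\textbf{Main obstacle.} The genuinely delicate step is the bookkeeping that shows the accumulated small-denominator factors do not destroy the geometric growth of the coefficients — i.e. proving the estimate $|b_n| \leq M^n$ by induction, where the inductive hypothesis must be strong enough (a precise constant, not just "geometric") to close. This is where the diophantine exponent $\mu$ and the combinatorics of how the nonlinear terms $a_k$ combine partitions of $n$ genuinely enter; a naive induction loses, and one needs either the majorant with a carefully chosen auxiliary function (Siegel's original $\theta$-function trick, bounding $\sum_{j} \frac{1}{|\lambda^{j}-1|}$ type sums) or the quadratically-convergent Newton scheme to beat the losses. Everything else — the formal recursion, nonvanishing of denominators, passing from $\psi$ to $\phi$ — is routine. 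I would present the formal part in full and then either carry out the majorant estimate or cite Carleson--Gamelin for the convergence bookkeeping, depending on how self-contained the notes aim to be.
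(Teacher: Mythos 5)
Your strategy (solve the conjugacy equation $P(\psi(w))=\psi(\lambda w)$ as a formal power series, then prove convergence by a majorant argument using $|\lambda^n-1|\ge c n^{-\mu}$) is the classical Siegel route and is in principle viable, but as written it has a genuine gap exactly at the point you yourself flag: the estimate $|b_n|\le M^n$ is asserted, not proved, and the assertion that the accumulated small-denominator losses ``can be absorbed'' is precisely the entire analytic content of the theorem. The formal recursion and the nonvanishing of $\lambda^n-\lambda$ are routine, as you say; what remains is not bookkeeping that can be waved at or outsourced to a citation, because a naive induction on the recursion really does fail (each coefficient $b_n$ is built from products of earlier $b_k$'s, and the denominators $\lambda^{k}-1$ can be small simultaneously along many terms of the convolution). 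Closing this requires either Siegel's counting argument controlling how often $|\lambda^k-1|$ can be small for $k\le n$ (so that the product of denominator losses along any chain is at most $C^n$), or abandoning the term-by-term approach altogether. Writing ``one shows $|b_n|\le M^n$\dots or cite Carleson--Gamelin'' leaves the theorem unproved.

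For comparison, the proof in these notes avoids the coefficient combinatorics entirely by using the quadratically convergent (KAM/Newton) scheme, which is the second option you mention but do not carry out. One solves only the \emph{linearized} equation $\hat{\psi}(\lambda z)-\lambda\hat{\psi}(z)=\hat{f}(z)$, where the diophantine condition gives the clean single-step estimate $|\hat{\psi}'|\le c_0\delta/\eta^{\mu+1}$ on a disc shrunk by a factor $(1-\eta)$, and then shows that the conjugated map $g=\psi^{-1}\circ f\circ\psi$ has new error of size $\delta'\sim c_0\delta^2/\eta^{\mu+2}$: the quadratic gain $\delta^2$ beats the fixed polynomial loss in $\eta$ at every stage. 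Iterating with $\eta_{n+1}=\eta_n/2$, $r_{n+1}=r_n(1-5\eta_n)$ keeps a disc of positive radius, and a normal-families limit of the composed conjugations gives $\phi$. If you want a self-contained argument at the level of these lectures, you should either reproduce that iteration (each step is an elementary Cauchy estimate) or actually carry out Siegel's small-denominator counting lemma for your majorant; as it stands, your proposal defers the only hard step.
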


\begin{proof}

For the proof we will instead construct $h(z)=\phi^{-1}(z).$ Then the functional equation becomes
$$
h(\lambda z)= P(h(z)).
$$
 where $h$ is biholomorphic near the origin and $h'(0)=1.$ The function $h$ will be constructed as a limit from an inductive process.

Let $f(z)= \lambda z+ \hat{f}(z).$ Let $h(z)=z+\hat{h}(z)$ where $\hat{f}$ and $\hat{h}$ vanish to second order. We then get the equation:

\bea
h(\lambda z) & = & f(h(z)) \\
h(\lambda z) & = & \lambda h(z) +\hat{f}(h(z))\\
\lambda z + \hat{h}(\lambda z) & = & \lambda z+\lambda \hat{h}(z)+\hat{f}(h(z))\\
\hat{h}(\lambda z)-\lambda \hat{h}(z) & = & \hat{f}(h(z))\; (**)\\
\eea
 
Next we introduce the inductive construction.

Let $\psi$ be a coordinate change in a neighborhood of the origin,
$\psi(z)= z+\hat{\psi}(z)$ where $\psi(z)=\mathcal O(z^2).$ The construction is to find
$\psi$ so that if 
$$\psi^{-1}\circ f\circ \psi=g(z)=\lambda z + \hat{g}(z) \; (***),
$$ then $\hat{g}$ is smaller than $\hat{f}.$ We use an approximation to (**).

$$
\hat{\psi}(\lambda z) -\lambda \hat{\psi}(z) = \hat{f}(z). (****)
$$

We replace $h(z)$ by $z$ in the right side of (**). Then we solve for $\hat{\psi}$ and estimate the corrsponding $\hat{g}.$ Let $\hat{f}(z)=\sum_{n=2}^\infty b_n z^n.$
Set $\hat{\psi}(z)= \sum_{j=2}^\infty a_j z^j$. We solve for the coefficients and investigate convergence below.

\bea
\sum_{j=2}^\infty a_j (\lambda z)^j-\lambda \sum_{j=2}^\infty a_j z^j & = & \hat{f}(z)=\sum_{j=2}^\infty b_j z^j\\
a_j(\lambda^j-\lambda) & = & b_j\\
a_j & = & \frac{b_j}{\lambda^j-\lambda}\\
\eea

We will make some assumptions for our estimates:

\noindent a) $\frac{1}{|\lambda^n-1|} \leq c_0 \frac{n^\mu}{\mu !}$\\
b) $|\hat{f}'(z)| <\delta, |z|<r$\\

Here we have two parameters, $\delta, r$ and then after estimating we will get new values of these parameters for the next function. We will let $\eta$ be a parameter, $0<\eta<1/5$. We estimate $\hat{\psi}$ in $\{|z|< r(1-\eta)\}.$

Since $\hat{f}'(z)=\sum_{j=2}jb_j z^{j-1}$ we get from Cauchy estimates that 

$$
|b_j| \leq \frac{\delta}{jr^{j-1}}.
$$

We will use a calculation of an infinite sum:

\bea
\sum_{n=0}^\infty x^n & = & \frac{1}{1-x}, |x|<1\\
& \Rightarrow & \\
\sum_{n=\mu}^\infty n(n-1)\cdots (n-\mu+1) x^{n-\mu} & = & \frac{\mu !}{(1-x)^{\mu+1}}\\
& \Rightarrow & [n-\mu=j-1]\\
\sum_{j=1}^\infty j (j+1) \cdots (j+\mu-1) x^{j-1} & = & \frac{\mu !}{(1-x)^{\mu+1}}\\
& \Rightarrow & [x=1-\eta]\\
\sum_{j=1}^\infty j (j+1) \cdots (j+\mu-1) (1-\eta)^{j-1} & = & \frac{\mu !}{\eta^{\mu+1}}\\
\eea

Next we estimate $\hat{\psi}'$ including the convergence radius.
Suppose that $|z| <(1-\eta) r.$

\bea
|\hat{\psi}'(z)| & \leq & \sum_{j=2}^\infty j|a_j| |z|^{j-1}\\
& \leq & \sum_{j=2}^\infty \frac{j|b_j|}{|\lambda^j-\lambda|} ((1-\eta)r)^{j-1}\\
& \leq & \sum_{j=2}^\infty \frac{j\delta}{jr^{j-1}}\frac{1}{|\lambda^{j-1}-1|} r^{j-1}(1-\eta)^{j-1}\\
& \leq & \sum_{j=2}^\infty \delta c_0 \frac{(j-1)^\mu}{\mu !}(1-\eta)^{j-1}\\
& < & \frac{c_0 \delta}{\mu !}\sum_{j=1}^\infty j(j+1)\cdots (j+\mu-1)(1-\eta)^{j-1}\\
& = & \frac{c_0 \delta}{\eta^{\mu+1}}\\
\eea

We add an extra condition:

c) $c_0\delta < \eta^{\mu+2}$.

Then we can conclude that $|\hat\psi'|\leq \eta$ when $|z|<(1-\eta) r.$
Since $|\hat\psi'|\leq \eta$ and $\hat\psi(0)=0$ we get that $|\hat\psi|<\eta r$ on $|z|<(1-\eta)r.$
We therefore get estimates on $\psi=z+\hat{\psi}.$

\noindent 1) $\psi(\{|z|< (1-4\eta)r\})\subset \{|z|< (1-3\eta) r\}$\\
2) $\{|z|< (1-2\eta)r\}\subset \psi(\{|z|< (1-\eta)r\})$ and map has welldefined inverse there (argument principle)\\

Let $g=\psi^{-1}\circ f \circ \psi$ on $\{|z|<(1-4\eta)r\}.$ 
We see that $\psi(z)$ is in $\{|z|<(1-3\eta)r\}$ and hence if add another condition:\\

d) $\delta<\eta$\\

$f(\psi(z))\in \{|z|<(1-2\eta)r\}.$ But then $\psi^{-1}\circ f\circ \psi(z)\in \{|z|<(1-\eta)r\}.$

We next estimate the function $\hat{g}.$ Recall from (***)
\bea
\psi^{-1}\circ f\circ \psi(z) & = & \lambda z +\hat{g}(z)\\
& \Rightarrow & \\
f(\psi) & = & \psi (\lambda z+\hat{g}(z))\\
\lambda \psi+\hat{f}(\psi) & = & \lambda z+\hat{g}(z)+ \hat{\psi}(\lambda z+\hat{g}(z))\\
\lambda z +\lambda \hat{\psi}+\hat{f}(\psi) & = & \lambda z+\hat{g}(z)+ \hat{\psi}(\lambda z+\hat{g}(z))\\
\lambda \hat{\psi}+\hat{f}(\psi) & = & \hat{g}(z)+ \hat{\psi}(\lambda z+\hat{g}(z))\\
\hat{g}(z)+ \hat{\psi}(\lambda z+\hat{g}(z)) & = & \lambda \hat{\psi}+\hat{f}(z+\hat\psi) \\
& \Rightarrow & [{\mbox{using (***)}}]\\
\hat{g}(z)+ \hat{\psi}(\lambda z+\hat{g}) & = & \hat{\psi}(\lambda z)-\hat{f}+\hat{f}(z+\hat\psi) \\
\hat{g}(z) & = &  \hat{\psi}(\lambda z)-\hat{\psi}(\lambda z+\hat{g}(z)) +\hat{f}(z+\hat\psi)-\hat{f}(z)\\
\eea

We estimate $\hat{g}$ when $|z|<r(1-4\eta).$ Let $C$ denote the maximum value.

We get then:
\bea
C & \leq & \sup |\hat\psi'| C+\sup |\hat{f}(z+\hat\psi)-\hat{f}(z)|\\
& \leq & C\eta+ \delta \frac{c_0 \delta}{\eta^{\mu+1}} r\\
& \Rightarrow & \\
C & \leq & \frac{ c_0 \delta^2 r}{\eta^{\mu+1} }\frac{1}{1-\eta}\\
\eea

Using Cauchy estimates we get that for $|z|<(1-5\eta)r$,

$$
|\hat{g}'| \leq \frac{ c_0 \delta^2 }{\eta^{\mu+2} }\frac{1}{1-\eta}.
$$

It is the term $\delta^2$ that is the key to the construction.

We summarize what we have done sofar in the argument:
We start with a holomorphic function defined on $\{|z|<r\}$.
The function $|\hat{f}'|\leq \delta.$ It is replaced by $g$ on $\{|z|<r(1-5\eta)$ with the estimate
$|\hat{g}'|\leq  \frac{ c_0 \delta^2 }{\eta^{\mu+2} }\frac{1}{1-\eta}.$
We needed to assume that $0<\eta<1/5, c_0\delta <\eta^{\mu+2}, \delta<\eta.$
If we choose a $c_1$ small enough and require that $\eta<c_1$, then the first condition is satisfied,
and the third condition follows from the second one. So we only need to worry about the second condition. We introduce the constant $\tilde{c}_0= c_0 2^{\mu+2}(1-\eta).$
Then we can write 
$$
|\hat{g}'|\leq  \frac{ \tilde{c}_0 \delta^2 }{\eta^{\mu+2} }\frac{1}{2^{\mu+2}}.
$$

Now we will describe the inductive data. We start with $\eta_0,\delta_0$ satisfying the conditions
and define inductively:

Choose first a polynomial $P$ with $P(0)=0$, $P'(0)=\lambda$ diophantine$\eta_0,\delta_0$ 
small.\\

Inductively define

\bea
r_{n+1} & = &r_n(1-5\eta_n)\\
\eta_{n+1} & = & \eta_n/2\\
\delta_{n+1} & = & c_0 \delta_n^2 \eta_n^{-\mu-2}2^{-\mu-2}\\
\eea

We verify that $c_0\delta^n \leq \eta_n^{\mu+2}.$

We then get inductively defined maps $\{\psi_n,g_n\}.$ 
Here $g_0=P$ and $g_n= \psi_n^{-1} \circ g_{n-1}\circ \psi_n$ 
Then the functions $g_n$ is a normal family on a disc with positive radius and
taking limits we get the theorem.

\end{proof}

\newpage



We have shown that if $0$ is a neutral fixed point with multiplier $\lambda$ with $\lambda$ diophantine, then the map is conjugate to rotation. We will now give an example where $\lambda$ is not diophantine, and not a root of unity, and the map is not conjugate to rotation.

\begin{theorem}
There exists a $\lambda=e^{2\pi i \theta}$ which is not a root of unity, so that there is no polynomial $P$ of degree $d\geq 2$ with $P(0)=0, P'(0)=\lambda$ and $P$ is conjugate with rotation by $\lambda$.
\end{theorem}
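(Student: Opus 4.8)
The plan is to run Cremer's classical argument: if $P$ were linearizable at $0$, its fixed points of period $n$ could not all be forced to stay a definite distance $\rho>0$ away from $0$, and the product formula for the roots of $P^n(z)-z$ would then give a lower bound on $|\lambda^n-1|$ of the shape $\rho^{\,d^n-1}$ valid for every $n$; I would defeat this by choosing $\theta$ to be a Liouville‑type number whose rational approximations are fast enough to beat $\rho^{\,d^n}$ for all degrees $d$ at once.

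First I would normalize: conjugating $P$ by $z\mapsto cz$ with $c^{d-1}=1/a_d$ (where $a_d$ is the leading coefficient) changes neither the multiplier $P'(0)=\lambda$ nor the property of being holomorphically conjugate to a rotation, so I may assume $P$ is monic, $P(z)=z^d+\cdots$. Suppose, for contradiction, that there is a biholomorphic $h$ near $0$ with $h(0)=0$, $h'(0)=1$ and $h(\lambda w)=P(h(w))$ (this is $\phi^{-1}$ in the notation of Siegel's theorem). Choose $r>0$ so small that $h$ is defined and injective on $\{|w|\le r\}$ and the relation holds there; since $|\lambda|=1$ the disc $\{|w|<r\}$ is $\lambda$‑invariant, so $U:=h(\{|w|<r\})$ satisfies $P(U)\subseteq U$. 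Moreover, if $z=h(w)\in U$ and $P^n(z)=z$, then $h(\lambda^n w)=P^n(h(w))=h(w)$, so $\lambda^n w=w$; as $\lambda$ is not a root of unity this forces $w=0$, i.e. $0$ is the only periodic point of $P$ in $U$. Fix $\rho\in(0,1)$ with $\overline{D(0,\rho)}\subset U$.

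Now for each $n\ge1$ I would write $P^n(z)-z=z\,g_n(z)$, where $g_n$ is a monic polynomial of degree $d^n-1$ with $g_n(0)=(P^n)'(0)-1=\lambda^n-1\ne0$ (using that $P$ is monic and $(P^n)'(0)=\lambda^n$). The roots of $g_n$, counted with multiplicity, are exactly the $d^n-1$ nonzero fixed points of $P^n$, and by the previous paragraph none lies in $\overline{D(0,\rho)}$, so each has modulus $>\rho$. Since for a monic polynomial the product of the roots is $\pm$ its constant term,
$$|\lambda^n-1|=\prod_j|z_j|>\rho^{\,d^n-1}\qquad\text{for every }n\ge1.$$
It remains to choose $\lambda$ so that this fails. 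Take $\theta=[0;a_1,a_2,\dots]$ with convergents $p_k/q_k$, the partial quotients chosen so large that $|q_k\theta-p_k|<2^{-q_k^{\,q_k}}$ for all $k$ (possible since $|q_k\theta-p_k|<1/a_{k+1}$); then $\theta$ is irrational, $\lambda:=e^{2\pi i\theta}$ is not a root of unity, and $|\lambda^{q_k}-1|=|e^{2\pi i(q_k\theta-p_k)}-1|\le 2\pi|q_k\theta-p_k|<2\pi\cdot2^{-q_k^{\,q_k}}$. Given any hypothetical $P$ as above, with its degree $d$ and its $\rho\in(0,1)$: once $q_k>d$ one has $q_k^{\,q_k}/d^{q_k}=(q_k/d)^{q_k}\to\infty$, hence $2\pi\cdot2^{-q_k^{\,q_k}}\le\rho^{\,d^{q_k}-1}$ for all large $k$, contradicting the displayed inequality at $n=q_k$. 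Thus no such $P$ exists.

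The dynamical input — the invariant neighbourhood and the product‑of‑roots estimate — is short and standard, and I would not belabour $|e^{it}-1|\le|t|$, the bookkeeping with $g_n$, or the existence of continued fractions with prescribed rapid growth. The point that genuinely needs care, and that I expect a reader to want spelled out, is the uniformity in $d$: a single $\theta$ must work against polynomials of every degree at once, which is exactly why $|q_k\theta-p_k|$ is made smaller than $\rho^{\,d^{q_k}}$ for every $d$ simultaneously — arranged by letting it decay like $2^{-q_k^{\,q_k}}$, since $q_k\to\infty$ makes $q_k^{\,q_k}$ eventually dominate $d^{q_k}$ for any fixed $d$ and any fixed $\rho<1$.
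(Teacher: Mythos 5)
Your proof is correct and follows essentially the same route as the paper: the key step is the identical Cremer-type lemma bounding $|\lambda^n-1|$ from below by $\rho^{\,d^n-1}$ via the product of the nonzero fixed points of $P^n$, which are expelled from the linearization domain, followed by a Liouville-type choice of $\theta$ making $|\lambda^{n_k}-1|$ decay fast enough to beat $\rho^{\,d^{n_k}}$ uniformly in $d$. The only difference is cosmetic: you build $\theta$ by prescribing huge continued-fraction partial quotients, while the paper uses the lacunary series $\theta=\sum_k 2^{-q_k}$ with rapidly growing $q_k$.
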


We start with a lemma.

\begin{lemma}
Suppose that $P(z)=z^d+\cdots+\lambda z$ and that there is a biholomorphic map $h(w)$ defined on $\{|w|<2r\}$ such that $h(0)=0, h'(0)=1$ and $P= h(\lambda h^{-1}(z))$ and $h(\{|w|<2r\}) \supset \{|z|<r\}$. Then for every integer $n\geq 1$, $r^{d^n-1} \leq |\lambda^n-1|.$
\end{lemma}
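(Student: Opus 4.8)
The plan is to pass from $P$ to its iterates and convert the linearization relation into a statement about the fixed points of $P^n$. Since $|\lambda|=1$, the rotation $R_{\lambda}\colon w\mapsto\lambda w$ maps the disc $D:=\{|w|<2r\}$ bijectively onto itself, so the hypothesis $P=h\circ R_{\lambda}\circ h^{-1}$ on $V:=h(D)$ shows $P(V)\subseteq V$. First I would prove by induction on $n$ that
$$P^n(z)=h(\lambda^n h^{-1}(z)),\qquad z\in V,$$
i.e. $P^n=h\circ R_{\lambda^n}\circ h^{-1}$ on $V$; the inductive step just uses $h^{-1}(P(z))=\lambda\,h^{-1}(z)$ together with the fact that $|\lambda^k h^{-1}(z)|=|h^{-1}(z)|<2r$ keeps every argument inside the domain of $h$.

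Next I would count the fixed points of $P^n$ lying in $V$. For $z\in V$, injectivity of $h$ gives
$$P^n(z)=z\iff \lambda^n h^{-1}(z)=h^{-1}(z)\iff(\lambda^n-1)\,h^{-1}(z)=0.$$
If $\lambda^n=1$ this would force $P^n\equiv\mathrm{id}$ on the nonempty open set $V$, which is impossible because $P^n$ is a polynomial of degree $d^n\geq2$. Hence $\lambda^n\neq1$, and the only fixed point of $P^n$ in $V$ --- in particular the only one in $\{|z|<r\}\subseteq V$ --- is $z=0$. Consequently every nonzero fixed point $\zeta\in\mathbb{C}$ of $P^n$ satisfies $|\zeta|\geq r$.

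Finally I would extract the bound from Vieta's formulas. Since $P$ is monic, $P^n$ is monic of degree $d^n$ with $P^n(0)=0$ and $(P^n)'(0)=\lambda^n$ (by the chain rule, as $P^k(0)=0$ for all $k$), so
$$P^n(z)-z=z\,\bigl(z^{d^n-1}+c_{d^n-2}z^{d^n-2}+\cdots+c_1 z+(\lambda^n-1)\bigr),$$
and the bracketed factor is monic of degree $d^n-1$ whose roots $\zeta_1,\dots,\zeta_{d^n-1}$ (with multiplicity) are exactly the nonzero fixed points of $P^n$ --- they are nonzero since the constant term $\lambda^n-1$ is nonzero. Vieta gives $\prod_{j}|\zeta_j|=|\lambda^n-1|$, and combining with $|\zeta_j|\geq r$ yields $|\lambda^n-1|\geq r^{d^n-1}$, as desired.

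I do not expect a serious obstacle: the entire content is the dictionary between the linearizing coordinate and the fixed points of $P^n$, plus elementary bookkeeping of multiplicities. The step requiring the most care is establishing $P^n=h\circ R_{\lambda^n}\circ h^{-1}$ on all of $V$, rather than merely near $0$ --- this is what legitimizes counting the fixed points of $P^n$ inside $\{|z|<r\}$ --- and this is precisely where the hypotheses $|\lambda|=1$ and $h(D)\supseteq\{|z|<r\}$ enter, together with the elementary exclusion of the degenerate case $\lambda^n=1$.
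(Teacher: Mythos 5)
Your proof is correct and follows essentially the same route as the paper: both use the linearization to show $P^n$ has no fixed point in $0<|z|<r$, and then read off $|\lambda^n-1|$ as the product of the moduli of the $d^n-1$ nonzero fixed points of $P^n$ via the constant term of $(P^n(z)-z)/z$. Your only additions are bookkeeping the paper leaves implicit, namely the induction giving $P^n=h\circ R_{\lambda^n}\circ h^{-1}$ on $h(\{|w|<2r\})$ and the observation that $\lambda^n=1$ would force $P^n\equiv\mathrm{id}$, where the paper simply appeals to the irrationality of $\lambda$ from the surrounding theorem.
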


\begin{proof}
Let $\{z_j\}_{j=1}^{d^n}$ denote the fixed points of $P^n$ counted with multiplicity. Then
$P^n(z)-z=  z^{d^n}+\cdots +(\lambda^n-1) z=\Pi_{j=1}^{d^n} (z-z_j)$ Let us say that $z_{d^n}=0.$
Then $\Pi_{j=1}^{d^n-1} (z-z_j) = z^{d^n-1}+\cdots +(\lambda^n-1).$ 
Looking at the constant term on both sides, we see that
$|\Pi_{j=1}^{d^n-1}z_j| = |\lambda^n-1|.$
Note that if $0<|z|<r$, then $h(\lambda^n h^{-1}(z))-z$ has no zero because of the irrationality of $\lambda.$ So
all $|z_j|\geq r,j=1,\dots, d^n-1.$
It follows that $r^{d^n-1}\leq \Pi_{j=1}^{d^n-1} |z_j| \leq |\lambda^n-1|.$ 
\end{proof}

To prove the theorem, we find a $\lambda$ which is not a root of unity, but where
there is no $r>0$ so that $r^{d^n-1} \leq |\lambda^n-1|$ for all $n.$

We will define $\lambda=e^{2\pi i \theta}$ where
$\theta$ has the form $\theta=\sum_{k=1}^\infty \frac{1}{2^{q_k}}$ for integers
$1<q_1<q_2<\cdots$ and they grow rapidly. It is easy to see that such a number cannot be rational.

Then $\lambda^{2^{q^{_\ell}}}=e^{2\pi i \sum_{k=1}^\infty \frac{2^{q_\ell}}{2^{q_k}}}=
e^{2\pi i \sum_{k=\ell+1}^\infty \frac{2^{q_\ell}}{2^{q_k}}}.$
Hence we get $|\lambda^{2q^{\ell}}-1|\leq 4\pi \frac{2^{q_\ell}}{2^{q_{\ell+1}}}.$
We now choose $q_{\ell+1}$ so large that 
$$
4\pi \frac{2^{q_\ell}}{2^{q_{\ell+1}}}< \left(\frac{1}{\ell}\right)^{d^{2^{q_\ell}}}
d=2, \dots, \ell.
$$
Then for this $\lambda$ there is no polynomial of any degree for which there is a linearization
in any neighborhood of $0.$

This finishes the proof of the theorem.\\

\bigskip

We next show that almost all numbers $\lambda$ with modulus $1$ are diophantine.
Let $\mu>2.$ 

\begin{lemma}
Let $D$ consist of all $0<\theta<1$ such that $|\theta-p/q|<q^{-\mu}$ for infinitely many $q$.
Then the Lebesgue measure of $D$ is $0.$
\end{lemma}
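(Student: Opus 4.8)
This is a classic Borel–Cantelli argument. The plan is to fix a rational denominator $q$ and estimate the Lebesgue measure of the ``bad'' set at level $q$, namely the union over numerators $p$ of the intervals $(p/q - q^{-\mu}, p/q + q^{-\mu})$ that meet $(0,1)$. First I would observe that for each fixed $q$ there are at most $q+1$ relevant values of $p$ (those with $0 \le p \le q$), each contributing an interval of length $2q^{-\mu}$, so the measure of this level-$q$ set is at most $2(q+1)q^{-\mu} \le 4 q^{1-\mu}$. Since $\mu > 2$, the exponent $1-\mu < -1$, so the series $\sum_{q=1}^{\infty} 4 q^{1-\mu}$ converges.

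Next I would invoke the Borel–Cantelli lemma: $D$ is precisely the set of $\theta \in (0,1)$ lying in infinitely many of the level-$q$ sets, i.e. $D = \bigcap_{N \ge 1} \bigcup_{q \ge N} D_q$ where $D_q$ is the level-$q$ set described above. Because $\sum_q |D_q| < \infty$, the tail sums $\sum_{q \ge N} |D_q|$ tend to $0$, and $|D| \le |\bigcup_{q \ge N} D_q| \le \sum_{q \ge N} |D_q| \to 0$ as $N \to \infty$. Hence $|D| = 0$.

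There is really no serious obstacle here; the only point requiring a little care is the bookkeeping in the count of admissible numerators $p$ for a given $q$ — one should note that we only need those $p$ with $p/q$ within distance $q^{-\mu} < 1$ of the interval $(0,1)$, which still gives $O(q)$ of them, so the bound $4q^{1-\mu}$ (up to an irrelevant constant) stands. One might also remark that it is harmless to restrict to reduced fractions or not, since including extra $p$ only enlarges the set we are bounding. The convergence of $\sum q^{1-\mu}$ for $\mu > 2$ (indeed for any $\mu > 1$, though we only need $\mu>2$ here, or rather $\mu - 1 > 1$) is the standard $p$-series criterion.

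To connect back to the surrounding discussion: once this lemma is established, its complement shows that for a.e.\ $\theta$ there is a constant $c = c(\theta) > 0$ with $|\theta - p/q| \ge c q^{-\mu}$ for all rationals $p/q$, which is the Diophantine condition on $\theta$; a further elementary estimate translates this into the bound $|\lambda^n - 1| \ge c' n^{-\mu'}$ for $\lambda = e^{2\pi i \theta}$, so that Siegel's theorem applies to almost every rotation number. I would state the measure-zero conclusion as written and leave these consequences as the remark that motivated the lemma.
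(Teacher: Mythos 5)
Your proof is correct and follows essentially the same route as the paper: bound the level-$q$ set by $O(q^{1-\mu})$ intervals of length $2q^{-\mu}$, then note the tail sums $\sum_{q\ge k} q^{1-\mu}\le Ck^{2-\mu}\to 0$ since $\mu>2$ (the paper does this tail estimate directly, you phrase it as Borel--Cantelli, which is the same computation). The only slip is your parenthetical claim that $\sum q^{1-\mu}$ converges for any $\mu>1$; it requires $\mu>2$, as you yourself correct immediately afterward, so nothing is affected.
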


\begin{proof}
Fix $q$. Then the condition that $|\theta-p/q|<q^{-\mu}$ for some $p$
is that $\theta\in <p/q-q^{-\mu}, p/q+q^{-\mu}>$ for some $p=1,\dots,q.$ This has measure
$2q^{-\mu}\cdot q$. Hence $D$ has measure at most $\sum_{q=k}^\infty 2q^{1-\mu}\leq Ck^{2-\mu}.$ Hence the measure of $D$ is $0.$ 
\end{proof}

Suppose next that $\theta\notin D.$
Then there exists a $c>0$ so that $|\theta-p/q|>c/q^{-\mu}$ for all rational $p/q.$
Hence $|e^{2\pi i \theta}-1|>c' q^{1-\mu}$ and $e^{2\pi i \theta}$ is diophantine.\\

\bigskip

Next we discuss the case when $\lambda$ is a root of unity.
The simplest case is when $\lambda=1$ and there is a nonzero quadratic term.

\begin{theorem}
Let $P(z)=z+z^2+\cdots$. Then there exists a disc $|z+\epsilon|<\epsilon$ on which the iterates
converge uniformly to $0.$ The origin is in the Julia set.
\end{theorem}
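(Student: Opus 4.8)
The plan is to analyze $P$ near the parabolic fixed point $0$ using the standard change of coordinates $w = -1/z$ (the ``Fatou coordinate'' setup), which conjugates the map on a neighborhood of $0$ to a map of the form $Q(w) = w + 1 + O(1/w)$ defined near $w = \infty$. Indeed, if $z' = P(z) = z + z^2 + a_3 z^3 + \cdots$, then $w' = -1/z' = -1/(z + z^2 + \cdots) = -\frac{1}{z}\cdot\frac{1}{1 + z + \cdots} = -\frac1z(1 - z + O(z^2)) = w + 1 + O(1/w)$. So in the $w$-coordinate the map is asymptotically a translation by $1$ to the right. The disc $|z + \epsilon| < \epsilon$ (a disc internally tangent to the imaginary axis at $0$, lying in the left half-plane $\mathrm{Re}\, z < 0$) corresponds in the $w$-coordinate to a right half-plane $\{\mathrm{Re}\, w > 1/(2\epsilon)\}$, roughly speaking. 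The attracting direction for $P$ at $0$ is the negative real axis, which is exactly why one takes a disc in the left half-plane.

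The key steps, in order: First, I would make the coordinate change $w = -1/z$ and verify the expansion $Q(w) = w + 1 + O(1/w)$ on a region $\{\mathrm{Re}\, w > M\}$ for $M$ large. Second, I would show that for $M$ large enough, the half-plane $H_M = \{\mathrm{Re}\, w > M\}$ is forward-invariant under $Q$ and that $\mathrm{Re}\, Q^n(w) \to \infty$; concretely, $\mathrm{Re}\, Q(w) \geq \mathrm{Re}\, w + 1 - C/M \geq \mathrm{Re}\, w + 1/2$, so $\mathrm{Re}\, Q^n(w) \geq \mathrm{Re}\, w + n/2 \to \infty$, which back in the $z$-coordinate says $P^n(z) \to 0$. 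Third, to get \emph{uniform} convergence on the corresponding disc, I would note that $H_M$ is not quite the image of $\{|z+\epsilon|<\epsilon\}$, but a disc $\{|z + \epsilon| < \epsilon\}$ maps under $w = -1/z$ precisely onto a half-plane $\{\mathrm{Re}\, w > 1/(2\epsilon)\}$; choosing $\epsilon$ small so that $1/(2\epsilon) > M$ gives forward-invariance of this disc and $P^n \to 0$ there, and uniformity follows because $\mathrm{Re}\, Q^n(w) - \mathrm{Re}\, w \geq n/2$ uniformly, hence $|P^n(z)| = 1/|Q^n(w)| \leq 1/\mathrm{Re}\, Q^n(w) \leq 2/(2M + n) \to 0$ uniformly in $z$. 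Finally, for the statement that $0 \in J(P)$: if $0$ were in the Fatou set, then since $P'(0) = 1$, the fixed point is neutral but not linearizable — I would argue that on a Fatou component containing $0$ the iterates $P^n$ would form a normal family, so a subsequence converges to a holomorphic limit $g$ with $g(0) = 0$ and $g'(0) = \lim (P^n)'(0) = \lim 1 = 1$, making $g$ non-constant; but then $g$ would be a local biholomorphism near $0$ commuting appropriately with $P$, forcing $P$ to be conjugate to its linear part $z \mapsto z$, i.e.\ $P = \mathrm{id}$ near $0$, contradicting the nonzero quadratic term. (Alternatively: the disc $\{|z+\epsilon|<\epsilon\}$ is in the basin of $0$ and has $0$ on its boundary, while for $z$ on the positive real axis near $0$ one has $P(z) > z$ so $P^n(z)$ moves \emph{away} from $0$; thus $0$ is a boundary point between points attracted to $0$ and points repelled, so no neighborhood of $0$ can have normal iterates.)

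The main obstacle I expect is getting the \emph{uniform} convergence cleanly and making precise the correspondence between the tangent disc $\{|z+\epsilon|<\epsilon\}$ and a half-plane in the $w$-coordinate — in particular checking forward-invariance of that specific disc rather than just some half-plane far out, which requires choosing $\epsilon$ small enough relative to the error bound $C/M$ in $Q(w) = w + 1 + O(1/w)$. The Julia-set assertion is comparatively soft once one has the attracting-petal picture together with the observation that points on the other side (positive real axis) escape, so the fixed point cannot lie in the Fatou set.
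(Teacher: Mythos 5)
Your treatment of the first assertion is correct and is essentially the paper's own argument: conjugate by $w=-1/z$, obtain $Q(w)=w+1+\mathcal O(1/w)$, check that a far-out half-plane $\{\mathrm{Re}\,w>M\}$ is mapped into $\{\mathrm{Re}\,w>M+\tfrac12\}$, and identify $\{\mathrm{Re}\,w>1/(2\epsilon)\}$ with the disc $\{|z+\epsilon|<\epsilon\}$; the uniform bound $|P^n(z)|\le 1/\mathrm{Re}\,Q^n(w)\le 2/(2M+n)$ gives the uniform convergence, exactly as in the text.

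The Julia-set half, however, has a genuine gap in both variants you propose. In the primary one you pass from a subsequential limit $g$ with $g(0)=0$, $g'(0)=1$ to the claim that, since $g$ is a nonconstant local biholomorphism commuting with $P$, the map $P$ must be conjugate to its linear part $z\mapsto z$. That implication is false as stated: every iterate $P^m$ is a nonconstant local biholomorphism at $0$ with derivative $1$ that commutes with $P$, yet $P$ is not conjugate to the identity. The classical argument needs more, e.g.\ extracting $P^{n_{k+1}-n_k}\to \mathrm{id}$ and comparing coefficients (for $P(z)=z+z^2+\cdots$ one has $P^m(z)=z+mz^2+\cdots$, and $m\to\infty$ forbids convergence to the identity); none of this appears in your sketch. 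The parenthetical alternative does not close the gap either: ``$P(z)>z$ on the positive real axis'' tacitly assumes real coefficients (only the first two coefficients are prescribed), and even granting that, it only shows the orbit initially moves away from $0$ along the axis; it does not show such a point lies outside the basin of $0$ (after leaving a small interval the orbit may in principle return through the attracting petal and converge to $0$), so ``points repelled'' is not established and the jump to non-normality is unsupported.

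The repair is short and is precisely what the paper does, using pieces you already have. If $0$ were in the Fatou set, some subsequence $P^{n_k}$ would converge locally uniformly on a disc $\Delta(0,r)$ to a holomorphic $g$. On the open set $\Delta(0,r)\cap\{|z+\epsilon|<\epsilon\}$ you have already proved $P^n\to 0$, so $g\equiv 0$ there, hence $g\equiv 0$ on $\Delta(0,r)$ by the identity theorem; but $g'(0)=\lim (P^{n_k})'(0)=1$, a contradiction. So the missing observation is simply that the petal forces every limit function to vanish identically, after which your computation $g'(0)=1$ finishes the proof.
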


\begin{proof}
We conjugate using the map $w=u+iv=\phi(z)=-1/z.$ Then the polynomial $P$ is conjugate on $|z|<\delta$ with $Q(w)$ defined for $|w|>1/\delta.$
We get $Q(w)=\frac{-1}{\frac{-1}{w}+\frac{1}{w^2}+\cdots}=
\frac{w}{1-\frac{1}{w}+\cdots}=w+1+\mathcal O(\frac{1}{w})$.
It follows that if $r$ is large enough, then $Q(\{u> r\}) \subset \{u>r+1/2\}.$
In the $z$ coordinate, the set $\{u>s\}$ corresponds to the disc
$\{|z+\frac{1}{2s}|<\frac{1}{2s}\}.$
Hence for large $s$
$P(\Delta(-\frac{1}{2s},\frac{1}{2s}))\subset \Delta(-\frac{1}{2s+1},\frac{1}{2s+1})$
This proves the uniform convergence.
It remains to prove that $0$ is in the Julia set. If not there is a disc
$\Delta(0,r)$ contained in the Fatou set. But then there is a convergent subsequence $P^{n_k}$
on $\Delta(0,r).$ Then the limit holomorphic function must be identically $0$.
On the other hand it must have derivative $1$ at the origin. This is impossible.

\end{proof}

The theorem generalizes to other cases when the derivative is a root of unity.
First of all we can make a linear change so that $P(z)=z+az^2+\cdots$ becomes on the
form $z+z^2+\cdots.$ If $P(z)=z+z^{k+1}+\cdots$ for $k\geq 1$ the proof can be modified by using a singular change of coordinates.
Set $w=z^k$ on some sector of angle $\frac{2\pi}{k}.$
Then the conjugate map $Q(w)=\left( w^{1/k}+w^{(k+1)/k}+\cdots   \right)^k
=w(1+w+\cdots)^k=w+kw^2+\cdots.$ Then the previous argument will still work to
find an open set where the iterates converge to $0.$

The only missing case is when $\lambda^n=1$ for some integer $n>1.$ In this case
we replace $P$ by $P^n$. This does not change the Fatou set. And $(P^n)'(0)=1$ so the previous results apply, depending on what is the lowest order term
in $P^n=z+az^k+\cdots.$\\

\bigskip

Next we will prove a theorem by Denjoy-Wolff:

\begin{theorem}
Let $f$ be an analytic function, $f(D)\subset D$, with $D$ the unit disc.
Then either $f$ fixes a point $p$ in the disc and is a biholomorphism of the disc, or there is a point $\alpha$ in $\overline{D}$ so that $f^n(z)$ converges uniformly on compact subsets to $\alpha.$
\end{theorem}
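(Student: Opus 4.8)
The plan is to split into the two cases according to whether or not $f$ has a fixed point in the open disc $D$, and to give the main construction in the fixed-point-free case using the classical Wolff argument via the Schwarz--Pick lemma applied to shrinking horodisc-type regions (or, equivalently, to the fixed points of $f$ restricted to slightly smaller discs).

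\textbf{Case 1: $f$ has a fixed point $p\in D$.} First I would reduce to $p=0$ by post- and pre-composing with a Möbius automorphism of $D$ (this does not affect the conclusion, since automorphisms are biholomorphisms of $D$). With $f(0)=0$, the Schwarz lemma gives $|f'(0)|\le 1$. If $|f'(0)|=1$ then Schwarz forces $f$ to be a rotation, in particular a biholomorphism of $D$ fixing $p$; this is the first alternative in the statement. If $|f'(0)|<1$, then $0$ is an attracting fixed point: choosing $\rho$ with $|f'(0)|<\rho<1$ one has $|f(z)|\le\rho|z|$ on a neighbourhood of $0$, hence $|f^n(z)|\le\rho^n|z|\to 0$ there, and by the Schwarz lemma $|f(z)|\le|z|$ on all of $D$ so this attraction is global; a normal-families argument (Montel: $f(D)\subset D$) upgrades pointwise convergence to uniform convergence on compact subsets, so $f^n\to 0=p$ uniformly on compacta, which is the second alternative with $\alpha=p\in\overline D$. (One should also note that if $f$ fixes a point but is \emph{not} a biholomorphism, then necessarily $|f'(p)|<1$: a non-automorphism fixing $p$ cannot have $|f'(p)|=1$ by the equality case of Schwarz, and cannot have the fixed point non-attracting; so the two alternatives are exhaustive.)

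\textbf{Case 2: $f$ has no fixed point in $D$.} This is the heart of the matter, and the main obstacle is producing the boundary point $\alpha$ and showing the iterates actually converge to it. The plan is: for each $r$ with $0<r<1$ set $f_r(z)=r f(z)$, so $f_r(\overline D)\subset r\overline D\Subset D$; by the Brouwer/Schwarz fixed point principle (or just compactness and the Schwarz--Pick contraction of the Poincaré metric on a slightly larger disc) $f_r$ has a unique fixed point $p_r\in D$, and $|p_r|\to 1$ as $r\to 1$ because any subsequential limit $p$ with $|p|<1$ would satisfy $f(p)=p$, contradicting Case 2. Extract a subsequence $r_k\to 1$ with $p_{r_k}\to\alpha\in\partial D$. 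The key estimate is the Schwarz--Pick inequality: writing $\varphi_a(z)=\frac{a-z}{1-\bar a z}$, for each $r$ and each $z\in D$ one has
\[
\left|\frac{p_r - f_r(z)}{1-\bar p_r f_r(z)}\right| \le \left|\frac{p_r - z}{1-\bar p_r z}\right|,
\]
i.e. $|\varphi_{p_r}(f_r(z))|\le|\varphi_{p_r}(z)|$, because $f_r$ fixes $p_r$. Letting $r\to1$ along $r_k$ gives, for the horodisc $H(\alpha,R)=\{z:|\varphi_\alpha(z)|<R\}$ (equivalently $\frac{|\alpha-z|^2}{1-|z|^2}<\frac{R}{1-R^2}\cdot(\text{const})$) that $f(H(\alpha,R))\subset H(\alpha,R)$ for every $R$: this is the Wolff lemma, that every horodisc at $\alpha$ is $f$-invariant.

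Finally I would deduce uniform convergence to $\alpha$. Fix a compact $K\Subset D$; it lies in some horodisc $H(\alpha,R_0)$. By Montel's theorem $\{f^n\}$ is normal, so it suffices to show every locally-uniform subsequential limit $g$ is the constant $\alpha$. Such a $g$ maps $D$ into $\overline D$; since each $f^n$ maps $H(\alpha,R)$ into itself, $g$ maps each $H(\alpha,R)$ into its closure, and $\bigcap_R \overline{H(\alpha,R)}=\{\alpha\}$, so $g\equiv\alpha$ on $\bigcup_R H(\alpha,R)=D$. (To rule out the other possibility that $g$ is a non-constant holomorphic self-map of $D$ with a fixed point, invoke Case 1 and the no-fixed-point hypothesis, or simply observe $g$ would have to commute with $f$ in a way forcing a fixed point of $f$; the cleanest route is the horodisc-intersection argument just given, which directly yields $g\equiv\alpha$.) Hence $f^n\to\alpha$ uniformly on compact subsets, completing Case 2 and the proof. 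The step I expect to be the main obstacle is the rigorous passage from the fixed points $p_r$ of the auxiliary maps to the invariant horodiscs at $\alpha$ and the verification that $\bigcap_R \overline{H(\alpha,R)}=\{\alpha\}$ cleanly forces constancy of all subsequential limits.
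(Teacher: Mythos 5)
Your Case 2 strategy (the auxiliary maps $f_r=rf$, their fixed points $p_r\to\alpha\in\partial D$, and Wolff-type invariant horodiscs) is in fact close in spirit to the paper's own argument, which uses $f_\epsilon=(1-\epsilon)f$, the fixed points $z_\epsilon$, the invariant discs $D_\epsilon$, and a limit disc $D'$ touching $\partial D$ at $\alpha$. Case 1 is fine in outline (the claim that attraction to the fixed point is global should be justified, e.g.\ by applying the maximum principle to $f(z)/z$ on $|z|\le r$ to get $|f(z)|\le c(r)|z|$ with $c(r)<1$, but that is routine). The genuine problem is the final deduction in Case 2.

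From ``each $f^n$ maps $H(\alpha,R)$ into itself'' you conclude that any locally uniform subsequential limit $g$ maps each $H(\alpha,R)$ into $\overline{H(\alpha,R)}$, and then, since $\bigcap_R\overline{H(\alpha,R)}=\{\alpha\}$, that $g\equiv\alpha$. This is a non sequitur: for a fixed $z$ you only obtain $g(z)\in\overline{H(\alpha,R)}$ for those $R$ with $z\in H(\alpha,R)$, i.e.\ $g(z)\in\overline{H(\alpha,R_0(z))}$ where $H(\alpha,R_0(z))$ is the smallest horodisc containing $z$; you do not get $g(z)\in\bigcap_R\overline{H(\alpha,R)}$. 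The identity map satisfies every invariance property you invoke and is not constant, so horodisc invariance alone cannot force all limits to be the constant $\alpha$, and the parenthetical alternatives (invoking Case 1 for $g$, or a commutation argument) do not close this. The missing ingredient is exactly what the paper establishes first: orbits cannot accumulate in the interior. The paper's lemma shows, via the \emph{strict} Schwarz--Pick contraction of the Poincar\'e metric (valid because $f$ is not an automorphism), that either $f$ has an interior fixed point or $|f^n(z)|\to1$ for every $z$; fixed-point-free automorphisms are handled separately through the explicit M\"obius form $z\mapsto Az+B$ on the half-plane. Once $|f^n(z)|\to1$ is known, invariance of a single horodisc (or of the paper's limit disc $\overline{D'}$), whose closure meets $\partial D$ only at $\alpha$, pins the pointwise limit to $\alpha$, and normality upgrades this to uniform convergence on compacta. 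You need to add such an argument (or rule out non-constant and interior-constant subsequential limits via the strict contraction, treating fixed-point-free automorphisms separately). A second, smaller point, which you flagged yourself: letting $r\to1$ directly in $|\varphi_{p_r}(f_r(z))|\le|\varphi_{p_r}(z)|$ is vacuous, since $|\varphi_\alpha|\equiv1$ on $D$ when $|\alpha|=1$; one must first rewrite the inequality using $1-|\varphi_a(w)|^2=\frac{(1-|a|^2)(1-|w|^2)}{|1-\bar a w|^2}$ and cancel the factors $1-|p_r|^2$ before passing to the limit --- standard and fixable, unlike the gap above.
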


We will give a proof of this theorem by Beardon.
The biholomorphic maps of the unit disc are of the form
$R(z)= e^{i\theta}\frac{z-z_0}{1-z\overline{z}_0}.$ 

There are two case, first when $f$ is a biholomorphic map with no ficed point in the interior, second when $f$ is not biholomorphic.
We discuss first the case when $f$ is a {\bf biholomorphic map with no fixed point in the interior of the unit disc}. Note that we can also think of $f$ as a biholomorphic map on the upper halfplane. This extends by reflection to the lower halfplane. It will be of the form $\frac{az+b}{cz+d}$ with real coefficients. When we solve for fixed points, we get conjugate answers. Since there are no solutions in the upper halfplane, they must be on the real line (including possibly infinity).
We can assume it is the point at infinity. Then the map is of the form
$f(z)=Az+B$ for real numbers $A,B, A>0.$ If $A>1$, $\infty$ is an attracting fixed point. If $A<1,$ the fixed point given by solving $Az+B=z$ is an attracting fixed point.
If $A=1,$ $B$ must be nonzero. Hence $f^n(z)=z+nB$ so all points converge to $\infty.$\\

We discuss the second and last case, $f$ {\bf is not a biholomorphism.}\\


We will need the Poincare metric. For $z\in D$, and $\xi$ a tangent vector, we define the
length of $\xi$ as $ds(\xi)=\frac{|\xi|}{1-|z|^2}.$ If $w=e^{i\theta} \frac{z-a}{1-\overline{a}z}$
is an automorphism of the disc, then this is an isometry in this metric.
This follows after calculating that $|dw/dz|=\frac{1-|w|^2}{1-|z|^2}.$
We can define the distance $\rho(z,w)$ by integrating over all curves from $z$ to $w$ and minimizing.

\begin{lemma}
If $f:D\rightarrow D$ is holomorphic, but not an automorphism, then
$\rho(f(z),f(w))< \rho(z,w)$ for any $z \neq w.$
\end{lemma}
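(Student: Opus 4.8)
The plan is to use the Schwarz--Pick lemma in its infinitesimal form together with the fact that strict contraction of the metric density at a single point propagates along curves. First I would recall that the Poincar\'e density is $\rho^*(z) = \frac{1}{1-|z|^2}$, and that the Schwarz--Pick lemma (applied to $f$ composed with disc automorphisms sending $z \mapsto 0$ and $f(z) \mapsto 0$) gives, for every $z \in D$,
\[
\frac{|f'(z)|}{1-|f(z)|^2} \le \frac{1}{1-|z|^2},
\]
with equality at some point if and only if $f$ is an automorphism. Since $f$ is assumed not to be an automorphism, the equality case of the classical Schwarz lemma forces \emph{strict} inequality at every point: if it were an equality at even one point, the relevant self-map of the disc fixing $0$ would be a rotation, making $f$ an automorphism. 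So $\frac{|f'(z)|}{1-|f(z)|^2} < \frac{1}{1-|z|^2}$ for all $z \in D$.

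Next I would pass from this infinitesimal statement to the integrated one. Let $z \neq w$ and let $\gamma$ be the geodesic from $z$ to $w$, so that $\rho(z,w) = \int_\gamma \frac{|d\zeta|}{1-|\zeta|^2}$. Then $f \circ \gamma$ is a (not necessarily geodesic) curve from $f(z)$ to $f(w)$, and
\[
\rho(f(z),f(w)) \le \int_{f\circ\gamma} \frac{|d\zeta|}{1-|\zeta|^2} = \int_\gamma \frac{|f'(\zeta)|\,|d\zeta|}{1-|f(\zeta)|^2} < \int_\gamma \frac{|d\zeta|}{1-|\zeta|^2} = \rho(z,w),
\]
where the strict inequality uses that the integrand is strictly smaller pointwise along $\gamma$ and $\gamma$ has positive length. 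This gives exactly $\rho(f(z),f(w)) < \rho(z,w)$.

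The main obstacle is justifying the strictness cleanly: one must be careful that $f$ being non-automorphic is really what rules out equality, and that it does so at \emph{every} point, not just generically. The cleanest route is to invoke the equality case of the Schwarz lemma directly: fix $z_0$, let $S$ and $T$ be automorphisms with $S(z_0)=0$, $T(f(z_0))=0$, and set $g = T\circ f\circ S^{-1}$, a holomorphic self-map of $D$ with $g(0)=0$; then $|g'(0)| \le 1$ with equality iff $g$ is a rotation iff $f = T^{-1}\circ g\circ S$ is an automorphism. Unwinding the chain rule and the automorphism-invariance of the density $\frac{1}{1-|z|^2}$ converts $|g'(0)| < 1$ into the displayed strict infinitesimal inequality at $z_0$. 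Once that is in hand the integration step is routine. I would also remark that this lemma immediately yields that $f$ can have at most one fixed point in $D$, which is the first place it gets used in the Denjoy--Wolff argument.
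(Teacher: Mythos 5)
Your proof is correct, but it takes a different route from the paper's. The paper never differentiates: it conjugates by the automorphisms $T$ (sending $0$ to $z$) and $S$ (sending $f(z)$ to $0$), notes that $U=S\circ f\circ T$ is a holomorphic self-map of the disc fixing $0$ and not an automorphism, applies the classical Schwarz lemma to \emph{values}, $|U(\tau)|<|\tau|$ for $\tau=T^{-1}(w)\neq 0$, and then converts this to $\rho(f(z),f(w))<\rho(z,w)$ using only two facts: automorphisms are $\rho$-isometries (stated in the text) and $\rho(0,\zeta)$ is strictly increasing in $|\zeta|$. You instead establish the \emph{infinitesimal} Schwarz--Pick inequality, strict at every point because equality at a single point would force the conjugated map to be a rotation and hence $f$ an automorphism (this equality-case mechanism is exactly the one the paper exploits, applied to $g'(0)$ rather than to $|U(\tau)|$), and then integrate along a length-minimizing geodesic. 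Your approach buys a stronger conclusion -- strict pointwise contraction of the Poincar\'e density everywhere, which is often useful later -- but it costs two additional standard inputs that the paper's argument avoids: the existence of a minimizing geodesic (or at least the explicit formula for $\rho$ along radii after normalizing one endpoint to $0$), and the propagation of a strict pointwise inequality through the length integral; you do justify the latter correctly, since the ratio of densities is continuous and $<1$ on the compact curve, which has positive length. Both arguments are complete; the paper's is slightly more economical given what it has already set up about the metric, while yours generalizes more readily to statements about derivatives and local contraction.
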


\begin{proof}
Fix $z\in D.$ Let $T(\zeta)=\frac{\zeta+z}{1+\overline{\zeta}z}$ and
$S(\eta) = \frac{\zeta-f(z)}{1-\overline{f(z)}\eta}.$
Then $U:=S\circ f\circ T$ is a holomorphic map from the disc to itself. Since $T(0)=z$ and $S(f(z))=0$ it follows that $U(0)=0$. The map $U$ is not an automorphism, so
if $\tau \neq 0$, then $|U(\tau)|<|\tau|.$ Choose $\tau= T^{-1}(w).$ It follows then
that $|U( T^{-1}(w)) |<|T^{-1}(w)|.$ But then $\rho(0,U( T^{-1}(w))<\rho(0, T^{-1}(w)).$
Then $\rho(S\circ f \circ T(0), S\circ f\circ T (T^{-1}(w))<\rho(T(0),T(T^{-1}(w)).$
So $\rho(f(T(0),f(w))<\rho(z, w)$, the result follows.
\end{proof}

\newpage



\begin{lemma}
Suppose that $f:D  \rightarrow D$ is not an automorphism. Then either there is a fixed point $p\in D$ and all orbits in $D$ converges to $p$ or for every $z\in D, |f^n(z)| \rightarrow 1.$
\end{lemma}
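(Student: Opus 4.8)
The plan is to use the Schwarz–Pick contraction from the previous lemma together with a compactness argument, splitting on whether the orbit of some (equivalently every) point stays in a compact subset of $D$ or escapes to the boundary. First I would fix a basepoint $z_0 \in D$ and consider the orbit $z_n = f^n(z_0)$. Either $\{z_n\}$ has a subsequence converging to a point $p \in D$, or $|z_n| \to 1$. In the first case I claim $p$ is a fixed point and all orbits converge to it; in the second case I claim $|f^n(z)| \to 1$ for every $z$.

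For the first case: suppose $z_{n_k} \to p \in D$. Since $f$ is not an automorphism, $\rho(f(z),f(w)) < \rho(z,w)$ for $z \neq w$, so the sequence $\rho(z_n, z_{n+1}) = \rho(f^n(z_0), f^n(z_1))$ is nonincreasing in $n$, hence converges to some $\ell \geq 0$. Along the subsequence, $z_{n_k} \to p$ forces $z_{n_k + 1} = f(z_{n_k}) \to f(p)$ by continuity, so $\ell = \rho(p, f(p))$. But also $\rho(z_{n_k+1}, z_{n_k+2}) \to \rho(f(p), f(f(p)))$, and this limit equals $\ell$ too; if $p \neq f(p)$ the strict contraction gives $\rho(f(p), f(f(p))) < \rho(p,f(p)) = \ell$, a contradiction. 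Hence $f(p) = p$. Now for any $w \in D$, the sequence $\rho(f^n(w), p) = \rho(f^n(w), f^n(p))$ is nonincreasing, hence converges to some $L(w) \geq 0$; passing to a convergent subsequence $f^{n_j}(w) \to q$ with $\rho(q,p) = L(w)$ and applying the strict contraction once more ($\rho(f(q), p) = \rho(f(q), f(p)) < \rho(q,p)$ unless $q = p$) forces $q = p$, so $L(w) = 0$ and $f^n(w) \to p$. A normal families / Dini-type argument upgrades this to uniform convergence on compact subsets, but for the statement as written pointwise (and locally uniform) convergence of orbits to $p$ suffices.

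For the second case: if no orbit subsequence converges into $D$, then for the basepoint $z_0$ we have $|z_n| \to 1$. For an arbitrary $w \in D$, the Poincaré distance satisfies $\rho(f^n(w), f^n(z_0)) \leq \rho(w, z_0) =: M < \infty$ for all $n$, i.e. $f^n(w)$ stays within a fixed hyperbolic ball of radius $M$ around $f^n(z_0)$. Since the Euclidean diameter of a hyperbolic ball of fixed radius centered at a point approaching $\partial D$ tends to $0$, we get $|f^n(w)| \to 1$ as well. The only genuine subtlety is verifying this dichotomy is exhaustive — that is, that the two cases (``some orbit accumulates in $D$'' vs. ``every orbit escapes to $\partial D$'') really are the ones above — and that in the first case the accumulation point is forced to be a common fixed point; the strict contraction property of the previous lemma is exactly what makes both of these work, so I expect the main obstacle to be purely bookkeeping: phrasing the monotonicity-plus-subsequence argument cleanly and handling the passage from pointwise to locally uniform convergence via Montel.
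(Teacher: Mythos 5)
Your proof is correct, and it takes a noticeably different route from the one in the notes at the key step. The notes pick a subsequence of times at which the orbit of $q$ lies in a fixed compact set and then invoke a \emph{uniform} contraction constant $\sigma<1$ there, so that the consecutive displacements $\rho(f^{n_k+1}(q),f^{n_k+2}(q))$ decay geometrically and a subsequential limit $p$ satisfies $\rho(p,f(p))=0$; note that this uniform $\sigma$ is not literally the previous lemma (which is only a pointwise strict inequality) and itself needs a small compactness argument. You avoid uniform contraction entirely: you use only that the consecutive displacements $\rho(z_n,z_{n+1})$ are nonincreasing, pass to the limit $\ell$ along a subsequence converging to $p\in D$, and let the pointwise strict inequality at $p$ rule out $f(p)\neq p$ (an Edelstein-type argument). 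You also supply two things the notes leave implicit or do not need: a proof that \emph{all} orbits converge to $p$ (monotonicity of $\rho(f^n(w),p)$ plus the same subsequential-limit trick --- here you should say explicitly that the orbit of $w$ stays in the compact hyperbolic ball $\overline{B}_\rho(p,\rho(w,p))$, so its limit points lie in $D$), and the propagation of escape from the basepoint to every point via $\rho(f^n(w),f^n(z_0))\leq\rho(w,z_0)$ and the fact that hyperbolic balls of bounded radius centered near $\partial D$ have small Euclidean size; the notes sidestep this last step by arguing contrapositively (``if some orbit does not escape, then there is an attracting fixed point''). The trade-off: the notes' argument gives quantitative geometric decay along return times, while yours is more elementary and self-contained, using nothing beyond the strict Schwarz--Pick inequality, monotonicity, and continuity.
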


\begin{proof}
We will show that if there is a point $q\in D$ such that $|f^n(q)|$ does not converge to 1, then
there is an attracting fixed point in $D.$ 
There must exist an $r<1$ and a subsequence $f^{n_k}$ so that $|f^{n_k}(q)|< r.$
By picking $r<s<1$ large enough, we can also assume that $|f^{n_k+1}(q)|<s.$
It follows that there exists a constant $\sigma<1$ so that 
$\rho(f^{n_k+1}(q),f^{n_k+2}(q))\leq \sigma \rho(f^{n_k})(q),f^{n_k+1}(q))$.
This implies by induction that 
$$
\rho(f^{n_k+1}(q),f^{n_k+2}(q) \leq \sigma^k \rho(f^{n_1}(q),f^{n_1+1}(q)).
$$
Therefore$\rho(f^{n_k+1}(q),f^{n_k+2}(q))\rightarrow 0.$ Taking an even thinner subsequence we can also assume that $f^{n_k+1}(q)\rightarrow p$ for a point in the unit disc. But then $\rho(p,f(p))=0$, which implies that $p$ is a fixed point. Since $f$ is not an automorphism, $p$ must be an attracting fixed point and all points in $D$ converge under iteration to $p.$
\end{proof}

We can now assume that $|f^n(z)|\rightarrow 1$ for any $z$ in $D.$
Let $\epsilon>0$ be small. Set $f_{\epsilon}(z)=(1-\epsilon) f(z).$ Then the image of $f_\epsilon$
is in $\{|z|<(1-\epsilon)\}.$ The above Lemma applies to show that $f_{\epsilon}$ has an attracting fixed point $z_\epsilon$. Suppose that there is a subsequence $\epsilon_k\rightarrow 0$
so that the sequence $z_{\epsilon_k}$ does not converge to the boundary. Then we can assume they converge to a point $p$ in $D$. But then this point must be a fixed point for $f$. This is impossible  because $|p|=|f^n(p)|$ goes to the boundary.

Set $T_\epsilon=\frac{z-z_\epsilon}{1-\overline{z}_\epsilon z}$ This maps $z_\epsilon$ to $0.$
Hence the map $T_\epsilon \circ f \circ T_\epsilon^{-1}$ conjugates $f$ with a map which has an attracting fixed point at $0.$ If we let $D_\epsilon=\{|T_\epsilon|<|z_\epsilon|\},$
then we see that $f_{\epsilon}(D_\epsilon)\subset D_\epsilon.$ Here $D_\epsilon$ is a disc with
$0$ as a boundary point and containing a point $z_\epsilon$ converging to the boundary.

Next, let $D'$ be a limit of a subsequence $D_{\epsilon_k}.$ Then $D'$ is a disc with $0$ as a boundary point. Moreover, there is another boundary point $\alpha$ which is on the unit circle.
If $q$ is an interior point of $D'$, then $q\in D_{\epsilon_k}$ for all large $k$. Hence
$(1-\epsilon_k)f(q)$ in $D_{\epsilon_k}$. But then $f(q)\in \overline{D}'.$ 
It follows that $f(\overline{D'}) \subset \overline{D}',$ so $f^n(\overline{D}')\subset
\overline{D}'.$ Therefore $\{f^n\}$ converges uniformly to $\alpha$ on compact subsets of
$D'.$ Since $\{f^n\}$ is a normal family on the whole unit disc, it follows that $\{f^n\}$ converges uniformly to $\alpha$ on compact subsets of $D.$

This finishes the proof of the Denjoy-Wolff theorem.\\

Our next topic is the {\bf Snail Lemma.} 

\begin{theorem}
Suppose that $P(z)$ is a polynomial of degree $d\geq 2.$ Let $P(0)=0$ and $P'(0)=e^{2\pi i \theta}$
where $\theta$ is irrational. Then there is no Fatou component $U$ so that $F(U)=U$ and
$f^n(z) \rightarrow 0$ uniformly on compact subsets.
\end{theorem}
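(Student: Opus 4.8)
The plan is to run the classical \emph{Snail Lemma} argument: assume for contradiction that such a $U$ exists, build the ``orbit path'' joining a point of $U$ to the fixed point $0$, show that the local rotation $P'(0)=e^{2\pi i\theta}$ forces this path to spiral infinitely many times around $0$, and then show that such a spiral is incompatible with $U$ being a hyperbolic domain on which $P$ acts as a Schwarz--Pick contraction.

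First come the easy reductions. Since $P^n\to 0$ locally uniformly on $U$ while $(P^n)'(0)=\bigl(P'(0)\bigr)^n=e^{2\pi i n\theta}$ has modulus $1$, the point $0$ cannot lie in $U$ (otherwise Cauchy's estimates on a small disc about $0$ inside $U$ would give $(P^n)'(0)\to 0$). Hence $0\in\partial U$, and since the boundary of a Fatou component lies in $J$, we get $0\in J$. Similarly $P$ has no fixed point in $U$, since a fixed point is its own orbit and cannot converge to $0\notin U$. Finally $U$ omits the infinite set $J$, so $U$ is hyperbolic, carries a Poincar\'e metric $\rho_U$, and $P\colon U\to U$ satisfies $\rho_U\bigl(P^n(a),P^n(b)\bigr)\le\rho_U(a,b)$.

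Next, the \textbf{snail}. Fix $p\in U$ and a path $\alpha\colon[0,1]\to U$ from $p$ to $P(p)$, and set $\Gamma(t)=P^n(\alpha(t-n))$ for $t\in[n,n+1]$; then $P(\Gamma(t))=\Gamma(t+1)$, and since $P^n\to 0$ uniformly on $\alpha([0,1])$ we have $\Gamma(t)\to 0$, so $\Gamma$ extends continuously to $[0,\infty]$ with $\Gamma(\infty)=0$. Near $0$ write $P(z)=e^{2\pi i\theta}z\,(1+O(z))$. Choosing, for large $t$, a continuous determination $\varphi(t)$ of $\arg\Gamma(t)$, the identity $\Gamma(t+1)/\Gamma(t)=e^{2\pi i\theta}\bigl(1+O(\Gamma(t))\bigr)$ gives $\varphi(t+1)-\varphi(t)\equiv 2\pi\theta+O(\Gamma(t))\pmod{2\pi}$; the left side is continuous and its reduction mod $2\pi$ tends to $2\pi\theta$, hence it converges to $2\pi(\theta+m)$ for a fixed integer $m$, and since $\theta$ is irrational $\theta+m\ne 0$. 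Therefore $\varphi(t)\to\pm\infty$: the orbit path $\Gamma$ winds around $0$ infinitely often.

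It remains to convert the infinite spiralling into a contradiction, and this is the heart of the matter. Note that the spiral alone contradicts nothing topological — a simply connected domain omitting $0$ (a thin ``fattened logarithmic spiral'') can contain a path winding infinitely around $0$ and converging to $0$ — so the dynamics must be used quantitatively. The tool is Schwarz--Pick: each loop $\Gamma|_{[n,n+1]}=P^n(\Gamma|_{[0,1]})$ has $\rho_U$--length at most $L:=\operatorname{length}_{\rho_U}(\Gamma|_{[0,1]})$, so $\operatorname{length}_{\rho_U}(\Gamma|_{[0,n]})\le nL$, whereas the infinite spiralling, together with a suitable lower bound for the Poincar\'e density of $U$ near $0$ (using $\partial U\subseteq J$, the complete invariance of $J$, and the fact that near $0$ the map $P$ is close to rotation by $2\pi\theta$, so $J$ surrounds $0$ at all small scales), should force $\operatorname{length}_{\rho_U}(\Gamma|_{[0,n]})$ to grow strictly faster than $n$. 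Pinning down this lower bound on $\rho_U$ near the fixed point is the main technical obstacle I expect. A variant that sidesteps it passes to the universal cover $\pi\colon\Delta\to U$, lifts $P$ to a fixed-point-free holomorphic self-map $F$ of $\Delta$, and applies the Denjoy--Wolff theorem proved above to get $F^n\to\alpha\in\partial\Delta$, so that the lifted orbit path limps to a single prime end $\alpha$; combining this with the infinite winding of its projection $\Gamma$ around $0\notin U$, and ruling out the degenerate case (in which $\alpha$ is a fixed point of the relevant deck transformation) by the Julia--Wolff--Carath\'eodory lemma — which would force $P'(0)$ to be real and positive — again contradicts $P'(0)=e^{2\pi i\theta}$ with $\theta$ irrational.
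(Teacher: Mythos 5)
Your setup is fine (the reductions, the orbit path $\Gamma$, and the computation showing $\arg\Gamma$ increases by roughly $2\pi\theta$ per step so the path winds infinitely often around $0$), but the proof stops exactly where the real work begins. The contradiction is never established. Your first mechanism --- Schwarz--Pick gives hyperbolic length of $\Gamma|_{[0,n]}$ at most $nL$, while infinite winding ``should'' force superlinear growth --- is not only left unproved (you say yourself that the lower bound on the Poincar\'e density of $U$ near $0$ is the main obstacle), it is genuinely doubtful as stated: in the punctured disc the hyperbolic length of a loop around $0$ at radius $r$ is about $2\pi/\log(1/r)$, which tends to $0$, so a path can wind infinitely often around a puncture with \emph{bounded} hyperbolic length. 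To get a contradiction you would have to show that $U$ is strictly thinner than a punctured disc at all small scales (i.e.\ that $J$ surrounds $0$ at comparable scales in a quantitative way), and that is precisely the kind of statement the theorem is trying to establish; nothing in your sketch supplies it. Your alternative route (lift to the universal cover, Denjoy--Wolff, then Julia--Wolff--Carath\'eodory to force $P'(0)\in(0,1]$) is a legitimate known strategy, but as written it is only a gesture: the key claim that the multiplier at $0$ is controlled by the angular derivative at the Denjoy--Wolff point of the lift is exactly the nontrivial step, and the Julia--Wolff--Carath\'eodory lemma is not proved in this course (only Denjoy--Wolff is), so invoking it without proof leaves the same size of gap.

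For comparison, the paper's proof avoids the spiral-length issue entirely and is quantitative from the start: with $V_0\Subset U$ a disc containing $z_0$ and $z_1$, it normalizes $\phi_n=P^n/P^n(z_0)$, uses the Koebe $1/4$ theorem to show $\{\phi_n\}$ is normal and that every limit is univalent with derivative bounded below at $z_0$ (the lower bound coming from the fact that $P^n(V_0)$ and $P^{n+1}(V_0)$ both contain $z_{n+1}$, so $\phi_n(V_0)$ cannot collapse into a thin sector), hence $P^n(V_0)$ contains a disc of radius $\sigma|z_n|$ about $z_n$. Then the equidistribution of the irrational rotation ($e^{2\pi i k\theta}$, $0\le k\le N$, is $\sigma/4$-dense on the circle) shows that finitely many consecutive images $P^n(V_0),\dots,P^{n+N}(V_0)$ cover a full annulus, so $\bigcup_n P^n(V_0)\subset U$ covers a punctured disc $\{0<|z|<\eta\}$; finally, since $P^n\to 0$ there, $P^n$ maps a small circle about $0$ well inside itself, giving $|(P^n)'(0)|\le 1/2$ by a Cauchy estimate, contradicting $|(P^n)'(0)|=|\lambda|^n=1$. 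That covering-plus-derivative argument is the missing heart of your proposal.
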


We will use the Koebe $1/4$ theorem:

\begin{theorem}
Let $f$ be an analytic function on the unit disc. Suppose that $f(0)=0, f'(0)=1$ and $f$ is univalent, i.e. 1-1. Then $f(\{|z|<1\})\supset \{|z|< 1/4\}.$ 
\end{theorem}

We prove first  the snail lemma, and get back to talk about the proof of the Koebe 1/4 theorem in the end. So let $U$ be an invariant Fatou component so that $P^n$ converges on compact subsets to the fixed point $0.$ 

Pick a point $z_0\in U$ with orbit $\{z_n\}.$ Let $V_0\subset \subset U$ be a connected open set containing $ z_0$ and $z_1.$ We define $V_n=P^n(V_0)$. Then the union $\cup V_n$ is called a snail. It contains all the points of the orbit and it converges to $0$.

Since $P'(0)\neq 0$, the functions $P$ and $P^n$ are univalent in a neighborhood of $0$. Hence if we start later in the sequence, we can assume that all the $V_j$ are in this neighborhood and we can also shrink $V_0$ to be a topological disc. Define
$$
\phi_n(z)= \frac{P^n(z)}{P^n(z_0)}.
$$

Then these functions are univalent on $V_0$, $\phi_n(z_0)=1$ and the image of $V_0$ does not contain the origin. We let $\psi$ denote a biholomorphic map from the unit disc to $V_0$ such that
$\psi(0)=z_0.$ Define $h_n(\zeta) = \phi_n(\psi(\zeta))-1.$ Then $h_n$ is univalent on the unit disc, $h_n(0)=0$ and $h_n \neq -1$. The function $\frac{h_n(\zeta)}{h_n'(0)}$  satisfies the condition that
the derivative at $0$ is one and this function omits the value $\frac{-1}{h_n'(0)}. $ Therefore the Koebe $1/4$ theorem implies that $|\frac{-1}{h_n'(0)}|\geq \frac{1}{4}. $ Hence $|h_n'(z_0)|\leq 4.$ 
Note that the family of univalent functions $f$ on the unit disc with $f(0)=0$ and $f'(0)=1$ is normal. This is because their image must omit a point on the unit circle. Then if you restrict to any
subdisc of the unit disc which does not include the origin, you get a family of maps avoiding $0$ and some point on the unit circle. This implies normality on the punctured disc. The maximum principle extends normality to the disc.\\

Hence the family $\frac{h_n(\zeta)}{h_n'(0)}$ normal. Since $|h_n'(0)|\leq 4$ 
it follows that also the $h_n(\zeta)=\frac{h_n(\zeta)}{h_n'(0)}h_n'(0)$ is normal. Therefore also the family $\phi_n$ is normal on $V_0.$ 

Next we show that all limit functions $g$ on $V$ are univalent with $|g'(z_0)|\geq \delta > 0$ for a fixed delta. If not, there is a sequence where the derivative goes to $0$. Then the derivative must go to zero on all of $V_0$. So for some large $n$, we have the $\phi_n$ map $V_0$ into
a small sector around $1$ (after shrinking $V_0$ a little). The same it true for $P^n$. Since $P(z)=e^{2\pi i \theta}z+ 
\mathcal O(z^2),$ we see then the image $P^{n+1}(V_0)$ is disjoint from $P^n(V_0).$ This is impossible since both sets contain the point $z_{n+1}.$ It follows from the Koebe $1/4$ theorem that the sets $P^n(V_0)$ contain a disc of radius $\sigma |z_n|$ centered at $z_n$.

The following is a well known fact about irrational numbers.

\begin{lemma}
Let $\theta$ be an irrational number and let $\sigma>0.$ Then there exists an integer $N>1$
so that every $e^{2\pi i \psi}$ is closer to some $e^{2\pi i k \theta}, 0\leq k \leq N$ than $\sigma/4.$
\end{lemma}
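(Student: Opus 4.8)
\emph{Proof strategy.} The plan is to extract from the irrationality of $\theta$ an arbitrarily small \emph{nonzero} rotation lying inside the orbit $\{e^{2\pi ik\theta}\}_{k\ge0}$, and then to iterate that small rotation until it has swept once around the circle: its successive iterates will then be the required net. Since shrinking $\sigma$ only strengthens the conclusion, I may assume $\sigma<1$.

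First I would fix an integer $M>4\pi^{2}/\sigma$ (so $M\ge 40$) and cut the unit circle into $M$ closed arcs of length $2\pi/M$. Among the $M+1$ points $e^{2\pi ik\theta}$, $k=0,1,\dots,M$, two lie on a common arc, say with indices $k_{1}<k_{2}$; put $m=k_{2}-k_{1}\in\{1,\dots,M\}$. Multiplication by $e^{-2\pi ik_{1}\theta}$ is a rotation, hence an isometry, so $|e^{2\pi im\theta}-1|=|e^{2\pi ik_{2}\theta}-e^{2\pi ik_{1}\theta}|$ is no larger than the arc those two points subtend, i.e. at most $2\pi/M$. Since $\theta$ is irrational and $m\ge1$, $e^{2\pi im\theta}\ne 1$; write $e^{2\pi im\theta}=e^{i\gamma}$ with $\gamma\in(-\pi,\pi]\setminus\{0\}$, so that $|\gamma|=2\arcsin\!\bigl(\tfrac12|e^{2\pi im\theta}-1|\bigr)$. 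The elementary bound $\arcsin x\le\tfrac{\pi}{2}x$ on $[0,1]$ then gives $|\gamma|\le\tfrac{\pi}{2}|e^{2\pi im\theta}-1|\le \pi^{2}/M<\sigma/4$; in particular $|\gamma|<1$.

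Now set $J=\lceil 2\pi/|\gamma|\rceil$ and $N=Jm$; since $|\gamma|<1$ we have $J\ge 7$, hence $N>1$. For $j=0,1,\dots,J$ the points $w_{j}:=e^{2\pi i(jm)\theta}=e^{ij\gamma}$ all lie in $\{e^{2\pi ik\theta}:0\le k\le N\}$, and $w_{j+1}$ is obtained from $w_{j}$ by the fixed rotation through $\gamma$. Because $(J-1)|\gamma|<2\pi\le J|\gamma|$, the angles $0,\gamma,\dots,J\gamma$ reduced mod $2\pi$ wrap once around the circle, leaving no cyclic gap longer than $|\gamma|$; hence every $e^{2\pi i\psi}$ on the unit circle lies within arc-distance $|\gamma|$, and a fortiori within chordal distance $|\gamma|<\sigma/4$, of some $w_{j}$, that is, of some $e^{2\pi ik\theta}$ with $0\le k\le N$. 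This is exactly the claim. (Equivalently and more softly: the same computation shows $\{e^{2\pi ik\theta}\}_{k\ge0}$ is dense in the unit circle, and a finite subcover of its cover by balls of radius $\sigma/4$ produces $N$.) There is no genuine obstacle here; the only thing to watch is the bookkeeping that turns the chordal estimate $|e^{2\pi im\theta}-1|\le 2\pi/M$ into the bound $|\gamma|<\sigma/4$ on the rotation angle, and the verification that the wrapped iterates $w_{0},\dots,w_{J}$ leave no gap longer than $|\gamma|$.
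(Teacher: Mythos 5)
Your argument is correct and complete: the pigeonhole step produces a nonzero rotation $e^{2\pi i m\theta}=e^{i\gamma}$ with $|\gamma|<\sigma/4$, and iterating it $J=\lceil 2\pi/|\gamma|\rceil$ times does leave no gap on the circle longer than $|\gamma|$, so $N=Jm$ works; the small estimates ($\arcsin x\le\tfrac{\pi}{2}x$, $(J-1)|\gamma|<2\pi\le J|\gamma|$) are all valid. The paper itself offers no proof at all --- it simply labels the lemma ``a well known fact about irrational numbers'' --- so your write-up supplies exactly the standard density-of-irrational-rotation argument that the paper leaves implicit, and there is nothing in the text for it to conflict with.
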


It follows that for large $n,$ the points $z_n,z_{n+1}$ is inside an annulus
covered by $f^n(V_0)\cup f^{n+1}(V_0)\cup \cdots \cup f^{n+N}(V_0).$
Hence the set $\cup_n f^n(V_0)$ covers a punctured disc $0<|z|<\eta.$
Let $C$ denote the circle of radius $\eta>2.$ It follows that for $n$ large enough $f^n(C)$
is contained in the set $0<|z|< \eta/4.$

This implies that the derivative of $f^n$ at $0$ is at most $1/2.$ This contradicts
that $|f'(0)|=1.$ Hence the snail lemma follows.\\

Now we discuss the proof of the Koebe 1/4 theorem

\begin{theorem}
Let $f$ be an analytic function on the unit disc. Suppose that $f(0)=0, f'(0)=1$ and $f$ is univalent, i.e. 1-1. Then $f(\{|z|<1\})\supset \{|z|< 1/4\}.$ 
\end{theorem}

We need:

\begin{theorem} (Area theorem)
Let $g(z)=\frac{1}{z}+\sum_{n=0}^\infty b_n z^n$. Suppose $g$ is univalent as a map into the Riemann sphere. Then $\sum_{n=0}^\infty n|b_n|^2\leq 1.$
\end{theorem}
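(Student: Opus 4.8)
The plan is to prove the Area Theorem by computing, for each radius $0<r<1$, the area enclosed by the image under $g$ of the circle $\{|z|=r\}$, and observing that this area is nonnegative. Here $g$ is univalent and meromorphic on $\{|z|<1\}$ with a single simple pole at $0$, hence holomorphic and injective on the punctured disc $\{0<|z|<1\}$. For fixed $r\in(0,1)$ the restriction of $g$ to $\{|z|=r\}$ is continuous and injective, so $\gamma_r:=g(\{|z|=r\})$ is a Jordan curve (in fact real-analytic, since $g$ is holomorphic and $g'$ cannot vanish on $\{|z|=r\}$); let $K_r$ be the bounded component of $\mathbb C\setminus\gamma_r$, so that $\mathrm{Area}(K_r)\ge 0$.

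First I would pin down the orientation. Since $g(z)=\tfrac1z+O(1)$ near $0$, the open connected set $g(\{0<|z|<r\})$ contains a punctured neighborhood of $\infty$ and has boundary contained in $\gamma_r$; being clopen in the unbounded component of $\mathbb C\setminus\gamma_r$, it therefore equals that component. Consequently, as $z$ runs counterclockwise around $\{|z|=r\}$ the (orientation preserving) map $g$ traces $\gamma_r$ with the unbounded component on its left, i.e.\ $\gamma_r$ runs clockwise about $K_r$. Green's theorem then gives
\[
\mathrm{Area}(K_r)=-\frac{1}{2i}\oint_{\gamma_r}\bar w\,dw=-\frac12\int_0^{2\pi}\overline{g(re^{i\theta})}\,g'(re^{i\theta})\,re^{i\theta}\,d\theta,
\]
which for the test case $g(z)=1/z$ correctly returns $\pi/r^2$.

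Next I would substitute the series. With $z=re^{i\theta}$ one expands $\overline{g(z)}=\tfrac1{\bar z}+\sum_{m\ge 0}\bar b_m\bar z^{m}$ and $g'(z)\,z=-\tfrac1z+\sum_{n\ge 1}n b_n z^{n}$, multiplies, and integrates over $\theta\in[0,2\pi]$; only the monomials $z^a\bar z^b$ with $a=b$ survive (the term $c\,z^a\bar z^a$ integrating to $2\pi c\,r^{2a}$), the terms involving $b_0$ drop out, and one is left with
\[
\mathrm{Area}(K_r)=\pi\Bigl(\frac1{r^2}-\sum_{n=1}^{\infty}n\,|b_n|^2\,r^{2n}\Bigr).
\]
Since the left-hand side is $\ge 0$, we obtain $\sum_{n\ge 1}n|b_n|^2 r^{2n}\le r^{-2}$ for all $r\in(0,1)$. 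Letting $r\to 1^{-}$ and invoking monotone convergence (the terms are nonnegative and increasing in $r$) gives $\sum_{n\ge 1}n|b_n|^2\le 1$, which is the claim, the $n=0$ term of $\sum_{n\ge 0}n|b_n|^2$ being zero.

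The term-by-term integration in the third paragraph is routine once it is set up. The step I expect to be the main obstacle is the second paragraph: checking that $\gamma_r$ is genuinely a Jordan curve, that $g$ maps the ``pole side'' $\{0<|z|<r\}$ exactly onto the unbounded complementary component of $\gamma_r$, and hence that the sign in Green's theorem is $-\tfrac{1}{2i}\oint$ rather than $+\tfrac{1}{2i}\oint$. Everything after the orientation is settled is bookkeeping.
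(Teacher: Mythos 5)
Your proposal is correct and follows essentially the same route as the paper: compute the area of the bounded region cut out by $g(\{|z|=r\})$ via Green's/Stokes' theorem, expand $\bar g\,dg$ in the Laurent series to get $\pi\bigl(r^{-2}-\sum_{n\ge1}n|b_n|^2r^{2n}\bigr)$, and conclude from nonnegativity of the area by letting $r\to1^-$. Your treatment is in fact slightly more careful than the paper's on the one delicate point (that $g$ sends $\{0<|z|<r\}$ onto the unbounded complementary component, fixing the sign/orientation in the boundary integral), which the paper passes over implicitly.
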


\begin{proof}
Fix $0<r<1.$ We set $D_r=\mathbb C \setminus g(\overline{\{|z|<r\}}.$ We use Stokes' theorem
to calculate the area of $D_r,|D_r|.$
\bea
|D_r| & = & \int \int_{D_r}dxdy \\
& = & \frac{1}{2i}\int \int_{D_r}  d\overline{w}\wedge dw\\
& = & \frac{1}{2i}\int \int_{D_r} d(\overline{w}dw)\\
& = & \frac{1}{2i} \int_{\partial D_r} \overline{w}dw\\
& = & \frac{-1}{2i} \int_{\partial \{|z|<r\}}\overline{g} dg\\
& = & \frac{-1}{2i}\int_{\partial \{|z|<r\} }\left(\frac{1}{\overline{z}}+\overline{b}_0
+\overline{b}_1 \overline{z}+\cdots\right)          
\left(\frac{-1}{z^2} +b_1+\cdots  \right)dz\\
\\
& = & \frac{-1}{2i}\int_{\partial \{|z|<r\} }\left(\frac{z}{r^2}+\overline{b}_0
+\overline{b}_1 \frac{r^2}{z}+\cdots\right)          
\left(\frac{-1}{z^2} +b_1+\cdots  \right)dz\\
& = & \frac{1}{2i}\int_{\partial \{|z|<r\} }\frac{1}{z}\left(\frac{1}{r^2}-|b_1|^2r^2 +\cdots           \right)dz\\
& = & \pi \left(\frac{1}{r^2}-\sum_{n=1}^\infty n|b_n|^2 r^{2n}\right)\\
\eea

Since the area is positive, it follows that
$$
\sum_{n=1}^\infty n|b_n|^2 r^{2n}\leq \frac{1}{r^2}
$$
\noindent for all $r<1.$
The result follows.
\end{proof}

\begin{corollary}
Let $f(z)=z+\sum_{n=2}a_n z^n$ be unitary on the unit disc. Then $|a_2|\leq 2.$
\end{corollary}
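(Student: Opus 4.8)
The statement to prove is the classical consequence of the Area Theorem: if $f(z)=z+\sum_{n\geq 2}a_nz^n$ is univalent on the unit disc, then $|a_2|\leq 2$. The standard trick is to build, out of $f$, a new univalent function of the type handled by the Area Theorem — namely one with a simple pole at the origin and leading coefficient $1/z$ — and then read off an inequality on its coefficients.

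\medskip

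\textbf{Step 1: Take a square root.} Since $f$ is univalent with $f(0)=0$ and $f'(0)=1$, the function $f(z)/z$ is holomorphic and nonvanishing on the disc (it omits $0$ because $f(z)=0$ only at $z=0$, which is a simple zero). Hence it has a holomorphic square root $s(z)$ with $s(0)=1$. Define
$$
F(z) := z\, s(z^2) = \sqrt{f(z^2)},
$$
interpreting the square root via the branch just chosen. Then $F$ is holomorphic on the disc, $F(0)=0$, $F'(0)=1$, and $F(z)^2 = f(z^2)$. One checks $F$ is \emph{odd}, so its expansion has the form $F(z)=z+c_3z^3+c_5z^5+\cdots$, and comparing $F(z)^2=f(z^2)$ at order $z^4$ gives $c_3 = a_2/2$.

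\medskip

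\textbf{Step 2: $F$ is univalent.} Suppose $F(z_1)=F(z_2)$. Squaring, $f(z_1^2)=f(z_2^2)$, so by univalence of $f$ we get $z_1^2=z_2^2$, i.e. $z_2=\pm z_1$. If $z_2=-z_1$ then $F(z_1)=F(-z_1)=-F(z_1)$ (oddness), forcing $F(z_1)=0$, hence $z_1=0$ since $F(z)/z=s(z^2)$ is nonvanishing; thus $z_1=z_2$. So $F$ is univalent.

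\medskip

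\textbf{Step 3: Invert to land in the Area Theorem.} Set $g(z):=1/F(1/z)$ for $|z|>1$; more carefully, $F$ being univalent near $0$ with $F'(0)=1$, the function $1/F$ has a simple pole, and
$$
g(z) = \frac{1}{F(1/z)} = z - \tfrac{a_2}{2}\,\frac{1}{z} + \cdots
$$
is univalent as a map into the Riemann sphere on $\{|z|>1\}$ (a rescaling puts it on the exterior of the unit disc, or one simply applies the Area Theorem as stated with the roles of $0$ and $\infty$ swapped — since the excerpt's Area Theorem is stated for $g(z)=1/z+\sum_{n\geq 0}b_nz^n$, I would instead directly form $h(z)=1/F(1/z)$ rescaled so it has the form $1/z+\sum b_nz^n$, finding $b_1 = -a_2/2$). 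The Area Theorem then gives $\sum_{n\geq 1}n|b_n|^2\leq 1$, and keeping only the $n=1$ term yields $|b_1|\leq 1$, i.e. $|a_2/2|\leq 1$, so $|a_2|\leq 2$.

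\medskip

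\textbf{Main obstacle.} The only genuinely delicate point is bookkeeping the branch of the square root and verifying oddness and univalence of $F$ cleanly; none of this is deep, but it is where a careless argument goes wrong. The identity $c_3=a_2/2$ (equivalently $b_1=-a_2/2$ after inversion) is a short Taylor-coefficient computation: expand $(z+c_3z^3+\cdots)^2 = z^2 + 2c_3z^4+\cdots$ and match against $f(z^2)=z^2+a_2z^4+\cdots$. Once $F$ is in hand, applying the Area Theorem is immediate.
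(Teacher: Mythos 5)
Your proposal is correct and follows essentially the same route as the paper: the odd square-root transform $F(z)=\sqrt{f(z^2)}=z+\tfrac{a_2}{2}z^3+\cdots$, the verification of its univalence via oddness, and then the Area Theorem applied to its reciprocal to read off $|a_2/2|\leq 1$. The only superfluous move is the detour through $1/F(1/z)$ on $\{|z|>1\}$ in Step 3: since $F$ is univalent on the disc with its only zero a simple zero at $0$, the function $g(z)=1/F(z)=\frac{1}{z}-\frac{a_2}{2}z+\cdots$ is already univalent on the unit disc as a map into the Riemann sphere, which is exactly the form required by the Area Theorem as stated in the paper (and is what the paper uses).
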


\begin{proof}
Let $h(z)=f(z^2).$ Then 
\bea
h(z) & = & z^2+\sum_{n=2}^\infty a_n z^{2n}\\
& = &z^2(1+\sum_{n=2}^\infty a_n z^{2n-2})\\
& = & \left(z\sqrt{1+\sum_{n=2}^\infty a_n z^{2n-2}}\right)^2\\
& = & z(1+\frac{1}{2} a_2 z^2+\cdots)\\
\eea
 Hence $h(z)$ has a welldefined square root on the unit disc of the form $z+\cdots$
and the square root is an odd function. Suppose that $h(z_1)=h(z_2).$ Then also
$f(z_1^2)=f(z_2^2)$ so $z_1^2=z_2^2.$ This implies that $z_2=\pm z_1.$ Since $h$ is
odd, we get that $z_1=z_2.$ Hence $h$ is 1-1 on the unit disc.
Next, let $g(z)=1/h(z)=\frac{1}{z}\frac{1}{1+\frac{1}{2}a_2 z^2 +\cdots }=
\frac{1}{z}-\frac{a_2}{2} z+\cdots.$
Following the area theorem we get that $|a_2|\leq 2.$
\end{proof}

We can now prove the Koebe 1/4 theorem. Let $f(z)=z+a_2z^2+\cdots.$ Pick a complex number $c$ and assume that $f(z)\neq c$ on the unit disc. Let $g(z)=\frac{cf(z)}{c-f(z)}.$ Then $g$ is holomorphic on the unit disc,
$g(0)=0, g'(0)=1. $ Also if $g(z_1)=g(z_2),$ then $f(z_1)=f(z_2)$ so $g$ is univalent.

\bea
g(z) & = & \frac{cf(z)}{c-f(z)}\\
& = & \frac{cz+ca_2 z^2+\cdots}{c-z-a_2 z^2+\cdots}\\
& = & \frac{1}{c} \frac{cz+ca_2 z^2+\cdots}{1-z/c-a_2/c  z^2+\cdots}\\
& = & \frac{1}{c}(cz+ca_2 z^2 +\cdots)(1+z/c+\cdots)\\
& = & (z+a_2 z^2 +\cdots)(1+z/c+\cdots)\\
& = & z+(a_2+\frac{1}{c})z^2+\cdots\\
\eea

Applying the area theorem to $f$ we get $|a_2|\leq 2$ and applying it to $g(z)$ we get
$|a_2+\frac{1}{c}|\leq 2.$ Hence,
$$
\frac{1}{c} \leq |a_2|+|a_2+\frac{1}{c}|\leq 4.
$$

So $|c|\geq \frac{1}{4}$ as we wanted to prove.
So we have finished the proof of the Koebe 1/4 theorem.\\

\end{document}